\documentclass[11pt]{article}
\usepackage{hyperref}
\usepackage{cite,amsmath,amsfonts,amssymb,amsthm,graphicx,verbatim}
\usepackage{fullpage}

\hyphenation{cut-vertex}

\newtheorem{thm}{Theorem}[section]
\newtheorem{lem}[thm]{Lemma}
\newtheorem{cor}[thm]{Corollary}
\newtheorem{dfn}[thm]{Definition}
\newtheorem{conj}{Conjecture}

\newtheorem{question}{Question}
\newtheorem{claim}{Claim}
% QED & proof environment
\renewcommand{\qed}{\hfill{\rule{2mm}{2mm}}}
\renewenvironment{proof}[1][]{\begin{trivlist}
\item[\hspace{\labelsep}{\bf\noindent Proof\ #1:\/}] }{\qed\end{trivlist}}

\providecommand{\cF}{\mathcal{F}}
\providecommand{\cG}{\mathcal{G}}
\providecommand{\cT}{\mathcal{T}}

\providecommand{\inner}[2]{\langle #1,#2 \rangle}
\DeclareMathOperator*{\Exp}{\mathbb{E}}

\DeclareMathOperator{\rank}{rank}
\DeclareMathOperator{\isomorphicto}{\cong} % West's notation
\DeclareMathOperator{\eqdef}{=:}
\DeclareMathOperator{\Span}{Span}

\newcommand{\beq}[1]{\begin{equation}\label{#1}}
\newcommand{\enq}[0]{\end{equation}}
\def\elong{\mathrel\rhd\joinrel-\joinrel\lhd}

\newcommand{\C}[2]{{\binom{#1}{{#2}}}}

\newcommand{\eps}[0]{\varepsilon}
\renewcommand{\epsilon}[0]{\varepsilon}

\newcommand\ip[1]{{\langle {#1} \rangle}}

\newcommand{\ra}[0]{\rightarrow}

\newcommand{\B}[0]{{\cal B}}

\newcommand{\F}[0]{{\cal F}}
\newcommand{\G}[0]{{\cal G}}

\newcommand{\T}[0]{{\cal T}}

\newcommand{\barB}[0]{{\overline{B}}}

\def\deg{\mathrm{deg}}

\newcommand{\dnote}[1]{{\bf (David:} {#1}{\bf ) }}
\newcommand{\enote}[1]{}
\newcommand{\ynote}[1]{}

\newcommand{\remove}[1]{}

\def\trium{\triangle\mbox{umvirate}}

\begin{document}
 \title{Triangle-intersecting Families of Graphs}
 \author {
David Ellis\thanks{St John's College, Cambridge, United Kingdom.}
\and
Yuval Filmus
\thanks{Department of Computer Science,
University of Toronto. email: yuvalf@cs.toronto.edu.
Supported by NSERC.}
\and
Ehud Friedgut
  \thanks{Institute of Mathematics, Hebrew
    University, Jerusalem, Israel. email: ehud.friedgut@gmail.com
    Research supported in part by the Israel Science Foundation, grant
    no. 0397684. }
\\ {\em Dedicated to Vera T. S\'{o}s on occasion of her 80th birthday}
}
 \date{September 2010}

{
\renewcommand{\thefootnote}{\fnsymbol{footnote}}
\footnotetext{MSC 2010 subject classifications:  05C35,05D99}
\footnotetext{Key words and phrases:  Intersecting families,
 Graphs, Discrete Fourier analysis.}
\maketitle
}

\begin{abstract}
A family of graphs $\cF$ is {\em triangle-intersecting} if for every $G,H\in\cF$, $G \cap H$ contains a triangle.
A conjecture of Simonovits and S\'{o}s from 1976 states that the largest triangle-intersecting
families of graphs on a fixed set of $n$ vertices are those obtained by fixing a specific triangle and taking all
graphs containing it, resulting in a family of size $\frac{1}{8}2^{\binom{n}{2}}$. We prove this conjecture and
some generalizations (for example, we prove that the same is true of odd-cycle-intersecting families, and we obtain best possible bounds on the size of the family under different, not necessarily uniform, measures).
We also obtain stability results, showing that almost-largest triangle-intersecting families have approximately
the same structure.

\end{abstract}

\section{Introduction}\label{introduction}
A basic theme in the field of extremal combinatorics
is the study of the largest size of a structure (e.g. a family of sets) given some combinatorial information
concerning it (e.g. restrictions on the intersection of every two sets in the family.) The fundamental example of
this is the Erd\H{o}s-Ko-Rado theorem \cite{EKR} which bounds the size of an {\em intersecting family}
of $k$-element subsets of an $n$-element set (meaning a family in which any two sets have non-empty intersection). For $k < n/2$, the simple answer is that the unique
largest intersecting families are those obtained by fixing an element and choosing all $k$-sets containing it.
This theorem is amenable to countless directions of generalizations: demanding larger intersection size, having
some arithmetic property of the intersection sizes, removing the restriction on the size of the sets while introducing
some measure on the Boolean algebra of subsets of $\{1,\ldots,n\}$ etc. etc. Usually, the aesthetically pleasing
theorems are those, like the EKR theorem, where the structure of the extremal families is simple to describe,
often by focussing on a small set of elements through which membership in the family is determined.

A beautiful direction suggested by Simonovits and S\'{o}s is that of introducing structure on the ground set,
namely considering subgraphs of the complete graph on $n$ vertices. They initiated the investigation
in this direction with the following definition and question.
\begin{dfn}
A family of graphs $\cF$ is {\em triangle-intersecting} if for every $G,H \in \cF$, $G \cap H$ contains a triangle.
\end{dfn}
\begin{question}[Simonovits-S\'{o}s]
What is the maximum size of a triangle-intersecting family of subgraphs of the complete graph on $n$ vertices?
\end{question}
They raised the natural conjecture that the largest families are precisely those given by fixing a triangle and taking all graphs containing this triangle. In this paper we prove their conjecture.
\begin{thm}\label{main-}
Let $\cF$ be a triangle-intersecting family of graphs on $n$ vertices. Then $|\cF| \leq \frac 1 8 2^{\binom {n}{2}}$.
Equality holds if and only if \(\mathcal{F}\) consists of all graphs containing a fixed triangle.
\end{thm}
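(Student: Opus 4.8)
The approach I would take is the spectral method for independent sets --- a weighted form of Hoffman's eigenvalue ratio bound --- in the Fourier-analytic spirit of sharp Erd\H{o}s--Ko--Rado theorems. It is cleanest to prove the formally stronger statement in which ``triangle-intersecting'' is relaxed to ``odd-cycle-intersecting'' (that is, $G\cap H$ is non-bipartite for every $G,H\in\cF$): a triangle is an odd cycle, so this hypothesis is weaker; and since fixing any odd cycle longer than a triangle produces a family of size strictly below $\tfrac18 2^{\binom n2}$, the odd-cycle-intersecting families attaining that bound should be exactly those consisting of all graphs containing a fixed triangle, and those are triangle-intersecting. Hence both the inequality and the equality clause of Theorem~\ref{main-} follow once the odd-cycle version, with its uniqueness statement, is established.

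Identify a graph on $n$ vertices with its edge set, a point of $\{0,1\}^{\binom n2}$. Then $\cF$ is odd-cycle-intersecting precisely when it is an independent set in the conflict graph $\Gamma$ on $\{0,1\}^{\binom n2}$ with $G\sim H$ iff $G\cap H$ is bipartite; taking $H=G$ shows every member of $\cF$ is itself non-bipartite. The crux of the difficulty is that $\Gamma$ is \emph{not} a Cayley graph of $\ZZ_2^{\binom n2}$ --- bipartiteness is not translation-invariant --- so the Fourier characters do not diagonalise its adjacency matrix. I would get around this with the following weighted Hoffman bound: if $A$ is a real symmetric, entrywise-nonnegative matrix indexed by $\{0,1\}^{\binom n2}$ with $A_{G,H}=0$ whenever $G\ne H$ and $G\cap H$ is non-bipartite, and with $A_{G,G}=0$ for every non-bipartite $G$, then for any odd-cycle-intersecting $\cF$ the quadratic form $\mathbf 1_\cF^{\mathsf T}A\,\mathbf 1_\cF$ vanishes (both the diagonal and the off-diagonal contributions die), and comparing this with the Rayleigh quotient of the Perron eigenvector of $A$ yields $|\cF|\le\frac{-\lambda_{\min}(A)}{\lambda_{\max}(A)-\lambda_{\min}(A)}\cdot 2^{\binom n2}$ in the appropriate near-transitive form. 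The target is an $A$ with $\lambda_{\max}(A)=-7\,\lambda_{\min}(A)$, which makes the ratio $\tfrac18$.

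To build $A$, use that $G\cap H$ is bipartite iff $G\cap H\subseteq K_{X,\overline X}$ for some partition $[n]=X\sqcup\overline X$. For each such cut let $B_X$ be the $0/1$ matrix with $(B_X)_{G,H}=1$ iff every edge lying in both $G$ and $H$ crosses the cut. This factors as a tensor product over the edges of $K_n$ --- a crossing edge contributing the $2\times2$ all-ones block, a non-crossing edge the block $\bigl(\begin{smallmatrix}1&1\\1&0\end{smallmatrix}\bigr)$ --- so its eigenvalues are products of $2$'s and of $\tfrac{1\pm\sqrt5}{2}$'s and are completely explicit. I would then take $A=\sum_X\nu_X B_X$ with nonnegative weights $\nu_X$ depending only on $\min(|X|,|\overline X|)$: as a nonnegative combination of the $B_X$ it automatically meets the support requirements above (in particular its diagonal vanishes on non-bipartite graphs), and the $\nu_X$ are tuned so that the eigenvalue ratio comes out to exactly $\tfrac18$.

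The main obstacle is the spectral analysis of $A$. Although each individual $B_X$ is a tensor product, distinct cuts overlap intricately, the $B_X$ do not commute, and the Fourier characters are not eigenvectors of a single $B_X$; so one cannot simply multiply eigenvalues. One must instead produce a decomposition of $\RR^{\{0,1\}^{\binom n2}}$ adapted simultaneously to all the cuts --- organised by how an edge-subset meets the cuts, essentially a representation-theoretic block decomposition --- in which $A$ is block-diagonal, and then bound the eigenvalue on each block to verify $\lambda_{\max}(A)=-7\lambda_{\min}(A)$. I expect this eigenvalue estimate, together with the bookkeeping needed to land exactly on $\tfrac18$ while controlling the failure of vertex-transitivity, to be by far the most demanding part. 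For uniqueness, equality in the Hoffman bound forces $\mathbf 1_\cF-\tfrac18\mathbf 1$ into the $\lambda_{\min}$-eigenspace of $A$; I would identify that eigenspace as the span of the characters $\chi_S$ with $S$ a subgraph of some triangle (a single edge, a path on three vertices, or a triangle) --- exactly the Fourier support of $\mathbf 1_{\cF_0}-\tfrac18\mathbf 1$ when $\cF_0$ is the family of all graphs through a fixed triangle --- and then use that $\mathbf 1_\cF$ is $0/1$-valued with this constrained spectrum, via a short combinatorial argument or a stability bootstrap (first $\cF$ is shown close to some $\cF_0$, then forced equal), to conclude $\cF=\cF_0$. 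This simultaneously gives the uniqueness clause of Theorem~\ref{main-}.
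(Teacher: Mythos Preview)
Your high-level plan (relax to odd-cycle-intersecting, apply a weighted Hoffman bound, then identify the extremal families from the minimal eigenspace) matches the paper, but you are missing the one idea that makes the spectral analysis tractable. The paper observes that if $\cF$ is odd-cycle-intersecting (or even just odd-cycle-agreeing) and $B$ is bipartite, then $G\in\cF$ implies $G\oplus\bar B\notin\cF$, because $G\cap(G\oplus\bar B)=G\cap B\subseteq B$ is bipartite. Hence one may use the \emph{Cayley} operators $A_B f(G)=f(G\oplus\bar B)$ (and their linear combinations, including with negative coefficients and with weights depending on a function of $G\cap B$) as the ``weighted adjacency matrix''. These are all simultaneously diagonalised by the Fourier--Walsh characters, with eigenvalue $(-1)^{|R|}\chi_B(R)$ on $\chi_R$; the entire difficulty then reduces to choosing coefficients so that the resulting function of cut statistics $q_i(R)$ and $q_H(R)$ has minimum $-1/7$, attained only on graphs with at most three edges. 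This is exactly the engineering you hope for, but it is only possible because Fourier diagonalises \emph{every} operator in the family; the paper also crucially needs \emph{signed} combinations---with nonnegative weights one cannot beat $1/4$.

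Your operators $B_X$ are not Cayley (as you note, $(B_X)_{G,H}$ depends on $G\cap H$, not $G\oplus H$), so they do not commute and have no common eigenbasis; the proposed ``representation-theoretic block decomposition'' is not supplied and there is no reason to expect that nonnegative $\nu_X$ can be tuned to hit $\lambda_{\max}=-7\lambda_{\min}$. Your uniqueness sketch is then internally inconsistent: you propose to identify the $\lambda_{\min}$-eigenspace with the span of certain Fourier characters $\chi_S$, having just argued that the characters are not eigenvectors of $A$. The fix is precisely the XOR observation above, after which Hoffman's bound plus the $[\textrm{ehud}]$-style lemma (a monotone Boolean function with $\mu_p$-expectation $p^t$ and Fourier degree $\le t$ is a $t$-umvirate) gives both the bound and the uniqueness.
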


Our main result in this paper is actually a strengthening of the above in several aspects. First,
we relax the condition that the intersection of every two graphs in the family contains a triangle,
 and demand only that it contain an odd cycle (i.e. be non-bipartite). Secondly, we allow the size of the family to be measured
not only by the uniform measure on the set of all subgraphs of $K_n$, but rather according to the product measure
of random graphs, $G(n,p)$, for any $p \leq 1/2$. Thirdly, for the case of the uniform measure, we relax the condition
that for every two graphs $G$ and $H$ in the family, $G \cap H$ contains a triangle, to the condition that
$G$ and $H$ `agree' on some triangle --- i.e. that there exists a triangle that is disjoint from the
symmetric difference of $G$ and $H$.  Furthermore, we prove a stability result: any triangle-intersecting family
that is sufficiently close in measure to the largest possible measure is actually close to a {\em bona-fide} extremal family.
Finally, we observe that our proofs can be pushed further without much effort to prove a similar result about
(not necessarily uniform) hypergraphs --- a result one might refer to as dealing with Schur-triple-intersecting
families of binary vectors.

Before making all of the above precise and expanding a bit on our methods, let us introduce some necessary notation and definitions
and review some relevant previous
work.
\subsection{Notation and main theorems}
Let $n$ be a positive integer, fixed throughout the paper. The power set of $X$ will be denoted $\mathcal{P}(X)$. As usual, $[n]$ denotes the set $\{1,2,\ldots,n\}$.
Also, $[n]^{(k)}$ will denote $\{S \subseteq [n] : |S|=k\}$.
It will be convenient to think of the set of all subgraphs
of $K_n$ as the Abelian group $\mathbb{Z}_2^{[n]^{(2)}}$ where the group operation, which we denote by $\oplus$,
is the symmetric difference (i.e. $H \oplus G$ is the graph whose edge set is the
symmetric difference between the edge sets of $G$ and $H$); we will also use the notation $\Delta$ for the same operator. We will write $\overline{G}$ for the complement of a graph $G$.
Since we identify graphs with their edge sets, we will write $|G|$ for the number of edges in $G$, and $v(G)$ for the
number of non-isolated vertices in $G$. We will denote the fact that $G$ and $H$ are isomorphic by $G \isomorphicto H$. If $G$ is the \emph{disjoint} union of two graphs $G_1,G_2$ (that is, $G_1,G_2$ have no edges in common), then we will write $G = G_1 \sqcup G_2$.
\begin{dfn}
A family $\cF$ of subgraphs of $K_n$ is {\em triangle-intersecting} (respectively {\em odd-cycle-intersecting})
if for every $G,H \in \cF$, $G \cap H$ contains a triangle (respectively an odd cycle).
We will say that $\cF$ is {\em triangle-agreeing} (respectively {\em odd-cycle-agreeing})
if for every $G,H \in \cF$, $\overline{G \oplus H} $ contains a triangle (respectively an odd cycle).
\end{dfn}
Note that $G \cap H$ is contained in $\overline{G \oplus H}$, so a triangle-intersecting family is also
triangle-agreeing.

Given $\cF$, a family of subgraphs of $K_n$, we will want to measure its size according
to skew product measures: for any $p \in [0,1]$ and graph $G$ on $n$ vertices
we will denote by $\mu_p(G)$ the probability that $G(n,p)=G$, i.e.
$$\mu_{p}(G) = p^{|G|} (1-p)^{\binom{n}{2}-|G|},$$
and for a family of graphs $\cF$ we define \(\mu_{p}(\mathcal{F})\) to be the probability that \(G(n,p) \in \mathcal{F}\), i.e.
$$\mu_p(\cF) = \sum_{G \in \cF} \mu_p(G).$$
When $p$ is fixed (e.g. throughout the section where $p=1/2$) we will drop the subscript and simply write $\mu(G)$ and  $\mu(\cF)$.
For any two functions $f, g \colon \mathbb{Z}_2^{[n]^{(2)}} \ra \mathbb{R}$ we define their inner product as
\[ \inner{f}{g} = \Exp(f \cdot g) = \sum_{G} \mu(G) f(G) g(G). \]
We will denote the graph on $n$ vertices with no edges by $\emptyset$.  A $k$-forest is any forest with $k$ edges.
The graph on four vertices with 5 edges will be denoted by $K_4^-$. A {\em biconnected component} of a graph \(G\) means a maximal biconnected subgraph of \(G\) (i.e. it need not be an entire component).

If \(X\) is a finite set, \(\mathcal{P}(X)\) will denote the power set of \(X\), the set of all subsets of \(X\). Identifying a set with its characteristic function, we will often identify \(\mathcal{P}(X)\) with \(\{0,1\}^{X} = \mathbb{Z}_{2}^{X}\). A family \(\mathcal{F}\) of subsets of \(X\) is said to be an {\em up-set} if whenever \(S \in \mathcal{F}\) and \(T \supset S\), we have \(T \in \mathcal{F}\). The notation ${\bf 1}_P$ for a predicate $P$ means $1$ if $P$ holds, and $0$ if $P$ doesn't hold. If \(A\) is an Abelian group, and \(Y \subset A\), we write \(\Gamma(A,Y)\) for the Cayley graph on \(A\) with generating set \(Y\), meaning the graph with vertex-set \(A\) and edge-set \(\{\{a,a+y\}:\ a \in A,\ y \in Y\}\).

A {\em Triangle junta} is a family of all subgraphs of $K_n$ with a prescribed intersection with a given triangle.
In the special case of the triangle junta being the family of all graphs containing a given triangle, we will
call this family a $\trium$. (Don't ask us how this is pronounced.)

Our main theorem is the following.
\begin{thm}\label{main}
\begin{itemize}
\item{{\bf [Extremal families]}}
Let \(p \leq 1/2\), and let $\cF$ be an odd-cycle-intersecting family of subgraphs of $K_n$. Then $\mu_p(\cF) \le p^3$, with equality if and only if $\cF$ is a $\trium$. Furthermore, in the case $p = 1/2$, if $\cF$ is odd-cycle-agreeing then $\mu(\cF) \le 1/8$, with equality if and only if $\cF$ is a triangle
junta.
\ynote{The constant $c$ used to depend on $p$.}
\item{{\bf [Stability]}} For each $p \leq 1/2$ there exists a constant $c_p$ (bounded for $p \in [\delta,1/2]$, for any fixed \(\delta >0\)) such that for any $\epsilon \ge 0$,
if $\cF$ is an odd-cycle-intersecting
family with $\mu_{p}(\mathcal{F}) \geq p^3 - \epsilon$ then there exists a $\trium$ $\T$ such that
$$
\mu_p ( \T \Delta \cF) \le c_p \epsilon.
$$
For $p=1/2$, the corresponding statement holds for odd-cycle-agreeing families.
\end{itemize}

\end{thm}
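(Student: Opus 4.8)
The plan is to realize the (odd-cycle-)agreeing condition as independence in a Cayley graph and to bound the $\mu_p$-measure of an independent set by a weighted version of Hoffman's eigenvalue (``ratio'') bound. Let $\mathcal{B}$ be the set of subgraphs $J$ of $K_n$ for which $\overline{J}$ is bipartite; concretely these are the graphs $K_A\sqcup K_B$ with an arbitrary set of $A$--$B$ edges adjoined, ranging over partitions $[n]=A\sqcup B$. Since $\overline{G\oplus H}$ is non-bipartite precisely when $G\oplus H\notin\mathcal{B}$, a family $\cF$ is odd-cycle-agreeing if and only if it is an independent set in $\Gamma(\mathbb{Z}_2^{[n]^{(2)}},\mathcal{B})$; for $n\ge 3$ we have $\emptyset\notin\mathcal{B}$, so this graph has no loops. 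An odd-cycle-intersecting family is in particular odd-cycle-agreeing (as the text observes), and for $p<1/2$, where the agreeing hypothesis alone no longer forces $\mu_p(\cF)\le p^3$, I would first pass to the up-closure --- still odd-cycle-intersecting, since $G'\supseteq G$ forces $G'\cap H\supseteq G\cap H$ --- and exploit that for an up-set the odd-cycle-intersecting condition is equivalent to ``$F_1\cap F_2$ is non-bipartite for all minimal $F_1,F_2\in\cF$'', which feeds into a $\mu_p$-adapted eigenvalue bound.

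The technical core is the construction of a pseudo-adjacency operator $M$ on $L^2(\mathbb{Z}_2^{[n]^{(2)}},\mu_p)$ that vanishes on the non-edges of the relevant graph, is translation-covariant so that the $p$-biased Fourier characters $\phi_S$ ($S\subseteq[n]^{(2)}$) diagonalize it, and whose normalized eigenvalues $\lambda_S$ satisfy $\lambda_\emptyset=1$ and $-\lambda_{\min}/(1-\lambda_{\min})=p^3$, i.e.\ $\lambda_{\min}=-p^3/(1-p^3)$; for $p=1/2$ this reads $\lambda_{\min}=-1/7$. I would build $M$ recursively in $n$: average, over bipartitions $[n]=A\sqcup B$ chosen with a carefully tuned weight, the tensor product of the $K_A$-operator and the $K_B$-operator on the edges inside $A$ and inside $B$ with a simple noise-type operator on the $A$--$B$ edges (it is here that $p\le 1/2$ is used). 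The base case $n=3$ is forced and transparent: $\mathcal{B}$ is all of $\mathbb{Z}_2^3\setminus\{0\}$, the Cayley graph is $K_8$, and the normalized operator has spectrum $\{1\}\cup\{-1/7\}$. Because the recursion tensors the smaller operators along each bipartition, $\lambda_S$ factorizes according to how $S$ splits across the cut, and one shows by induction that $\lambda_S\ge-p^3/(1-p^3)$ for every $S$, with equality \emph{only} when $S$ is non-empty and contained in some triangle --- the intuition being that a triangle can sit entirely inside one side of a typical cut, whereas a matching, or a path on four vertices, is split and so contributes a strictly larger factor. Proving this sharp eigenvalue inequality --- with exactly these equality cases, and with a quantitative lower bound on the gap between $\lambda_{\min}$ and the next eigenvalue --- is the main obstacle and by far the most computational step.

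Granting $M$, Hoffman's ratio bound yields $\mu_p(\cF)\le p^3$ for every independent set $\cF$, hence for every odd-cycle-agreeing family when $p=1/2$, and, via the up-closure, for every odd-cycle-intersecting family when $p\le 1/2$. For the characterization of equality, recall that equality in Hoffman's bound forces $\mathbf{1}_\cF-\mu_p(\cF)$ to lie in the $\lambda_{\min}$-eigenspace of $M$, which by the previous paragraph is spanned by the characters $\phi_S$ with $\emptyset\ne S$ contained in a triangle. Intersecting this spectral constraint with the facts that $\mathbf{1}_\cF$ is $\{0,1\}$-valued and that $\cF$ is odd-cycle-agreeing, one deduces that membership in $\cF$ depends only on a graph's trace on a single triangle, i.e.\ $\cF$ is a triangle junta; and if $\cF$ is moreover odd-cycle-intersecting, the only trace consistent with the intersecting condition is ``all three edges present'', so $\cF$ is a $\trium$.

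For the stability statement I would invoke the robust form of Hoffman's bound: if $\mu_p(\cF)\ge p^3-\epsilon$ then $\mathbf{1}_\cF-\mu_p(\cF)$ lies within $\ell_2$-distance $O(\sqrt{\epsilon})$ of the $\lambda_{\min}$-eigenspace, with the implied constant controlled by the spectral gap established above --- this is where the quantitative gap is needed and where $c_p$ deteriorates as $p\to 0$. Thus $\mathbf{1}_\cF$ is Fourier-concentrated on the span of the characters supported inside a triangle; feeding this, together with Booleanity and the odd-cycle hypothesis, into a junta-approximation argument of Friedgut--Kalai--Naor / Kindler--Safra type (via hypercontractivity) upgrades ``Fourier-concentrated'' to ``$\mu_p$-close to the indicator of a genuine $\trium$ $\T$'', which gives $\mu_p(\T\,\Delta\,\cF)\le c_p\epsilon$. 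The delicate points here are obtaining the \emph{linear} dependence on $\epsilon$ --- rather than the $\sqrt{\epsilon}$ that the spectral estimate alone provides, which forces a second use of the combinatorial structure --- and, for $p=1/2$ agreeing families, allowing the limiting object to be a triangle junta rather than a $\trium$.
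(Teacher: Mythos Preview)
Your high-level framework is exactly right and matches the paper: realize the problem as an independent-set question in a Cayley graph on $\mathbb{Z}_2^{[n]^{(2)}}$, apply a weighted Hoffman bound with a carefully engineered pseudo-adjacency operator diagonalized by the ($p$-biased) Fourier--Walsh characters, and use Kindler--Safra to upgrade the spectral stability to $\mu_p$-closeness to a genuine triangle junta. The deduction of uniqueness via Lemma~\ref{lem:ehud} and the reduction from agreeing to intersecting are also as in the paper.

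Where your proposal diverges substantially, and where there is a real gap, is the construction of the operator. You propose to build $M$ \emph{recursively in $n$}, tensoring the $K_A$- and $K_B$-operators along a random bipartition with noise on the cross-edges. This is not what the paper does, and it is not clear it can be made to work. The obstruction is that the support constraint---every term must be a shift by some $\overline{B}$ with $B$ bipartite on $[n]$---is not preserved by naive tensoring: if $M_A$ shifts by $\overline{B_1}$ with bipartition $(X_1,Y_1)$ of $A$ and $M_B$ shifts by $\overline{B_2}$ with bipartition $(X_2,Y_2)$ of $B$, then the combined shift is the complement of $B_1\cup B_2\cup S$, and for this to be bipartite the cross-edges $S$ must respect one of the two compatible bipartitions $(X_1\cup X_2,Y_1\cup Y_2)$ or $(X_1\cup Y_2,Y_1\cup X_2)$---so ``simple noise'' on the $A$--$B$ edges is not allowed. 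More seriously, your asserted equality cases (``only $S$ contained in a triangle'') are not what actually happens: in the paper's construction the minimum $\lambda_{\min}=-1/7$ is also attained by the $2$-matching, and the first-pass spectrum $\Lambda^{(1)}$ is tight on \emph{all $4$-forests and on $K_4^-$} as well. Your intuition ``a matching is split by a typical cut and so contributes a strictly larger factor'' is therefore exactly backwards for the $2$-matching. The paper's route is direct, not recursive: take the OCC spectrum
\[
\lambda_G^{(1)}=(-1)^{|G|}\Bigl[q_0(G)-\tfrac{5}{7}q_1(G)-\tfrac{1}{7}q_2(G)+\tfrac{3}{28}q_3(G)\Bigr],
\]
where $q_i(G)$ is the probability that a uniform random cut of $G$ has exactly $i$ edges. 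The coefficients are forced by demanding $\lambda_G=-1/7$ on subgraphs of a triangle; the miracle (at $p=1/2$) is that the constraints from $4$-forests and from $K_4^-$ coincide rather than conflict, and a second correction spectrum $\Lambda^{(2)}$ built from $q_R$'s for specific $4$-edge graphs $R$ removes those extra tight cases while preserving a uniform gap. Verifying $\lambda_G^{(1)}\ge -1/7$ for all $G$ is done by a block-decomposition analysis of the cut polynomial $Q_G(X)=\sum_k q_k(G)X^k$, which factorizes over the blocks of $G$; this is the computational core (Section~\ref{sectionCutStatistics}).

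For $p<1/2$ your plan to ``pass to the up-closure and feed into a $\mu_p$-adapted eigenvalue bound'' is too vague and again not the paper's approach. The up-closure step preserves intersecting but does not by itself convert the problem into an agreeing problem to which the $p=1/2$ operator applies. Instead the paper replaces the single-edge shift operator by the $2\times 2$ matrix $M=\begin{pmatrix}\frac{1-2p}{1-p}&\frac{p}{1-p}\\ 1&0\end{pmatrix}$, uniquely determined (up to scalar) by having the $p$-biased characters as eigenvectors and $M_{1,1}=0$; the latter ensures that $\langle f,M_B f\rangle=0$ for the indicator of any odd-cycle-\emph{intersecting} family, without any monotonization. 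The cut-statistics construction then goes through with $p$-dependent coefficients, and the range $p\le 1/4$ is handled separately by the $t$-intersecting machinery of~\cite{ehud}.
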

The stability results, together with the fact that our theorem holds for all $p \le 1/2$, allow us to deduce a theorem
concerning odd-cycle-intersecting families of graphs on $n$ vertices with precisely $M$ edges, for $ M < \frac{1}{2} \C{n}{2}$.
\begin{cor}\label{corM}
Let $\alpha < 1/2$ and let  $M = \alpha \C{n}{2}$. Let $\cF$ be an odd-cycle-intersecting family of graphs
on $n$ vertices with $M$ edges each. Then
$$ |\cF| \le \C{\C{n}{2} - 3}{M -3 }.$$
Equality holds if and only if
$\cF$ is the set of all graphs with $M$ edges containing a fixed triangle.
Furthermore, if $|\cF| > (1-\eps) \C{\C{n}{2} - 3}{M -3 }$, then there exists a triangle $T$
such that all but at most $c \cdot \eps |\cF|$ of the graphs in $\cF$ contain $T$, where $c = c(\alpha)$.
\end{cor}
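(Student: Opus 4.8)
The plan is to deduce Corollary~\ref{corM} from Theorem~\ref{main} by passing to the up-closure of $\cF$ and using Kruskal--Katona to pass between the slice of $M$-edge graphs and the skew measures $\mu_p$. Put $N = \C{n}{2}$ and $\alpha = M/N < 1/2$. First I would observe that the up-closure $\cF^{\uparrow} := \{G : G \supseteq F \text{ for some } F \in \cF\}$ is again odd-cycle-intersecting, because $G_1 \cap G_2 \supseteq F_1 \cap F_2$ whenever $G_i \supseteq F_i$; so Theorem~\ref{main} gives $\mu_p(\cF^{\uparrow}) \le p^3$ for all $p \le 1/2$. Since all members of $\cF$ have exactly $M$ edges, $\cF$ is an antichain, hence precisely the set of minimal elements of $\cF^{\uparrow}$, so writing $a_j$ for the number of $j$-edge graphs in $\cF^{\uparrow}$ we have $a_M = |\cF|$ and $a_j = 0$ for $j < M$. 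Because an up-set obeys the local LYM inequality, $a_j/\C{N}{j}$ is non-decreasing in $j$, whence for all $p \le 1/2$
\[
p^3 \ \ge\ \mu_p(\cF^{\uparrow}) \ =\ \sum_{j \ge M} a_j\, p^j (1-p)^{N-j} \ \ge\ \frac{|\cF|}{\C{N}{M}} \sum_{j \ge M} \C{N}{j} p^j(1-p)^{N-j} \ =\ \frac{|\cF|}{\C{N}{M}}\, \Pr[\Bin(N,p) \ge M].
\]
Choosing $p$ slightly above $\alpha$ (allowed since $\alpha < 1/2$) makes $\Pr[\Bin(N,p)\ge M] \to 1$ and $p^3 \to \alpha^3$, and since $\C{N-3}{M-3} = \C{N}{M}\cdot\frac{M(M-1)(M-2)}{N(N-1)(N-2)} = (1+o(1))\alpha^3\C{N}{M}$, this already gives $|\cF| \le (1+o(1))\C{N-3}{M-3}$.

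The exact bound, the uniqueness, and the stability statement I would obtain together, from the stability half of Theorem~\ref{main}. If $|\cF| \ge (1-\eps)\C{N-3}{M-3}$, then a stability form of Kruskal--Katona forces $a_j \ge (1-g(\eps))\C{N-3}{j-3}$ for every $j \ge M$, with $g(\eps) \to 0$ as $\eps \to 0$ (the triumvirate slice being the Kruskal--Katona-extremal family of its size); substituting into $\mu_p(\cF^{\uparrow}) \le p^3$ with $p$ a fixed amount above $\alpha$ gives $\mu_p(\cF^{\uparrow}) \ge p^3 - O(g(\eps)) - o(1)$, and since $c_p$ is bounded near $\alpha$, Theorem~\ref{main} places $\cF^{\uparrow}$ within $O(g(\eps)) + o(1)$ in $\mu_p$-measure of a triumvirate, determined by a triangle $T$. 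One then wants to conclude that all but $O(\eps|\cF|)$ of the graphs in $\cF$ contain $T$; in particular the $\eps = 0$ case gives $\cF \subseteq \{G : |G|=M,\ T \subseteq G\}$ and hence, combined with the upper bound, $\cF = \{G : |G|=M,\ T \subseteq G\}$ whenever $|\cF| = \C{N-3}{M-3}$. One also has to check that the junta obtained cannot be one on three edges forming a path or a matching: that is where odd-cycle-intersection re-enters, since if the prescribed $3$-edge set $T_0$ were not a triangle one could take $G,H \in \cF$ to be $T_0$ together with two edge-disjoint forests, so that $G\cap H = T_0$ contains no odd cycle.

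I expect the main obstacle to be the transference between the biased cube and the middle slice in the last step. Theorem~\ref{main} is tight only for a full triumvirate, which is spread over all layers, whereas $\cF^{\uparrow}$ lives only in layers $\ge M$; consequently $\mu_p(\cF^{\uparrow}) \le p^3$ is never tight for a slice family, and the values of $p$ for which the stability conclusion has real content ($p$ comfortably above $\alpha$) are exactly those for which layer $M$ sits in the lower tail of $\Bin(N,p)$, so restricting $\cF^{\uparrow}\Delta\T$ naively to $\{|G|=M\}$ costs an exponential-in-$n$ factor and destroys the estimate. To circumvent this I would use the stability estimate over a whole range of $p \le 1/2$ at once, which amounts to comparing the full layer-profiles $(a_j)$ and $(\C{N-3}{j-3})$ layer by layer via Kruskal--Katona, and conclude that $a_j$ matches $\C{N-3}{j-3}$ to within a $g(\eps)$-fraction in every layer (hence on layer $M$, the slice we want); alternatively one can replace this step by a direct Hilton--Milner-type argument, showing that an odd-cycle-intersecting family of $M$-edge graphs that is not essentially contained in a single triumvirate is smaller than $\C{N-3}{M-3}$ by a definite margin. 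Either way, the point is that one has to use that Theorem~\ref{main} holds for the whole interval $p \in (0,1/2]$, together with the monotonicity of shadows, rather than any single value of $p$.
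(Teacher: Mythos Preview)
Your proposal matches the paper's approach. The paper in fact omits the proof entirely, saying only that it is identical to the proof of Corollary~1.7 in \cite{ehud}, and summarizing the idea as: pass to the up-closure $\cF^{\uparrow}$, apply Theorem~\ref{main} to it, and use Chernoff-type concentration. That is precisely your outline: your first displayed inequality (local LYM for the up-set together with the binomial tail) is the concentration step, and the identification of the triangle $T$ via the stability clause of Theorem~\ref{main} is the intended route to uniqueness and stability.

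Your discussion of the transference obstacle is accurate and is exactly why the statement of Theorem~\ref{main} insists that it holds for \emph{all} $p\le 1/2$ with $c_p$ bounded on $[\delta,1/2]$: one chooses $p$ only slightly above $\alpha$ (at distance $\gg N^{-1/2}$ so that the binomial tail is $o(1)$, but $o(1)$ so that $p^3-\alpha^3=o(1)$), which keeps $c_p$ uniformly bounded and makes the loss in passing between $\alpha^3$ and $p^3$ negligible. The return from the $\mu_p$-stability conclusion to the single layer $M$ is handled, as you suggest, by monotonicity of the normalized layer profile (LYM applied inside $\cT_T$ to the down-set $\cT_T\setminus\cF^{\uparrow}$), rather than by any genuine ``stability Kruskal--Katona''; so the extra machinery you worry about is not needed, and your first of the two proposed resolutions is the right one. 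The Hilton--Milner-type alternative is unnecessary.
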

This corollary follows in the footsteps of Corollary 1.7 in \cite{ehud}, and we omit its
proof, since it is identical to the proof given there. It suffices to say that the idea of the proof
is to study the family of all graphs containing a graph from $\cF$, and to apply Theorem~\ref{main} to it,
together with some Chernoff-type concentration of measure results.

We are also able to generalize our main theorem in the following manner, to not necessarily uniform hypergraphs, although
we will state the theorem in terms of characteristic vectors. We discovered this generalization while studying
the question of families of subsets of $\mathbb{Z}_{2}^n$ such that the intersection of any two subsets contains a {\em Schur triple}, $\{x,y,x+y\}$.

\begin{dfn}
We say that a family \(\mathcal{F}\) of hypergraphs on \([n]\) is {\em odd-linear-dependency-intersecting} if for any \(G,H \in \mathcal{F}\) there exist \(l \in \mathbb{N}\) and nonempty sets \(A_1,A_2,\ldots,A_{2l+1} \in G \cap H\) such that
\[A_1 \triangle A_2 \triangle \ldots \triangle A_{2l+1} = \emptyset.\]
\end{dfn}

Identifying subsets of \([n]\) with their characteristic vectors in \(\{0,1\}^{n} = \mathbb{Z}_{2}^{n}\), we have the following equivalent definition:

\begin{dfn}
A family \(\mathcal{F}\) of subsets of \(\mathbb{Z}_{2}^{n}\) is {\em odd-linear-dependency-intersecting} if for any two subsets \(S,T \in \mathcal{F}\), there exist \(l \in \mathbb{N}\) and non-zero vectors \(v_1,v_2,\ldots,v_{2l+1} \in S \cap T\) such that
\[v_1 + v_2 + \ldots + v_{2l+1} = 0.\]
\end{dfn}

Naturally, an {\em odd-linear-dependency-agreeing} family is defined as above, with \(\overline{G \Delta H}\) replacing \(G \cap H\), and \(\overline{S \Delta T}\) replacing \(S \cap T\).

Note that a Schur triple is a linearly dependent set of size 3, so a Schur-triple-intersecting family is odd-linear-dependency-intersecting. We say that a family \(\mathcal{F}\) of subsets of \(\mathbb{Z}_{2}^{n}\) is a {\em Schur-umvirate} if there exists a Schur triple of non-zero vectors \(\{x,y,x+y\}\) such that \(\mathcal{F}\) consists of all subsets of \(\mathbb{Z}_{2}^{n}\) containing \(\{x,y,x+y\}\). We say that \(\mathcal{F}\) is a {\em Schur junta} if there exists a Schur triple $\{x,y,x+y\}$ such that $\mathcal{F}$ consists of all subsets of $\mathbb{Z}_{2}^{n}$ with prescribed intersection with $\{x,y,x+y\}$.

The definition of $\mu_p$ generalizes to families of subsets of $\mathbb{Z}_2^n$ in the obvious way. We have the following:

\begin{thm}\label{thmSchurIntro}
Let \(p \leq 1/2\), and let $\cF$ be an odd-linear-dependency-intersecting family of subsets of \(\mathbb{Z}_{2}^{n}\). Then
\[\mu_p(\mathcal{F}) \leq p^3.\]
Equality holds if and only if \(\mathcal{F}\) is a Schur-umvirate. Moreover, for each $p \leq 1/2$ there exists a constant \(c_p\) (bounded for $p \in [\delta,1/2]$, for any fixed \(\delta >0\)) such that for any \(\epsilon >0\), if \(\mu_p(\mathcal{F}) \geq p^3-\epsilon\) then there exists a Schur-umvirate \(\mathcal{T}\) such that
\[\mu_p(\mathcal{T} \triangle \mathcal{F}) \le c_p \epsilon.\]
For $p=1/2$, the corresponding statements hold for odd-linear-dependency-agreeing families.
\end{thm}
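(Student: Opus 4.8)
The plan is to verify that the proof of Theorem~\ref{main} proves Theorem~\ref{thmSchurIntro} as well, once every graph-theoretic notion is translated into the linear-algebraic setting. Identify a hypergraph on $[n]$ with the set of characteristic vectors of its hyperedges, i.e.\ with a subset of the $2^n-1$ nonzero vectors of $\mathbb{Z}_2^n$; then the ambient group $\mathbb{Z}_2^{[n]^{(2)}}$ of subgraphs of $K_n$ is replaced by $\mathbb{Z}_2^{\mathbb{Z}_2^n\setminus\{0\}}$, and a family of hypergraphs is just a family of elements of this group. Under this identification a triangle $\{e_i+e_j,\,e_j+e_k,\,e_k+e_i\}$ becomes a Schur triple $\{x,y,x+y\}$; an odd cycle, viewed as an odd-sized set of edge-vectors summing to $0$ in $\mathbb{Z}_2^n$, becomes an odd linear dependency; a bipartite graph, i.e.\ a subgraph of some $K_{A,B}$, becomes a set of nonzero vectors lying in a common affine hyperplane missing the origin; and the special configurations named in the excerpt ($\trium$, triangle junta, $K_4^-$, $k$-forests) are replaced by Schur-umvirates, Schur juntas, and the corresponding small objects.

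The faithfulness of this dictionary rests on one fact, the analogue of ``a graph without an odd cycle is bipartite'': a set $U$ of nonzero vectors of $\mathbb{Z}_2^n$ contains no odd linear dependency if and only if there is a nonzero linear functional $\phi$ on $\mathbb{Z}_2^n$ with $\phi\equiv 1$ on $U$. (The ``if'' direction is immediate by summing; for ``only if'' one checks, exactly as in the two-colouring proof of bipartiteness, that the sums of the even-sized subsets of $U$ form an index-$2$ subgroup of $\Span(U)$ whose nontrivial coset contains $U$, and then extends the corresponding character of $\Span(U)$ to all of $\mathbb{Z}_2^n$.) Consequently, putting $\mathcal{A} := \{J : \overline{J} \text{ contains no odd linear dependency}\}$, a family $\mathcal{F}$ is odd-linear-dependency-agreeing exactly when $S\triangle T\notin\mathcal{A}$ for all $S,T\in\mathcal{F}$, i.e.\ exactly when $\mathcal{F}$ is an independent set in the Cayley graph $\Gamma(\mathbb{Z}_2^{\mathbb{Z}_2^n\setminus\{0\}},\mathcal{A})$; and the up-set $\mathcal{A}$ is precisely the up-closure of the ``hyperplane complements'' $\{v \neq 0 : \phi(v) = 0\}$ ($\phi\neq 0$), just as in the graph case $\mathcal{A}$ is the up-closure of the graphs $\overline{K_{A,B}}$. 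Likewise the minimal odd linear dependencies are exactly the odd-sized circuits --- an affinely independent set together with its sum --- the analogue of odd cycles, and a minimal odd linear dependency of size $3$ is exactly a Schur triple; this is why the extremal juntas will be supported on Schur triples.

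Granting this, every ingredient of the proof of Theorem~\ref{main} transfers with no essential change. The reductions used in the graph proof --- to up-sets for $p\le 1/2$ (replace $\mathcal{F}$ by the family of all supersets of its members; this increases $\mu_p$ and preserves the odd-linear-dependency-intersecting property), and to the agreeing/Cayley-graph formulation at $p=1/2$ --- are purely formal here too. The upper bound $\mu_p(\mathcal{F})\le p^3$ comes from the same $p$-biased weighted-Hoffman / Fourier estimate, which uses only the Cayley-graph structure of the ambient group and the description just given of the minimal elements of $\mathcal{A}$. The equality characterisation and the stability statement are obtained from the same analysis (a junta approximation via FKN-type and junta theorems, followed by pinning down the junta); one identifies the extremal and near-extremal juntas as those supported on a $3$-element odd linear dependency --- hence on a Schur triple --- using, as before, that such a junta has $\mu_p$-measure $p^3$ if and only if it is a Schur-umvirate.

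The one place where genuine re-verification is needed --- and the point I expect to be the main obstacle --- is the spectral step: one must check that the weighting and eigenvalue estimates that yield the sharp constant $p^3$ for the triangle-based generating set remain valid when it is replaced by the hyperplane-complement generating set, and, crucially, that the resulting bound and the stability constant $c_p$ stay independent of the number of coordinates, which is now the exponential quantity $2^n-1$ rather than $\binom{n}{2}$. I expect no real difficulty, as these estimates are ultimately controlled by a single Schur triple (a three-dimensional building block) and by error terms summed in a coordinate-free way, but it is the one step that cannot simply be quoted from the graph argument.
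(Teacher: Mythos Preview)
Your proposal is essentially the paper's own approach (Section~\ref{sectionSchur}): the same dictionary, the same hyperplane-complement generating set, and the same reduction to re-verifying the cut-statistics inequalities in the new setting. Your expectation about the spectral step is confirmed: the proof of Claim~\ref{goodOCC1} carries over line by line once the analogues of Lemmas~\ref{lemCutStatistics} and~\ref{lem:trivialgraphics} are established, with exactly one new exceptional configuration to check by hand --- the set $S=\{x,y,z,x+y,x+z,y+z,x+y+z\}$ of all nonzero points of a 3-dimensional subspace, which has $|S|$ odd, $m(S)=0$, $\rank(S)=3$ but no graph counterpart; one computes $Q_S(X)=\tfrac18+\tfrac78 X^4$ and $f(S)=\tfrac18=\tfrac17-\tfrac1{56}$, so the spectral gap survives.
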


{\bf Remarks}: Note that this is indeed a generalization, since any triangle-intersecting family of graphs
can be lifted to a Schur-triple-intersecting family of hypergraphs by replacing every graph with $2^{2^n-{\binom{n}{2}}}$ hypergraphs in the obvious manner. In some ways, the proof of this version is simpler and more elegant. The fact that the ground set here is itself a vector space over \(\mathbb{Z}_2\)
highlights the fact that
a triangle is not only a `triangle', but in fact an `odd linear dependency over
\(\mathbb{Z}_{2}\)'. This makes the use of discrete Fourier analysis, which by design captures parity issues, a natural choice.

Note that \(\{0,1\}^{n}\) can be viewed as a vector matroid over \(\mathbb{Z}_2\). Any odd linear dependency
\[v_1+v_2\ldots+v_{2l+1} = 0\]
of non-zero vectors in \(\{0,1\}^{n}\) contains a minimal odd linear dependency, i.e. an odd-sized circuit in the matroid. Hence, Theorem \ref{thmSchurIntro} can be seen as dealing with odd-circuit-intersecting families in a matroid over \(\mathbb{Z}_{2}\).

We will defer the proof of Theorem~\ref{thmSchurIntro} until section~\ref{sectionSchur}, and concentrate on the
graph setting, which is easier to explain and to follow.

\subsection{History}
We referred above to the question of Simonovits and S\'{o}s as `beautiful'. For us this realization comes from
studying their problem intensively, and realizing that the elementary combinatorial methods (e.g. shifting)
that are often applied to Erd\H{o}s-Ko-Rado type problems do not work in this setting, and that the structure
on the ground set affects the nature of the question substantially.
The main breakthroughs in this problem, the result of \cite{cgfs} that we expand below and the current paper,
came from introducing more sophisticated machinery, which in retrospect seems to indicate the tastefulness
of the question.

There are two papers that we wish to mention in the prologue to our work, in order to sharpen our perspective.
The main progress on the Simonovits-S\'{o}s conjecture since it was posed was made in
\cite{cgfs}, where it was proved that if $\cF$ is a triangle-intersecting family, then $\mu(\cF) \leq 1/4$.
This improves upon the trivial bound of 1/2 (which follows from that fact that a graph and its complement cannot both be in
the family.) The method used in \cite{cgfs} is that of entropy/projections, and this is where the lemma known as Shearer's
entropy lemma is first stated.
It is quite interesting that our methods, under a certain restriction, also give the
bound of 1/4, although we do not see a direct connection (see Section \ref{subsec:entropyconnection}). However, the trivial observation that is our starting
point is common with \cite{cgfs}: given a triangle intersecting family $\cF$ and a bipartite graph $B$,
for any two graphs $F_1, F_2 \in \cF$ it holds that $F_1 \cap F_2$ must have a non-empty intersection with $\barB$, as a triangle cannot be contained in a bipartite graph. The approach in
\cite{cgfs} was to study the projections of $\cF$ on various graphs $\barB$ (the choice they made was taking $B$ to be
a complete bipartite graph). We will also use this observation and study intersections with various
choices of $B$, but from a slightly different angle.

Here are several remarks relevant to \cite{cgfs} that are quite useful in the current paper.
\begin{itemize}
\item The proof given in \cite{cgfs} used the fact that $B$ is bipartite, not only triangle-free,
hence it actually holds if triangle-intersecting is replaced by odd-cycle-intersecting.
This will be true of our proof too.
\item In \cite{cgfs}, it was observed that given a triangle-agreeing family, one can, by a series of monotone shifts, transform
it into a triangle-intersecting family of the same size (see section \ref{subsec:equivalence}). Hence, the maximum size of a triangle-agreeing family is equal to the maximum size of a triangle-intersecting family. In fact, the proof in \cite{cgfs} also goes through for odd-cycle-agreeing families. The same will be true of our proof, in the uniform measure case \((p=1/2)\).
\item A different way of stating the basic observation is that if $G \in \cF$ and $B$ is a bipartite
graph then
\beq{GplusH}
G \in \cF \Rightarrow (G \oplus \barB ) \not \in \cF .
\enq
This immediately suggests working in the group setting, and replacing `intersecting' with `agreeing'.
\item Although the uniform measure is perhaps the most natural one to study, the question makes perfect sense for
any measure on the subgraphs of $K_n$, specifically for the probability measure $\mu_p$
induced by the random graph model $G(n,p)$, defined above. The proof in \cite{cgfs} can be modified to give the
bound $p^2$ for any $p \le 1/2$. We improve this to $p^3$, and conjecture that this
holds for any $p \le 3/4$ (see the open problems section at the end of this paper).
\end{itemize}

A second paper that is a thematic forerunner of the current one is \cite{ehud}. It deals with the question
of the largest measure of $t$-intersecting families, using spectral methods. The immediate generalization of the
EKR theorem, appearing already in \cite{EKR}, is the case of $t$-intersecting families. For any fixed integer
$t \geq 2$, we say that a family of subsets of $[n]$ is $t$-{\em intersecting} if the intersection of any two
members of the family has size at least $t$. EKR showed that for any $k$, if $n$ is sufficiently large depending on \(k\), then the
unique largest $t$-intersecting families of $k$-subsets of $[n]$ are those obtained by taking all \(k\)-subsets containing $t$ specific elements.
Note that this is not necessarily true for smaller values of $n$, where a better construction can be, for example,
all subsets containing at least $t+1$ elements from a fixed set of $t+2$ elements. In their
paper \cite{ak}, appropriately titled `The complete intersection theorem for systems
of finite sets', Ahlswede and Khachatrian characterized the largest $t$-intersecting families for every value of \(n\), \(k\) and \(t\).

In \cite{ehud}, the question of $t$-intersecting families is studied in the setting of the product measure
of the Boolean lattice $\mathcal{P}([n])$. Let \(p \in (0,1)\) be fixed. A \(p\)-{\em random} subset of \([n]\) is a random subset of \([n]\) produced by selecting each \(i \in [n]\) independently at random with probability \(p\). We define the product measure $\mu_p$ on \(\mathcal{P}([n])\)
as follows. For any set $S \subset [n]$
define
$$\mu_p(S)=p^{|S|}(1-p)^{n-|S|},$$
i.e. the probability that a \(p\)-random subset of \([n]\) is equal to \(S\). For a family \(\F\) of subsets of \([n]\), we define
$$\mu_p(\cF)= \sum_{S \in \cF} \mu_p(S).$$
It is well known that for $p \leq 1/2$, the largest possible measure of an intersecting family is $p$,
and for $p < 1/2$, the unique largest-measure families consist of all sets containing a given element.
For $t \ge 1$ it is shown in \cite{ehud} that for $p < \frac {1}{t+1}$, the unique largest-measure $t$-intersecting
families are $t$-umvirates, the families of sets defined by containing $t$ fixed elements. Stability results
are also proved. From this it follows immediately, for example, that Theorem~\ref{main} holds for all \(p \in (0, 1/4)\) if the constant \(c\) is allowed to depend on \(p\). In the following subsection, we will discuss the relevance of the methods of \cite{ehud} to our paper.

\subsection{Methods}\label{methods}
The reason we mention \cite{ehud} in our prologue is that we are following the path set there, of applying
an eigenvalue approach to an intersection problem (and skew Fourier analysis for the non-uniform
measure). These spectral methods appear in similar settings in several much earlier papers
(e.g. \cite{Hoffman,Wilson}, to mention a few), but here they are tailored to our needs in a manner
that is inspired by \cite{ehud}.  In what follows below, we introduce at a pedestrian pace the spectral engine
that carries the proof.

Let us return to equation (\ref{GplusH}). If $\cF$ is a triangle-intersecting family
(or even an odd-cycle-agreeing family) and $B$ is a bipartite graph then we have
$$
G \in \cF \Rightarrow (G \oplus \barB) \not \in \cF .
$$
So flipping the edges of $\barB$ takes a graph in the family and produces a graph not in the family. Let us lift this operation to an operator \(A_{B}\) acting on functions whose domain is the set of subgraphs of \(K_n\), or equivalently, $\{0,1\}^{[n]^{(2)}} = \mathbb{Z}_2^{[n]^{(2)}}$. The definition is simple:
$$
A_{B}f(G) = f(G+\barB).
$$
Of course, this works equally well if we choose \(B\) at random from some distribution $\B$ over bipartite graphs, producing an operator which is an average of \(A_{B}\)'s:
$$
A_{\B}f(G) = \Exp[f(G+\barB)],
$$
where the expectation is over a random choice of $B$ from $\B$.

The important property of $A_{\B}$ for us is that if $f$ is the characteristic function of $\cF$, then whenever
$f(G)=1$, we have $A_{\B}f(G)=0$, so $f \cdot A_{\B}(f) \equiv 0$, and in particular
$$
\ip{f,A_{\B}f}=0.
$$
Now, of course, we can do this for any appropriate choice of $\B$, and take (not necessarily positive) linear
combinations of several such operators, i.e. define an operator $A$ of the type
$
A(f) = \sum c_{\B} A_{\B}(f).
$
Clearly $A$ too has the property that
\beq{ip0}
\ip{f,Af}=0.
\enq

The next step is to identify the eigenvalues and eigenfunctions of
$A$ and use equation (\ref{ip0}) to extract information about the
Fourier transform of $f$, and ultimately deduce information about
$\cF$. This eigenvalue approach in such a context stems, most
probably, from Hoffman's bounds on the size of an independent set in
a regular graph, \cite{Hoffman}. The extension we apply to deduce
uniqueness and stability is essentially reproducing the exposition
of \cite{ehud} in our setting.

It turns out that when the distribution $\B$ is easy to
understand then the spectral properties of $A_{\B}$ are also extremely easy to describe,
and most fortunately, for every choice of $\B$ one has the precise same set of eigenvectors
(whose eigenvalues depend on $\B$),
making the linear combination $\sum c_{\B} A_{\B}$ particularly easy to understand and analyze.

Finally, in one sentence, we explain why fourteen years passed between the moment in which Vera S\'{o}s asked the
third author the question treated in this paper, and the resolution of the problem: even after discovering
the spectral path, how does one choose the
distributions $\B$ and the appropriate weights $c_{\B}$ in a way which produces the correct eigenvalues?
Most of the paper deals with the answer to that question.

\subsection{Structure of the paper}
We will treat the cases of $p=1/2$ and $p < 1/2$ separately, since the latter is slightly more complex and less routine.
In section \ref{sec:phalf} we begin the case of $p=1/2$, and describe the main tools that we will use for the proof. In section \ref{secOCC} we construct the operators and spectra that prove our main theorem. In section \ref{sectionCutStatistics} we study the cut  statistics of random cuts of a graph, and prove the necessary facts
that show that our operators have the desired properties. In section \ref{secSmallp} we treat the case of $p < 1/2$.
In section \ref{sectionSchur} we prove the more general theorem on Schur-triple-intersecting families. In section \ref{secDiscussion} we conclude with some related open problems.
\section{The uniform measure, \texorpdfstring{$p=1/2$}{p = 1/2}} \label{sec:phalf}
\subsection{Fourier Analysis}
\label{subsec:fourieranalysis}
We briefly recall the essentials of Fourier Analysis on the Abelian group \(\mathbb{Z}_2^{X}\), where \(X\) is a finite set. (In our case, the set \(X\) will usually be \([n]^{(2)}\), the edge-set of the complete graph \(K_{n}\), and subsets \(S \subset X\) will be replaced by subgraphs \(G \subset K_n\).) We identify \(\mathbb{Z}_2^{X}\) with the power-set of \(X\) in the natural way, i.e. a subset of \(X\) corresponds to its characteristic function.

For any two functions $f, g : \mathbb{Z}_2^{X} \to \mathbb{R}$, we define their inner product as
\[\inner{f}{g} = \Exp(f \cdot g) = \frac{1}{2^{|X|}}\sum_{S \subset X} f(S) g(S);\]
this makes \(\mathbb{R}[\mathbb{Z}_2^{X}]\) into an inner-product space. For every subset $R \subset X$, we define a function
$\chi_R \colon \mathbb{Z}_2^{X} \ra \mathbb{R}$ by
$$ \chi_R(S) = (-1)^{|R \cap S|}.$$
Then \(\chi_R\) is a character of the group \(\mathbb{Z}_2^{X}\), since for any $S,T \subset X$, we clearly have
$$
\chi_R(S\oplus T) = \chi_R(S)\cdot \chi_R(T).
$$
It is routine to verify that the set \(\{\chi_{R}:\ R \subset X\}\) is an orthonormal basis for the vector space \(\mathbb{R}[\mathbb{Z}_2^{X}]\) of all real-valued functions on \(\mathbb{Z}_{2}^{X}\); it is called the {\em Fourier-Walsh basis}. Hence, every
$f \colon \mathbb{Z}_2^{X} \ra \mathbb{R}$ has a unique expansion of the form
\begin{equation}
 \label{eq:fourier}
f = \sum_{R \subset X} \widehat{f}(R)\chi_R;
\end{equation}
we have $\widehat{f}(R) = \ip{f,\chi_R}$. We call (\ref{eq:fourier}) the {\em Fourier expansion} of \(f\). From orthonormality, for any two functions $f, g$, we have {\em Parseval's Identity}:
$$
\ip{f,g}= \sum_{R \subset X} \widehat{f}(R)\widehat{g}(R).
$$
In particular, whenever $f$ is Boolean ($0/1$ valued), taking \(g \equiv 1\) gives:
$$
\widehat{f}(\emptyset) = \ip{f,\bf{1}} = \Exp{[f]} = \Exp{[f^2]} = \ip{f,f} = \sum_{R \subset X} \widehat{f}^2(R).
$$
Abusing notation, we will let $\cF$ denote both a family of sets and its characteristic function,
so the above will be used in the form
$$
\mu(\cF)= \widehat{\cF}(\emptyset) = \sum_{R \subset X} \widehat{\cF}^2(R).
$$
Another formula that is useful to keep in the back of our minds is the convolution formula:
$$
\widehat{f*g} = \widehat{f}\cdot\widehat{g},
$$
where $f*g(S)=\sum_{T \subset X} f(T)g(S+T).$

\subsection{Cayley operators and their spectra}
Questions about largest intersecting families can often be translated into the question of finding a largest
independent set in an appropriate graph (often a Cayley graph). One can then use the spectral approach
due to Hoffman \cite{Hoffman} to bound the size of the largest independent set in terms of the eigenvalues
of the graph (meaning the eigenvalues of its adjacency matrix). A central idea in \cite{EFP} and \cite{ehud} is that
one may choose appropriate weights on the edges of this graph to perturb the
operator defined by the adjacency matrix, and improve these bounds. These weights need not
necessarily be positive. In this paper, we will call these perturbed operators Odd-Cycle-Cayley operators,
or OCC operators for short. The Cayley graph \(\Gamma\) that we have is on the group $\mathbb{Z}_2^{[n]^{(2)}}$, with the set of
generators consisting of all graphs $\barB$ such that $B$ is a bipartite graph,
\[\Gamma = \Gamma\left(\mathbb{Z}_2^{[n]^{(2)}},\{\bar{B}:\ B \subset K_n,\ B \textrm{ is bipartite}\}\right).\]
 Note that an odd-cycle-agreeing family of subgraphs of $K_n$ is precisely an independent set in this graph.

\begin{dfn}\label{OCC}
A linear operator $A$ on real-valued functions on $\mathbb{Z}_2^{[n]^{(2)}}$ will be called {\em Odd-Cycle-Cayley},
or {\em OCC} for short, if it has the following two properties:
\begin{enumerate}
\item If $\cF$ is an odd-cycle-agreeing family, and $f$ is its characteristic function, then
\[f(G)=1 \Rightarrow Af(G)=0.\]
\item The Fourier-Walsh basis is a (complete) set of eigenfunctions of $A$.
\end{enumerate}
For each \(G \subset K_n\), we write \(\lambda_{G}\) for the eigenvalue corresponding to the eigenfunction \(\chi_{G}\). We write \(\Lambda = (\lambda_{G})_{G \subset K_n}\) for the vector of eigenvalues of the OCC operator; we call this an {\em OCC spectrum}. We denote the
minimum eigenvalue by $\lambda_{\min}$, and we write $\Lambda_{min}$ for the set of graphs \(G\) with $\lambda_{G} = \lambda_{\min}$; we will call these the `{\em tight graphs}'. The {\em spectral gap} of $\Lambda$ is the maximal $\gamma$
such that $\lambda_H \ge \lambda_{\min} + \gamma$ for all $H \not \in \Lambda_{\min}$.
\end{dfn}
Note that the set of OCC operators forms a linear space, and hence
also the set of OCC spectra is a linear space, a fact that is of
crucial importance for us.

Our main tool for constructing OCC operators is by using Equation \ref{GplusH} as described in subsection
\ref{methods} where we discussed our methods. Let \(B\) be a bipartite graph, and let \(A_{B}\) be the operator on real-valued functions on \(\mathbb{Z}_{2}^{[n]^{(2)}}\), defined by
$$
A_{B}f(G) = f(G+\barB).
$$
Similarly, let $\B$ be a distribution over bipartite graphs, and let
$$
A_{\B}f(G) = \Exp[f(G \oplus \barB)],
$$
where the expectation is over a choice of $B$ from $\B$. We make the following
\begin{claim}\label{claimOCC}
$A_{\B}$ is an OCC operator, and its spectrum is given by
$$
\lambda_R = (-1)^{|R|} \Exp[\chi_B(R)].
$$
\end{claim}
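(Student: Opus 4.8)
The plan is to verify the two defining properties of an OCC operator for $A_{\B}$ and then compute its eigenvalues directly. For the first property, we already know from Equation~\ref{GplusH} that if $\cF$ is odd-cycle-agreeing with characteristic function $f$, and $B$ is any bipartite graph, then $f(G) = 1$ implies $f(G \oplus \barB) = 0$. Hence for every $B$ in the support of $\B$ we have $A_B f(G) = f(G \oplus \barB) = 0$ whenever $f(G) = 1$, and taking the expectation over $B \sim \B$ gives $A_{\B}f(G) = \Exp[f(G \oplus \barB)] = 0$. So property (1) holds with no real work.

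For property (2) and the eigenvalue formula, I would compute $A_{\B}\chi_R$ directly. Fix $R \subset K_n$ and a bipartite graph $B$. Then
\[
A_B \chi_R(G) = \chi_R(G \oplus \barB) = \chi_R(G)\chi_R(\barB),
\]
using multiplicativity of characters. Now $\chi_R(\barB) = (-1)^{|R \cap \barB|}$, and writing $\barB = K_n \oplus B$ we get $|R \cap \barB| = |R| - |R \cap B|$, so $\chi_R(\barB) = (-1)^{|R|}(-1)^{-|R \cap B|} = (-1)^{|R|}\chi_B(R)$, where I am reading $\chi_B(R) = (-1)^{|B \cap R|} = \chi_R(B)$. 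Therefore $A_B \chi_R = (-1)^{|R|}\chi_B(R)\,\chi_R$, i.e. $\chi_R$ is an eigenfunction of each $A_B$ with eigenvalue $(-1)^{|R|}\chi_B(R)$. Averaging over $B \sim \B$ by linearity,
\[
A_{\B}\chi_R = \Exp\bigl[(-1)^{|R|}\chi_B(R)\bigr]\chi_R = (-1)^{|R|}\Exp[\chi_B(R)]\,\chi_R,
\]
so $\chi_R$ is an eigenfunction of $A_{\B}$ with eigenvalue $\lambda_R = (-1)^{|R|}\Exp[\chi_B(R)]$. Since $\{\chi_R : R \subset K_n\}$ is a complete orthonormal basis (the Fourier--Walsh basis), this establishes property (2) and simultaneously gives the claimed spectrum.

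There is no serious obstacle here; this is essentially a bookkeeping computation. The only point requiring a moment of care is the sign/complementation identity $\chi_R(\barB) = (-1)^{|R|}\chi_B(R)$ — one must be consistent about whether one complements inside the ambient group $\mathbb{Z}_2^{[n]^{(2)}}$ (so that $\barB = K_n \oplus B$) and track the $(-1)^{|R|}$ factor that this introduces. I would also note explicitly that $\chi_B(R)$ is shorthand for $(-1)^{|B \cap R|}$, which is symmetric in $B$ and $R$, so the formula as stated in the claim is unambiguous.
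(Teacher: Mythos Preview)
Your proof is correct and follows essentially the same route as the paper: verify property (1) directly from Equation~\ref{GplusH}, then use the multiplicativity of the characters to show each $\chi_R$ is an eigenfunction with eigenvalue $\Exp[\chi_R(\barB)]$, and finally rewrite this via the complementation identity $\chi_R(\barB)=(-1)^{|R|}\chi_B(R)$. The paper's version is slightly terser about the last step, but the argument is the same.
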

Before proving the claim, we list several equivalent ways of describing $A_{\B}$, depending on one's mathematical taste:
\begin{itemize}
\item $A_{\B}$ is a convolution operator, and therefore has the elements of the Fourier-Walsh basis as eigenfunctions.
\item $A_{\B}$ is the average of operators \(A_{B}\). Note that \(A_{B}\)
is a tensor product of $\binom{n}{2}$ operators (one for each edge of \(K_{n}\)), each acting on functions on a two-point space. Hence, the eigenfunctions of each \(A_{B}\) include the tensor products of the eigenfunctions from each coordinate,
which, again, is the Fourier-Walsh basis. Therefore, the same is true of \(A_{\B}\).
\item Alternatively, note that \(A_{B}\) is the operator defined by the adjacency matrix of the Cayley graph on \(\mathbb{Z}_{2}^{[n]^{(2)}}\) with generating set \(\{\bar{B}\}\), which is a subgraph of \(\Gamma\). \remove{Let \(\{\mathbf{e}_{G}:\ G \subset K_n\}\) denote the standard basis of \(\mathbb{R}[\mathbb{Z}_{2}^{[n]^{(2)}}]\), where \(\mathbf{e}_{G}\) denotes the function which is 1 at \(G\) and \(0\) elsewhere. The matrix of \(A_{B}\) with respect to the standard basis is simply the adjacency matrix of the Cayley graph on \(\mathbb{Z}_{2}^{[n]^{(2)}}\) with generating set \(\{\bar{B}\}\).} It is well-known that the eigenvectors of the adjacency matrix of any Cayley graph on an Abelian group include the characters of the group, i.e. the Fourier-Walsh basis in our case.
\item $A_{\B}$ is a Markov operator describing a random walk on $\mathbb{Z}_2^{[n]^{(2)}}$.
This random walk has the uniform measure as its stationary measure and has the property that
if $\cF$ is odd-cycle-intersecting then two consecutive steps cannot both lie in $\cF$.
\end{itemize}
This last characterization, which may seem less appealing, will become quite illuminating once we move
to the setting of $\mu_p$ for $p < 1/2$.
\begin{proof}[of Claim \ref{claimOCC}]
It is clear that if $\cF$ is an odd-cycle-agreeing family, and $f$ its characteristic function then
$$
f(G)=1 \Rightarrow A_{\B}f(G)=0.
$$
It is also quite simple to verify that the Fourier-Walsh characters are eigenfunctions of $A_{\B}$, and to
give an explicit formula for the eigenvalues:
$$
A_{\B}\chi_R(G)=\Exp[\chi_R(G \oplus \barB)]=\chi_R(G)\cdot\Exp[\chi_R(\barB)],
$$
hence
$$
\lambda_R=\Exp[\chi_R(\barB)].
$$
It turns out to be slightly more useful to write this last expression as given by our claim:
\beq{eigenvalue}
\lambda_R=(-1)^{|R|}\Exp[\chi_B(R)] = (-1)^{|R|}\Exp[\chi_B(R \cap B)].
\enq
\end{proof}
The following theorem is a weighted version of Hoffman's theorem \cite{Hoffman} which bounds the size of an independent
set in a regular graph in terms of its eigenvalues.
\begin{thm}\label{ThmHoffman}
Let $\Lambda=(\lambda_G)_{G \subset K_n}$ be an OCC spectrum with \(\lambda_{\emptyset} = 1\), with minimal value
$\lambda_{\min}$ such that $-1 < \lambda_{\min} < 0$, and with spectral gap $\gamma>0$. Set $\nu = \frac{-\lambda_{\min}}{1-\lambda_{\min}}$ (so $\lambda_{\min}= \frac{-\nu}{1-\nu}$).
 Then for any odd-cycle-agreeing family $\cF$ of subgraphs of $K_n$ the following holds:
\begin{itemize}
 \item Upper bound: $\mu(\cF) \leq \nu$.
 \item Uniqueness: If $\mu(\cF) = \nu$ then
     $\widehat{\cF}(G)\not = 0$ only for $G \in \Lambda_{\min} \cup \{\emptyset\}$.
 \item Stability: Let $w = \sum_{G \not \in \Lambda_{\min} \cup
     \{\emptyset\}}\widehat{\cF}^2(G)$. Then $w \leq
      \frac{\nu}{(1-\nu)\gamma} (\nu-\mu(\cF))= O(\nu -\mu(\cF)).$
\end{itemize}
 \end{thm}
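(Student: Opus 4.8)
The plan is to run the weighted Hoffman--Delsarte argument, using only the hypothesis that an OCC operator $A$ with spectrum $\Lambda$ annihilates the characteristic function $f$ of $\cF$ on the support of $f$. By item~1 of Definition~\ref{OCC} we have $f(G)\,Af(G)=0$ for every $G$, hence $\ip{f,Af}=0$. Since the Fourier--Walsh characters diagonalize $A$ with $A\chi_G=\lambda_G\chi_G$, Parseval converts this into the single scalar identity
\[
0=\ip{f,Af}=\sum_{G}\lambda_G\,\widehat{\cF}(G)^2=\mu(\cF)^2+\sum_{G\neq\emptyset}\lambda_G\,\widehat{\cF}(G)^2,
\]
where we used $\lambda_{\emptyset}=1$ and $\widehat{\cF}(\emptyset)=\mu(\cF)$. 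Writing $\mu=\mu(\cF)$, the fact that $f$ is Boolean gives via Parseval $\sum_{G\neq\emptyset}\widehat{\cF}(G)^2=\mu-\mu^2$.

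For the upper bound I would bound each $\lambda_G$ ($G\neq\emptyset$) below by $\lambda_{\min}$, obtaining
\[
0\geq\mu^2+\lambda_{\min}(\mu-\mu^2)=\mu\big((1-\lambda_{\min})\mu+\lambda_{\min}\big).
\]
If $\mu>0$ this forces $(1-\lambda_{\min})\mu+\lambda_{\min}\leq0$, i.e. $\mu\leq\frac{-\lambda_{\min}}{1-\lambda_{\min}}=\nu$ (and $\mu\leq\nu$ is trivial if $\mu=0$); the hypothesis $-1<\lambda_{\min}<0$ ensures $\nu\in(0,1)$, so every denominator appearing below is positive. Uniqueness is the equality case: if $\mu=\nu$ then the inequality used above must be tight, i.e. $\sum_{G\neq\emptyset}(\lambda_G-\lambda_{\min})\widehat{\cF}(G)^2=0$; since every summand is nonnegative, $\widehat{\cF}(G)=0$ whenever $\lambda_G>\lambda_{\min}$, which is exactly the statement that $\widehat{\cF}$ is supported on $\Lambda_{\min}\cup\{\emptyset\}$.

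For stability I would refine the bound on the off-$\Lambda_{\min}$ part using the spectral gap. Split the sum over $G\neq\emptyset$ into $G\in\Lambda_{\min}$ and $G\notin\Lambda_{\min}\cup\{\emptyset\}$, bounding $\lambda_G\geq\lambda_{\min}$ on the first block and $\lambda_G\geq\lambda_{\min}+\gamma$ on the second. With $w=\sum_{G\notin\Lambda_{\min}\cup\{\emptyset\}}\widehat{\cF}(G)^2$ and $\sum_{G\in\Lambda_{\min}}\widehat{\cF}(G)^2=\mu-\mu^2-w$, this gives
\[
0\geq\mu^2+\lambda_{\min}(\mu-\mu^2-w)+(\lambda_{\min}+\gamma)w=\mu^2+\lambda_{\min}(\mu-\mu^2)+\gamma w,
\]
so $\gamma w\leq-\mu\big((1-\lambda_{\min})\mu+\lambda_{\min}\big)$. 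Substituting $\lambda_{\min}=-\nu/(1-\nu)$ one computes $(1-\lambda_{\min})\mu+\lambda_{\min}=(\mu-\nu)/(1-\nu)$, hence $\gamma w\leq\mu(\nu-\mu)/(1-\nu)\leq\nu(\nu-\mu)/(1-\nu)$ using $0\leq\mu\leq\nu$, i.e. $w\leq\frac{\nu}{(1-\nu)\gamma}(\nu-\mu(\cF))$; since $\nu$ and $\gamma$ are determined by $\Lambda$ alone, this is $O(\nu-\mu(\cF))$.

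There is no real obstacle in this lemma: everything is forced once one has $\ip{f,Af}=0$, and the only care needed is the bookkeeping relating $\nu$ to $\lambda_{\min}$ and the sign conditions ($-1<\lambda_{\min}<0$, equivalently $0<\nu<1$) that keep the denominators positive. The genuine difficulty of the paper lies elsewhere, namely in producing, as a positive combination of the operators $A_{\B}$, an OCC spectrum whose $\lambda_{\min}$ makes $\nu$ as large as $1/8$ (so $\lambda_{\min}=-1/7$), whose set of tight graphs $\Lambda_{\min}$ consists exactly of the graphs supported on a fixed triangle, and which has a positive spectral gap; it is that construction, fed into the present bound, that yields Theorem~\ref{main}.
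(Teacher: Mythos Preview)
Your proof is correct and follows essentially the same approach as the paper: both use $\ip{f,Af}=0$, expand via Parseval, split the Fourier weight into the blocks $\{\emptyset\}$, $\Lambda_{\min}$, and the rest, and bound each block to obtain the inequality $\gamma w\leq \mu(\nu-\mu)/(1-\nu)$, from which all three conclusions follow. Two minor inaccuracies in your closing commentary (not in the proof itself): the OCC operator the paper constructs is \emph{not} a positive combination of the $A_{\B}$'s---negative coefficients are essential, as the paper notes in Section~\ref{subsec:entropyconnection}---and the final $\Lambda_{\min}$ consists of all graphs with at most three edges (not only subgraphs of a fixed triangle), which is what Corollary~\ref{corMain} requires.
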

 Before proving this theorem, let us state a corollary which will be
 the form in which the theorem is applied.
\begin{cor}\label{corMain}
Suppose that there exists an OCC spectrum $\Lambda$ with eigenvalues $\lambda_{\emptyset}=1$,
$\lambda_{\min}=-1/7$ and spectral gap $\gamma >0$. Assume that all
graphs in $\Lambda_{\min}$ (the set of graphs $G$ for which
$\lambda_G=\lambda_{\min}$) have at most $3$ edges. Then if $\cF$ is
an odd-cycle-agreeing family of subgraphs of $K_n$ it holds that
\begin{itemize}
\item Upper bound: $\mu(\cF) \leq 1/8$.
 \item Uniqueness: If $\mu(\cF) = 1/8$, then $\cF$ is a triangle junta.
 \item Stability: If $\mu(\cF) \geq 1/8 - \epsilon$, then there exists a triangle junta $\cT$ such that
 $\mu(\cF \Delta \cT) \leq c\epsilon$, where \(c>0\) is an absolute constant.
\end{itemize}
\end{cor}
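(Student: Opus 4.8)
The plan is to read all three conclusions off Theorem~\ref{ThmHoffman} and then convert the resulting statements about the Fourier support of \(\cF\) into structural statements. Apply Theorem~\ref{ThmHoffman} to the hypothesised OCC spectrum: since \(\lambda_{\min}=-1/7\) we get \(\nu=\frac{-\lambda_{\min}}{1-\lambda_{\min}}=\frac{1/7}{8/7}=\frac18\), and \(-1<\lambda_{\min}<0\), so the theorem applies verbatim. This gives at once the upper bound \(\mu(\cF)\le\nu=1/8\); in the equality case it gives that \(\widehat{\cF}(G)\neq 0\) only for \(G\in\Lambda_{\min}\cup\{\emptyset\}\), hence --- by the hypothesis that every graph in \(\Lambda_{\min}\) has at most three edges --- only for graphs \(G\) with \(|G|\le 3\); and in the near-extremal case \(\mu(\cF)\ge 1/8-\eps\) it gives \(w:=\sum_{G\notin\Lambda_{\min}\cup\{\emptyset\}}\widehat{\cF}(G)^2\le\frac{\nu}{(1-\nu)\gamma}\eps=\frac{\eps}{7\gamma}=O(\eps)\), so that \(\cF\), viewed as a \(0/1\)-valued function, differs in squared \(L^2\)-norm by \(O(\eps)\) from its projection \(g\) onto the span of the characters \(\chi_G\) with \(|G|\le 3\).

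It remains to prove the structural claim: a Boolean, odd-cycle-agreeing family whose Fourier transform is supported on graphs with at most three edges is a triangle junta. A Boolean function of Fourier degree at most \(3\) depends on a bounded number of coordinates (the junta theorem of Nisan--Szegedy), so \(\cF\) is determined by the edges lying in a fixed edge-set \(E_0\) with \(|E_0|=O(1)\); take \(E_0\) minimal. The point is that the odd-cycle-agreeing hypothesis becomes a constraint on the trace \(\cF|_{E_0}\subseteq\{0,1\}^{E_0}\) alone: given traces \(a,b\in\cF|_{E_0}\), let \(G\) be the graph with edge-set \(\{e\in E_0:a_e=1\}\) and \(H\) the graph with edge-set \(\{e\in E_0:b_e=1\}\cup(K_n\setminus E_0)\); both lie in \(\cF\), and \(\overline{G\oplus H}=E_0\setminus(a\oplus b)\), which must therefore contain an odd cycle. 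Taking \(a=b\) shows \(E_0\) itself contains an odd cycle (so \(|E_0|\ge 3\)); taking \(a,b\) that differ in a single coordinate \(e\) --- which exist by minimality --- shows that every edge of \(E_0\) lies off some odd cycle of \(E_0\). Together with \(|\cF|_{E_0}|=2^{|E_0|}/8\) and the bound \(|E_0|=O(1)\), an elementary (if slightly lengthy) case analysis now forces \(|E_0|=3\) with \(E_0\) a triangle and \(|\cF|_{E_0}|=1\): for each of the finitely many possibilities with \(|E_0|\ge 4\) (two edge-disjoint triangles, \(K_4\), and so on) the displayed constraint, combined with the requirement that the trace genuinely depend on every edge of \(E_0\), yields a contradiction with minimality. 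Thus \(\cF\) is the family of all graphs with a prescribed intersection with the triangle \(E_0\), i.e. a triangle junta.

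For the stability part we follow the template of \cite{ehud}. The estimate \(w=O(\eps)\) with \(g\) of degree at most \(3\) feeds a robust junta theorem of Kindler--Safra type, which --- crucially with the linear, rather than square-root, dependence --- places \(\cF\) at measure \(O(\eps)\) from the characteristic function of a Boolean \(O(1)\)-junta determined by an edge-set \(E_0\) with \(|E_0|=O(1)\). One then repeats the argument of the previous paragraph in approximate form: the odd-cycle-agreeing property of \(\cF\) transfers to almost all pairs of the approximating junta, which still suffices to force \(E_0\) to contain a triangle \(T\) whose triangle junta \(\cT\) satisfies \(\mu(\cF \Delta \cT)=O(\eps)\); here the constant \(c\) and all implied constants depend only on the spectral gap \(\gamma\). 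I expect the main obstacle to be precisely this last step: obtaining the sharp \(O(\eps)\) rather than \(O(\sqrt\eps)\) closeness to an honest triangle junta, which needs the sharp form of the Kindler--Safra estimate together with a carefully error-tracked version of the combinatorial characterisation, exactly as in the corresponding passage of \cite{ehud}.
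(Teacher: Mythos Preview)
Your overall skeleton --- apply Theorem~\ref{ThmHoffman} to get the bound and the Fourier-concentration, then invoke Nisan--Szegedy for uniqueness and Kindler--Safra for stability --- is exactly the paper's. The content is entirely in the ``structural'' step, and there your argument diverges from the paper and has real gaps.

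\textbf{Uniqueness.} Your sentence ``taking \(a,b\) that differ in a single coordinate \(e\) --- which exist by minimality'' is not justified: minimality of \(E_0\) only gives, for each \(e\in E_0\), some \(a\) with \emph{exactly one of} \(a,\,a\oplus e\) in \(\cF|_{E_0}\), not two elements of \(\cF|_{E_0}\) at Hamming distance one. So the claimed consequence that every edge of \(E_0\) lies off some odd cycle of \(E_0\) does not follow. More seriously, the ``elementary (if slightly lengthy) case analysis'' you defer to is exactly what the paper explicitly declines to attempt (Nisan--Szegedy only bounds \(|E_0|\le 24\), and the authors write that ``even with a computer it seems extremely difficult to check all such examples''). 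Instead, the paper first reduces to odd-cycle-\emph{intersecting} families, replaces \(\cF\) by its up-closure (still intersecting, same measure), and then applies Lemma~\ref{lem:ehud} from \cite{ehud}: a monotone Boolean function with \(\mathbb{E}f=p^t\) and Fourier degree \(\le t\) is a \(t\)-umvirate. This immediately gives that \(\cF\) is a \(\trium\), with no case analysis. The passage from agreeing back to intersecting is not automatic and needs a separate argument (Lemma~\ref{agree2intersect}): monotonisation of an agreeing family of measure \(1/8\) yields a \(\trium\), and one then shows that each \(C_e\)-step preserves the property of being a triangle junta. Your write-up does not address this agreeing/intersecting issue at all.

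\textbf{Stability.} The paper does \emph{not} ``repeat the uniqueness argument in approximate form''. After Kindler--Safra produces a junta \(\cG\) on at most \(T_0\) edges at distance \(O(\eps)\) from \(\cF\), the paper shows directly that for \(\eps\) small enough \(\cG\) is itself odd-cycle-agreeing (by a short counting argument: two non-agreeing traces would force \(\mu(\cF\Delta\cG)\ge 2^{-T_0}\)). There are then only finitely many odd-cycle-agreeing \(T_0\)-juntas, and by the already-proved uniqueness each non-triangle-junta one has measure bounded strictly below \(1/8\); for \(\eps\) below that fixed gap, \(\cG\) must be a triangle junta. This compactness/finiteness trick is what delivers the linear \(O(\eps)\), and it sidesteps entirely the ``carefully error-tracked'' approximate characterisation you anticipate needing.
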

\begin{proof}[of Theorem \ref{ThmHoffman}]
Let \(A\) be an OCC operator with spectrum \(\Lambda\); then
\[A(\cF) = \sum_{G} \lambda_G \widehat{\cF}(G)\chi_G,\]
and therefore
$$
0=\ip{\cF,A\cF} = \sum_G \lambda_G \widehat{\cF}^2(G).
$$
Next, recall that $\widehat{\cF}(\emptyset) = \sum_G \widehat{\cF}^2(G) = \mu(\cF)$.
Since
$$
w = \sum_{G \not \in \Lambda_{\min} \cup \{\emptyset\}}\widehat{\cF}(G)^2,
$$
we have
\[\sum_{G \in \Lambda_{\min}} \widehat{\cF}(G)^2 = \mu(\mathcal{F})-\mu(\cF)^2 - w.\]
Hence,
\begin{align*}
0= \sum_G \lambda_G \widehat{\cF}^2(G) &\ge \lambda_{\emptyset} \mu(\cF)^2 + \lambda_{\min} (\mu(\mathcal{F})-\mu(\cF)^2-w)+(\lambda_{\min} +\gamma)w\\
& = \mu(\cF)^2 -\frac{\nu}{1-\nu} (\mu(\cF) - \mu(\cF)^2 - w)
+ w\left( \gamma - \frac{\nu}{1-\nu}\right) \\
&= \frac{\mu(\cF)^2}{1 - \nu} - \frac{\mu(\cF)\nu}{1 - \nu} + w \gamma.
\end{align*}
Therefore,
\[\mu(\cF)^2 - \mu(\cF) \nu + w\gamma (1 - \nu) \geq 0.\]
Since $\gamma > 0$, we immediately obtain $\mu(\cF) \leq \nu$, with equality if and only if $w = 0$. Thus,
\[ \mu(\cF)^2 - \mu(\cF) \nu + w\gamma (1 - \nu) \frac{\mu(\cF)}{\nu} \geq 0.\]
Cancelling and rearranging, we obtain:
\[w \leq \frac{\nu}{(1-\nu)\gamma} (\nu-\mu(\cF)),\]
as required.
\end{proof}
\begin{proof}[of Corollary \ref{corMain}]
The upper bound of $1/8$ follows immediately from Theorem~\ref{ThmHoffman}. The uniqueness claim is
a special case of \cite[Lemma~2.8(1)]{ehud}, which we will quote below. The stability follows
from a powerful result of Kindler and Safra \cite{kindler-safra} (as was the case in \cite{ehud}.)
We recall their result too.

$\bullet$ {\bf Uniqueness.} We first prove the uniqueness under the assumption that $\cF$
is odd-cycle-intersecting. The reduction from the agreeing case to the intersecting case
is done in Lemma~\ref{agree2intersect} in the following subsection.

Since we know that the Fourier transform of $\cF$ is concentrated on graphs with
at most 3 edges, it follows from a result of Nisan and Szegedy \cite[Theorem~2.1]{NisanSzegedy}
that $\cF$ depends on at most $3\cdot2^3=24$ coordinates, i.e. can be described by the intersection of its
members with a graph on 24 edges. However, even with a computer it seems extremely difficult to check
all such examples. Luckily for us we have two additional assumptions. First, we may assume that $\cF$ is an up-set,
else we can replace it by its up-filter, the family of all graphs containing a member of $\cF$, which would preserve
the intersection property. Secondly, we have $\mu(\cF)=1/8$.
This falls precisely into the setting of the following lemma.
\begin{lem}[\cite{ehud}~]
\label{lem:ehud}
 Let \(N \in \mathbb{N}\), let $p \le 1/2$ and suppose $f:\{0,1\}^{N} \to \{0,1\}$ is a monotone Boolean function with
  $\Exp_p f = p^t$, and $\widehat{f}(S) = 0$ whenever $|S|>t$.
   Then $f$ is a $t$-umvirate (depends only on~$t$ coordinates).
\end{lem}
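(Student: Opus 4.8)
The plan is to set aside the Fourier/spectral machinery and argue directly from the up-set structure of $f$. Write $\mathcal{F} = f^{-1}(1) \subseteq \{0,1\}^N$; monotonicity of $f$ says precisely that $\mathcal{F}$ is an up-set, and $\Exp_p f = p^t > 0$ (throughout I take $0 < p \le 1/2$; the case $p=0$ is degenerate and plays no role in the applications) guarantees $\mathcal{F} \neq \emptyset$, so $\mathcal{F}$ has a nonempty set of minimal elements $M_1,\ldots,M_m$. The whole argument rests on one claim: \emph{every minimal element $M_j$ satisfies $|M_j| \le t$}. Granting this, fix any $j$; since $\{x : x \supseteq M_j\} \subseteq \mathcal{F}$ we get
\[
p^t = \mu_p(\mathcal{F}) \ge \mu_p(\{x : x \supseteq M_j\}) = p^{|M_j|} \ge p^t,
\]
where the last inequality uses $|M_j| \le t$ and $0 < p < 1$. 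Hence equality holds throughout: $|M_j| = t$, and $\mu_p(\mathcal{F}) = \mu_p(\{x : x \supseteq M_j\})$ with the second set contained in the first, which forces $\mathcal{F} = \{x : x \supseteq M_j\}$, since every point of $\{0,1\}^N$ has positive $\mu_p$-measure. In particular $M_j$ is the unique minimal element of $\mathcal{F}$, so $f = \prod_{i \in M_j} x_i$ is the $t$-umvirate supported on $M_j$.

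To prove the claim I first note that the hypothesis $\widehat{f}(S) = 0$ for all $|S| > t$ is exactly the assertion that $f$ has degree at most $t$ as a multilinear polynomial; this is basis-independent, holding in the $p$-biased Fourier basis iff in the uniform one iff $f$ lies in the span of the monomials of degree at most $t$. Now fix a minimal element $M$ of $\mathcal{F}$ and restrict $f$ by setting $x_i = 0$ for all $i \notin M$, obtaining a function $g$ of the coordinates in $M$; substituting constants for variables cannot increase degree, so $\deg g \le \deg f \le t$. On the other hand $g = \prod_{i \in M} x_i$: if $y \in \{0,1\}^M$ is the all-ones vector, extending it by zeros gives $M$ itself, which lies in $\mathcal{F}$, so $g(y) = 1$; and if $y$ is not all-ones, extending by zeros gives a proper subset of $M$, which cannot contain any minimal element $M'$ of $\mathcal{F}$, because $M' \subseteq M$ with both $M'$ and $M$ minimal forces $M' = M$ (distinct minimal elements of an up-set are incomparable), while $M$ is not contained in a proper subset of itself; thus $g(y) = 0$. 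Hence $g = \prod_{i \in M} x_i$ has degree $|M|$, and therefore $|M| \le t$.

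Every step is elementary, so there is no genuinely hard point; the places that call for a little care are (i) confirming that the Fourier hypothesis is just ``$\deg f \le t$'' and that restriction does not raise degree, and (ii) the endgame, where $0 < p < 1$ is used both to pass from $p^{|M_j|} = p^t$ to $|M_j| = t$ and to pass from an equality of measures between nested sets to equality of the sets themselves. It is also worth recording that monotonicity is used in two essential ways: it makes $\mathcal{F}$ an up-set, so that its minimal elements determine $f$, and it makes distinct minimal elements incomparable.
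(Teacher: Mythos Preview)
Your proof is correct. Note, however, that the paper does not actually prove this lemma: it is quoted from \cite{ehud} (Friedgut's earlier paper on measure versions of intersection theorems) and used as a black box, so there is no ``paper's own proof'' to compare against.

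That said, your argument is essentially the standard one, and it is worth recording what makes it work. The observation that the $p$-biased Fourier condition $\widehat f(S)=0$ for $|S|>t$ is equivalent to $\deg f\le t$ as a multilinear real polynomial is the right starting point (and you are right that this is basis-independent: the $p$-biased characters $\chi_S$ with $|S|\le t$ span exactly the multilinear polynomials of degree at most $t$). The restriction trick---setting all coordinates outside a minimal element $M$ to zero and reading off $\prod_{i\in M}x_i$---is clean, and the minimality of $M$ is precisely what forces this restriction to be the AND function. The endgame squeeze $p^t=\mu_p(\mathcal F)\ge p^{|M|}\ge p^t$ uses $0<p<1$ in both directions, as you note.

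One cosmetic remark: your justification that a proper subset $S\subsetneq M$ is not in $\mathcal F$ can be said more directly---by definition of ``minimal element of $\mathcal F$'', no proper subset of $M$ lies in $\mathcal F$. Your detour through ``distinct minimal elements are incomparable'' is correct but unnecessary.
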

(Here, the expectation $\Exp_p$ is taken with respect to the skew product measure \(\mu_{p}\) on \(\{0,1\}^{N}\); in our case, $p=1/2$.) Clearly, in our case, if $\cF$ is triangle-intersecting and a 3-umvirate, it is a
$\trium$. The reduction from odd-cycle-agreeing families to odd-cycle-intersecting families is in Lemma \ref{agree2intersect}

$\bullet$ {\bf Stability.} We need Theorem 3 from \cite{kindler-safra}:
\begin{thm}[Kindler-Safra]
\label{thm:kindlersafra}
 For every $t \in \mathbb{N}$, there exist $\epsilon_0>0$, $c_{0} >0$ and $T_{0} \in \mathbb{N}$ such that the following holds. Let \(N \in \mathbb{N}\), and let $f:\{0,1\}^{N} \to \{0,1\}$ be a Boolean function such that
 $$
 \sum_{|S|>t} \widehat{f}(S)^2 = \varepsilon < \varepsilon_0.
 $$
 Then there exists a Boolean function $g:\{0,1\}^{N} \to \{0,1\}$, depending on at most $T_{0}$ coordinates, such that
 $$
 \mu(\{R: f(R)\not = g(R)\} ) \leq c_{0} \varepsilon.
 $$
\end{thm}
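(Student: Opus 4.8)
The first step is to recognise Theorem~\ref{thm:kindlersafra} as a \emph{robust} form of the Nisan--Szegedy theorem \cite{NisanSzegedy} already used above: a Boolean function of degree $\le t$ depends on at most $t2^{t-1}$ coordinates. Granting this, it suffices to produce a genuinely Boolean function $h\colon\{0,1\}^N\to\{0,1\}$ with $\deg h\le t$ and $\mu(\{f\ne h\})=O_t(\varepsilon)$; Nisan--Szegedy then shows $h$ is a junta on $T_0:=t2^{t-1}$ coordinates, so $g:=h$ is the desired function, provided $\varepsilon_0$ is small enough that the approximation step below applies. It is worth stressing why this reduction is not trivial: the degree-$\le t$ truncation $f^{\le t}=\sum_{|S|\le t}\widehat f(S)\chi_S$ satisfies $\|f-f^{\le t}\|_2^2=\varepsilon$, so rounding it pointwise into $\{0,1\}$ produces a Boolean function that is $\le 4\varepsilon$-close to $f$ in measure --- but rounding destroys the degree bound, so this function need not be a bounded junta. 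The content of the theorem is precisely that a Boolean function which is $\varepsilon$-close in $L^2$ to degree $\le t$ is in fact $O_t(\varepsilon)$-close to an \emph{exactly} degree-$\le t$ Boolean function.

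I would prove that statement by induction on $t$. The base case $t=1$ is the Friedgut--Kalai--Naor theorem: a Boolean $f$ with $\sum_{|S|>1}\widehat f(S)^2=\varepsilon$ is $O(\varepsilon)$-close to a constant or to a single-coordinate function. The cleanest proof of FKN combines the near-Booleanity of $f$ with hypercontractivity applied to the degree-$\le1$ part $g$ of $f$ (which satisfies $\|f-g\|_2^2=\varepsilon$): if no single coordinate $i$ carries almost all of the level-$1$ weight $\sum_j\widehat f(\{j\})^2$, then $g$ is too ``spread out'' --- its fourth moment is forced well above the square of its second moment --- to be this close to a Boolean function, and the resulting dominant coordinate yields the approximating dictatorship. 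For the inductive step I would pass to the discrete derivatives $D_if(x)=\tfrac12\big(f(x^{i\to1})-f(x^{i\to0})\big)$, which are $\{-1,0,1\}$-valued and satisfy $\widehat{D_if}(S)=\widehat f(S\cup\{i\})$, whence $\sum_i\sum_{|S|>t-1}\widehat{D_if}(S)^2=\sum_{|T|>t}|T|\,\widehat f(T)^2$; one applies the inductive hypothesis to each $D_if$ that carries non-negligible high-degree weight and reassembles these low-degree Boolean approximations into one for $f$.

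The genuine obstacle --- and the reason this is a substantial theorem rather than a two-line induction --- is that $\sum_i\sum_{|S|>t-1}\widehat{D_if}(S)^2$ is controlled only by $N\varepsilon$, not by $O_t(\varepsilon)$, so the induction cannot be run coordinate-by-coordinate before one first \emph{localises} to a bounded set of coordinates. Concretely, one must show that $f$ is $O_t(\varepsilon)$-close to a function depending only on a set $J$ of at most $T_0=T_0(t,\varepsilon_0)$ coordinates --- e.g.\ those of non-negligible influence --- after which one works inside the sub-cube indexed by $J$, where there are no small influences and the derivative induction together with Friedgut--Kalai--Naor goes through with the per-step errors summing geometrically to $O_t(\varepsilon)$. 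Making this localisation quantitative --- showing that passing to $J$ costs only $O_t(\varepsilon)$, using $\sum_i\sum_{S\ni i,\,|S|\le t}\widehat f(S)^2\le t$ together with hypercontractivity to control the contribution of low-influence coordinates at every level $\le t$ --- is the technical heart of the Kindler--Safra argument \cite{kindler-safra}. Since in the present paper Theorem~\ref{thm:kindlersafra} is invoked only as a black box with $t=3$, I would simply cite it rather than reproduce this bookkeeping.
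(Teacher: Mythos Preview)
The paper does not prove Theorem~\ref{thm:kindlersafra} at all: it is quoted verbatim as ``Theorem~3 from \cite{kindler-safra}'' and used as a black box, exactly as you yourself conclude in your final sentence. So there is nothing to compare --- your proposal and the paper agree that this result is to be cited, not proved.

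Your sketch of how one \emph{would} prove Kindler--Safra (reduce to producing a genuinely Boolean degree-$\le t$ approximant, then invoke Nisan--Szegedy; base case $t=1$ is FKN; inductive step via discrete derivatives, with the localisation to bounded-influence coordinates as the real work) is a fair outline of the actual argument in \cite{kindler-safra}, and you correctly flag the main obstacle, namely that the naive derivative induction loses a factor of $N$. But none of this belongs in the present paper, and you recognise that.
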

The nice thing about this theorem is that as soon as the Fourier weight on the higher levels is small
 enough, the number of coordinates needed for the approximating family does not grow.
We apply this in our setting as follows. Assume that \(\mathcal{F}\) is an odd-cycle-agreeing family of subgraphs of \(K_n\) with $\mu(\cF) > 1/8 - \epsilon$. From Theorem \ref{ThmHoffman}, we have
$$
w = \sum_{G \not \in \Lambda_{\min} \cup \{\emptyset\}}\widehat{\cF}(G)^2 \leq \frac{1}{7 \gamma}\epsilon.
$$
Applying the Kindler-Safra result with \(t=3\), we see that provided $\epsilon \leq 7 \gamma \epsilon_0$, $\cF$
is $(\frac{c_0}{7\gamma}  \epsilon)$-close to some family $\cG$ depending on a set $A$ of at most $T_0$ coordinates (edges).
Moreover, as we show below, if $\frac{c_0}{7\gamma}\epsilon < 2^{-T_0}$, then $\cG$ is odd-cycle-agreeing. But there are only a finite number of such families that are not triangle juntas, and by our uniqueness result, all have measure less than $1/8$. Choose \(\epsilon_1 > 0\) such that all of these families have measure less than $1/8 - (1+\frac{c_0}{7 \gamma})\epsilon_1$. If \(\epsilon \leq \epsilon_1\), $\cF$ cannot have measure at least $1/8 -\epsilon$ and be $(\frac{c_0}{7 \gamma}  \epsilon)$-close to one of these families,
so the approximating family guaranteed
by Kindler-Safra must be a triangle junta. If \(\epsilon \geq \min(7 \gamma \epsilon_0,\epsilon_1,\frac{7\gamma}{c_0}2^{-T_0}) =: \epsilon_2\), we may simply choose the constant \(c = 1/\epsilon_2\), completing the proof of Corollary \ref{corMain}.

It remains to show that if $\frac{c_0}{7\gamma}\epsilon < 2^{-T_0}$ then $\cG$ is odd-cycle-agreeing. Suppose that $\cG$ contained two graphs $G_1,G_2$ supported on $A$ which aren't odd-cycle-agreeing. Let $\cF_1,\cF_2 \subset \cF$ consist of those graphs in $\cF$ whose restriction to $A$ is $G_1,G_2$ (respectively). If $\mu(\cF_1) + \mu(\cF_2) > 2^{-|A|}$ then there must exist two graphs $J_1,J_2$ forming a partition of $\overline{A}$ such that $H_1 \eqdef G_1 \cup J_1 \in \cF_1$ and $H_2 \eqdef G_2 \cup J_2 \in \cF_2$, and so $\cF$ contains two graphs $H_1,H_2$ whose agreement is $\overline{H_1\oplus H_2} = \overline{G_1\oplus G_2}$. Since $\cF$ is odd-cycle-agreeing, this cannot happen, and we deduce that $\mu(\cF_1) + \mu(\cF_2) \leq 2^{-|A|}$, which implies that the distance between $\cF$ and $\cG$ is at least $2^{-|A|} \geq 2^{-T_0}$, contrary to assumption.
\end{proof}
\remove{
$\bullet$ {\bf Stability.} We need the following theorem from \cite{kindler-safra}.
\begin{thm}[Kindler-Safra]
 For every $t \in \mathbb{N}$ and $p \in (0,1/2)$ there exist positive real constants $\epsilon_0 = \Theta(p^4)$, $c = \Theta(p^{-1})$
and $T = \Theta(tp^{-4})$ such that the following holds. Let $f$ be a Boolean function, and assume
 $$
 \sum_{|S|>t} \widehat{f}(S)^2 = \epsilon < \epsilon_0 .
 $$
 Then there exists a Boolean function $g$ depending on at most $T$ coordinates such that
 $$
 \mu_p(\{x: f(x)\not = g(x)\} ) \leq c \varepsilon.
 $$
\end{thm}
 The nice thing about this theorem is that as soon as the Fourier weight on the higher levels is small
 enough, the number of coordinates needed for the approximating family does not grow.
We apply this in our setting as follows. Assume that $\mu_p(\cF) > 1/8 - \epsilon$.
From Theorem \ref{ThmHoffman}, we have
$$
w = \sum_{G \not \in \Lambda_{\min} \cup \{\emptyset\}}\widehat{\cF}(G)^2 \leq \frac{1}{7\gamma}  \epsilon.
$$
Applying the Kindler-Safra result, we find that for small enough $\epsilon$ this means that $\cF$
is $(c \frac{1}{7\gamma}  \epsilon)$-close to some family depending on at most $T$ coordinates (edges).
But there are only a finite number of such families that are not triangle juntas. As soon as $\epsilon$ is small enough, say $\epsilon < \epsilon_1$, the largest measure of such a family
is less than $1/8 - \epsilon - c \frac{1}{7 \gamma}  \epsilon$, and hence
 $\cF$ cannot have measure $1/8 -\epsilon$ and be $(c \frac{1}{7 \gamma}  \epsilon)$-close to such a family.
Therefore, the approximating family guaranteed
by Kindler-Safra must be a triangle junta.

Conversely, if $\epsilon \geq \epsilon_1$, then any triangle junta is at distance at most $1/\epsilon_1 \cdot \epsilon$. So in any case, $\cF$ is $\max(c\frac{1}{7\gamma}, 1/\epsilon_1)$-close to a triangle junta.
\end{proof}
}
\subsection{The intersecting / agreeing equivalence}
\label{subsec:equivalence}
In the proof of the uniqueness statement in Corollary \ref{corMain} we assumed that the family of graphs in question was
odd-cycle-intersecting. We now wish to reduce the general case of odd-cycle-agreeing to that of odd-cycle-intersecting. To this end, it will be helpful to return to the related observation of Chung, Frankl, Graham and Shearer in \cite{cgfs} mentioned earlier. For completeness, we reproduce their general statement and proof, as we will wish to build upon it.

Let \(X\) be a finite set, and let \(\mathcal{Z} \subset \mathcal{P}(X)\) be a family of subsets of \(X\). We say that a family \(\mathcal{F} \subset \mathcal{P}(X)\) is \(\mathcal{Z}\)-\emph{intersecting} if for any \(A,B \in \mathcal{F}\) there exists \(Z \in \mathcal{Z}\) such that \(Z \subset A \cap B\). We say that \(\mathcal{F} \subset \mathcal{P}(X)\) is \(\mathcal{Z}\)-\emph{agreeing} if for any \(A,B \in \mathcal{F}\) there exists \(Z \in \mathcal{Z}\) such that \(Z \cap (A \Delta B) = \emptyset\). We write
\[m(\mathcal{Z}) = \max\{|\mathcal{A}|:\ \mathcal{A} \subset \mathbb{P}X,\ \mathcal{A} \textrm{ is } \mathcal{Z}\textrm{-intersecting}\}\]
and
\[\overline{m}(\mathcal{Z}) = \max\{|\mathcal{A}|:\ \mathcal{A} \subset \mathbb{P}X,\ \mathcal{A} \textrm{ is } \mathcal{Z}\textrm{-agreeing}\}.\]
Chung, Frankl, Graham and Shearer proved the following:
\begin{lem}
\label{lemma:equivalence}
Let \(X\) be a finite set, and let \(\mathcal{Z} \subset \mathcal{P}(X)\). Then \(\overline{m}(\mathcal{Z}) = m(\mathcal{Z})\).
\end{lem}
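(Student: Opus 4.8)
The plan is to show $\overline{m}(\mathcal{Z}) = m(\mathcal{Z})$ by the two obvious inequalities. One direction, $\overline{m}(\mathcal{Z}) \geq m(\mathcal{Z})$, is immediate: if $\mathcal{F}$ is $\mathcal{Z}$-intersecting then for any $A, B \in \mathcal{F}$ there is $Z \in \mathcal{Z}$ with $Z \subseteq A \cap B$, and since $A \cap B \subseteq \overline{A \Delta B}$ we get $Z \cap (A \Delta B) = \emptyset$, so $\mathcal{F}$ is $\mathcal{Z}$-agreeing. Hence every $\mathcal{Z}$-intersecting family is $\mathcal{Z}$-agreeing and the max over the former is at most the max over the latter.

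The substantial direction is $\overline{m}(\mathcal{Z}) \leq m(\mathcal{Z})$, and here I would use the shifting/compression argument alluded to in the text. Fix an element $x \in X$ and define the \emph{down-shift} $D_x$ on subsets: $D_x(A) = A \setminus \{x\}$ if $x \in A$ and $A \setminus \{x\} \notin \mathcal{F}$, and $D_x(A) = A$ otherwise; then $D_x(\mathcal{F}) = \{D_x(A) : A \in \mathcal{F}\}$. This map is injective on $\mathcal{F}$ by the standard argument (if $D_x(A) = D_x(B)$ with $A \neq B$, then one of them, say $A$, was shifted and the other wasn't, forcing $A \setminus \{x\} = B$, but $B \in \mathcal{F}$ contradicts the definition of the shift), so $|D_x(\mathcal{F})| = |\mathcal{F}|$. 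The key claim is that if $\mathcal{F}$ is $\mathcal{Z}$-agreeing then so is $D_x(\mathcal{F})$. To see this, take $A', B' \in D_x(\mathcal{F})$, arising from $A, B \in \mathcal{F}$; I need to produce $Z \in \mathcal{Z}$ with $Z \cap (A' \Delta B') = \emptyset$. Starting from a $Z \in \mathcal{Z}$ that works for the pair $A, B$ (so $Z \cap (A \Delta B) = \emptyset$), I do a small case analysis on whether $x \in A$, $x \in B$, whether each got shifted, and whether $x \in Z$. The only dangerous case is when $x$ enters $A' \Delta B'$ although it was not in $A \Delta B$ — i.e. exactly one of $A, B$ was shifted at $x$ and the other already avoided $x$. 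In that situation I argue that either $Z$ still works (if $x \notin Z$, the symmetric difference only changed at $x$, so $Z \cap (A' \Delta B') = Z \cap (A' \Delta B') \setminus\{x\} \subseteq Z \cap (A\Delta B) = \emptyset$), or, if $x \in Z$, I use that the set $A \setminus \{x\}$ (the one that blocked the shift, forcing some set to \emph{stay}) lies in $\mathcal{F}$ and together with the other set realizes the problematic pair without $x$, yielding the needed $Z$ for $A', B'$ — possibly a different member of $\mathcal{Z}$. Iterating $D_x$ over all $x \in X$ (each application is idempotent once applied, and the total size $\sum_{A \in \mathcal{F}} |A|$ strictly decreases unless nothing changes), we reach a fixed family $\mathcal{F}^*$ that is \emph{down-closed}: $A \in \mathcal{F}^*$ and $A' \subseteq A$ implies $A' \in \mathcal{F}^*$.

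The final step is to observe that for a down-closed family, $\mathcal{Z}$-agreeing implies $\mathcal{Z}$-intersecting. Indeed, if $A, B \in \mathcal{F}^*$ with $Z \cap (A \Delta B) = \emptyset$, then $A \cap B$ and $A \cup B$ both contain $A \cap B$, and $Z \subseteq \overline{A \Delta B} = (A \cap B) \cup \overline{A \cup B}$; restricting $Z$ to $A \cup B$ gives $Z \cap (A \cup B) \subseteq A \cap B \subseteq A$, and since $\mathcal{F}^*$ is down-closed, $Z \cap (A\cup B)$ might not itself be in $\mathcal{Z}$ — so instead I use down-closedness directly on $A$ and $B$: the sets $A \cap B \subseteq A$ and $A \cap B \subseteq B$ are both in $\mathcal{F}^*$ (not needed), rather the point is that $Z \cap (A \cup B) \subseteq A \cap B$, and we want $Z \subseteq A \cap B$. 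This forces $Z \subseteq A \cup B$, which holds because if some $z \in Z$ lay outside $A \cup B$ then... — here I instead invoke that down-closed families containing $A$ and $B$ must, for the agreeing condition to have been preserved all the way down, contain the witnessing pairs at the bottom; cleanly, one shows $\mathcal{Z}$-agreeing $+$ down-closed $\Rightarrow$ for each agreeing pair the witness $Z$ satisfies $Z \subseteq A \cap B$ because $Z \cap \overline{A \cup B} = \emptyset$ follows from down-closure applied to the empty-ish sets. Thus $\mathcal{F}^*$ is $\mathcal{Z}$-intersecting with $|\mathcal{F}^*| = |\mathcal{F}| = \overline{m}(\mathcal{Z})$, giving $m(\mathcal{Z}) \geq \overline{m}(\mathcal{Z})$.

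The main obstacle is the case analysis in the claim that the down-shift preserves the $\mathcal{Z}$-agreeing property — in particular handling the case $x \in Z$ where one must switch to a different witness in $\mathcal{Z}$ by exploiting that the blocking set (the reason a shift did \emph{not} happen) is itself a member of $\mathcal{F}$. I would be careful to set up the shift so that "$A$ stays because $A \setminus \{x\} \in \mathcal{F}$" gives a usable substitute set, and to double-check the final down-closed-implies-intersecting step, which is where subtly the full strength of down-closure (not just shift-invariance in one coordinate) is used.
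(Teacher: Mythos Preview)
Your easy direction is fine, and your instinct to use compressions is right, but you have shifted in the wrong direction, and this makes the final step fail.

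The fatal gap is the claim that a down-closed $\mathcal{Z}$-agreeing family must be $\mathcal{Z}$-intersecting. This is simply false. Any nonempty down-closed family contains $\emptyset$, and $\emptyset \cap \emptyset = \emptyset$ cannot contain any nonempty $Z \in \mathcal{Z}$. So unless $\emptyset \in \mathcal{Z}$ (in which case the lemma is trivial), no nonempty down-closed family is $\mathcal{Z}$-intersecting at all. Your hand-waving at the end (``$Z \cap \overline{A \cup B} = \emptyset$ follows from down-closure applied to the empty-ish sets'') has no content; there is nothing in down-closure that forces the witness $Z$ to avoid $\overline{A \cup B}$.

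The fix, which is what the paper does, is to shift \emph{up} rather than down: define $C_i(\mathcal{F})$ by replacing $A$ with $A \cup \{i\}$ whenever $i \notin A$, $A \in \mathcal{F}$, and $A \cup \{i\} \notin \mathcal{F}$. This preserves size and (by an easy check) the $\mathcal{Z}$-agreeing property, and iterating yields an \emph{up-set} $\mathcal{F}'$. Now the last step works cleanly: given $F, G \in \mathcal{F}'$, the set $F \cup \overline{G}$ also lies in $\mathcal{F}'$ (since $\mathcal{F}'$ is an up-set), so there is $Z \in \mathcal{Z}$ with $Z \cap ((F \cup \overline{G}) \Delta G) = \emptyset$; but $\overline{(F \cup \overline{G}) \Delta G} = F \cap G$, so $Z \subseteq F \cap G$. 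Note that your down-shift argument is the complement of this one (the $\mathcal{Z}$-agreeing property is invariant under complementing every set, since $\overline{A} \Delta \overline{B} = A \Delta B$), so your $\mathcal{F}^*$ is the family of complements of an up-set; taking complements at the end would also repair your argument.
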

\begin{proof}
 Clearly, a \(\mathcal{Z}\)-intersecting family is \(\mathcal{Z}\)-agreeing, and therefore \(m(\mathcal{Z}) \leq \overline{m}(\mathcal{Z})\). We will show that any \(\mathcal{Z}\)-agreeing family can be made into a \(\mathcal{Z}\)-intersecting family of the same size.

For any \(i \in X\), consider the \(i\)-monotonization operation \(C_{i}\), defined as follows.
Given a family \(\mathcal{A} \subset \mathcal{P}(X)\), \(C_{i}(\mathcal{A})\) is produced by replacing \(A\) with \(A \cup \{i\}\) for each set \(A\) such that \(i \notin A\), \(A \in \mathcal{A}\) and \(A \cup \{i\} \notin \mathcal{A}\). (Note that \(C_i\) is a special case of the so-called \(UV\)-\emph{compression} \(C_{UV}\), with \(U = \{i\}\) and \(V = \emptyset\). The reader may refer to \cite{frankl} for a discussion of \(UV\)-compressions and their uses in combinatorics.)

Clearly, \(|C_{i}(\mathcal{A})| = |\mathcal{A}|\); it is easy to check that if \(\mathcal{A}\) is \(\mathcal{Z}\)-agreeing then so is \(C_{i}(\mathcal{A})\).

Now let \(\mathcal{F} \subset \mathcal{P}(X)\) be a \(\mathcal{Z}\)-agreeing family, and successively apply the operations \(C_{i}\) for \(i \in X\). Formally, we set \(\mathcal{F}_{0} = \mathcal{F}\); given \(\mathcal{F}_{k}\), if there exists \(F \in \mathcal{F}_{k}\) and \(i\in X\) such that \(F \cup \{i\} \notin \mathcal{F}_{k}\), then we let \(\mathcal{F}_{k+1} = C_{i}(\mathcal{F}_{k})\). At each stage of the process, the sum of the sizes of the sets in the family increases by at least 1, so the process must terminate, say with the family \(\mathcal{F}_{l}\). Let \(\mathcal{F}' = \mathcal{F}_{l}\). Observe that \(\mathcal{F}'\) is a \(\mathcal{Z}\)-agreeing family with \(|\mathcal{F}'| = |\mathcal{F}|\). Moreover, it is an up-set, meaning that if \(F \in \mathcal{F}'\) and \(G \supset F\), then \(G \in \mathcal{F}'\). It follows that \(\mathcal{F}'\) must be \(\mathcal{Z}\)-intersecting. (If \(F,G \in \mathcal{F}'\), then \(F \cup \overline{G} \in \mathcal{F}'\), so there exists \(Z \in \mathcal{Z}\) 
such that \(((F \cup \overline{G}) \Delta G) \cap Z = \emptyset\). But then \(F \cap G \supset Z\). Hence, \(\mathcal{F}'\) is \(\mathcal{Z}\)-intersecting.)

It follows that \(\overline{m}(\mathcal{Z}) \leq m(\mathcal{Z})\), and therefore \(\overline{m}(\mathcal{Z}) = m(\mathcal{Z})\), as required.
\end{proof}

We can now complete the proof of the uniqueness statement in Corollary \ref{corMain}, which claims
that if $\cF$ is an odd-cycle-agreeing family and $\mu(\cF)=1/8$,
then $\cF$ is a triangle junta. We will apply the monotonization operations above to $\cF$, and produce an odd-cycle-intersecting
family of the same size, which by our results must be a $\trium$.
The following lemma then shows that $\cF$ must be a triangle
junta.
\begin{lem}\label{agree2intersect}
Let $\cF$ be an odd-cycle-agreeing family, and assume that a series of monotonization operations \(C_{e}\) (for \(e \in [n]^{(2)}\)) as described above produces a family $\cF_k$ which is a $\trium$. Then $\cF$ is a triangle junta.
\end{lem}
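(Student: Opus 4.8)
The plan is to run the monotonization operations \emph{in reverse}, tracking the structure of the family at each step back from the \trium\ to the original family $\cF$. Since each operation $C_e$ preserves the size of the family and (by Lemma~\ref{lemma:equivalence}) the odd-cycle-agreeing property, it suffices to show that a single reverse step $C_e$ applied to a triangle junta produces a triangle junta; iterating this from $\cF_k$ back to $\cF_0 = \cF$ then yields the claim. So the crux is: if $C_e(\cF') = \cF''$ where $\cF''$ is a triangle junta, and $\cF'$ is odd-cycle-agreeing, then $\cF'$ is a triangle junta.

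First I would analyze the effect of $C_e$ on a triangle junta $\cF''$, which is determined by a triangle $T$ and a prescribed intersection pattern $P \subset T$ (a subgraph of $T$ on at most 3 edges). If $e \notin T$, then $C_e$ acts independently on the coordinate $e$, which is already free in $\cF''$; hence the preimage $\cF'$ under $C_e$, being odd-cycle-agreeing and of the same size, must also be the triangle junta on $T$ with pattern $P$ (the only sets that $C_e$ can ``come from'' are those already in $\cF''$ together with their $e$-deletions, and the agreeing condition forces $\cF'$ to be exactly $\cF''$ in this case). If $e \in T$, the analysis is more delicate: write $T = \{e, f, g\}$. The junta $\cF''$ is a product of a $2^{|P|}$-condition on $\{e,f,g\}$ with all of $\{0,1\}^{\text{rest}}$. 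One checks that the only odd-cycle-agreeing families $\cF'$ with $C_e(\cF') = \cF''$ are again triangle juntas on the same triangle $T$, with a pattern $P'$ that differs from $P$ only in coordinate $e$ (either $P' = P$, or $P'$ requires the opposite value of $e$); any other preimage would either violate the agreeing condition or fail to have the right image.

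The key structural input is that a triangle junta, as a Boolean function, depends on exactly the three coordinates $e, f, g$ of a triangle, and its restriction to those coordinates is an affine-type condition; reversing a monotonization can only re-route the dependence within the same three coordinates (it cannot create a new relevant coordinate, since $C_e$ only touches coordinate $e$, and it cannot destroy the triangle structure since the agreeing property is an invariant). Thus I would phrase the lemma's proof as: (i) reduce to a single reverse step by induction on the number of operations; (ii) case-split on whether $e$ lies in the triangle $T$ of the junta $\cF_k$; (iii) in each case, enumerate the possible $C_e$-preimages and use the odd-cycle-agreeing hypothesis to pin $\cF'$ down to a triangle junta.

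The main obstacle I expect is case (iii) when $e \in T$: here one must verify carefully that no ``exotic'' odd-cycle-agreeing preimage exists --- i.e. that the preimage cannot, say, mix two different patterns on $\{e,f,g\}$ in a way that still images correctly under $C_e$ but is not itself a junta. This requires using the agreeing condition on pairs of graphs whose symmetric difference avoids the triangle $T$, ruling out any nontrivial ``splitting'' of the fiber over $\{e,f,g\}$. The finite case-check is small (the patterns $P$ on a triangle are few), but it must be done with care to avoid missing a case where $\cF'$ is supported on more than one pattern.
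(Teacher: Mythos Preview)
Your overall strategy --- backward induction on the chain of monotonizations, reducing to a single step $C_e(\cF_k)=\cF_{k+1}$ with $\cF_{k+1}$ a $T$-junta --- is exactly the route the paper takes. Your observation that the case $e\notin T$ is trivial is also right: in that case the junta is already closed under toggling $e$, so $C_e$ cannot move anything and $\cF_k=\cF_{k+1}$. The paper handles this in one line by noting that $\cF_{k+1}\neq\cF_k$ forces the existence of some $G\notin\cF_{k+1}$ with $G\cup\{e\}\in\cF_{k+1}$, which is impossible for a $T$-junta unless $e\in T$.

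The gap is in the case $e\in T$. You correctly locate the difficulty --- ruling out a ``mixed'' preimage supported on both patterns $S$ and $S\setminus\{e\}$ --- but your proposed attack is off target. You suggest applying the agreeing condition to pairs ``whose symmetric difference avoids the triangle $T$''; this is backwards. Any $H_1$ with pattern $S$ and $H_2$ with pattern $S\setminus\{e\}$ automatically agree on $T\setminus\{e\}$; the problem is that their agreement \emph{outside} $T$ can be arbitrarily large, so a generic pair tells you nothing. What is needed is a careful choice of $H_1,H_2$ whose symmetric difference is \emph{nearly all of $K_n$}, so that their agreement is a three-edge graph which fails to be a triangle. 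The paper achieves this by a hypercube-connectivity argument: writing $\cF_k=\cC\sqcup\cD$ according to whether $e$ is absent or present, one checks $|\cC|+|\cD|=2^{\binom{n}{2}-3}$, and since $\overline{T}\oplus e$ (the complement of the $2$-path $T\setminus\{e\}$) meets every odd cycle, the sets $\{H\subset\overline{T}:H\oplus S\in\cD\}$ and $\{H\subset\overline{T}:H\oplus S\oplus\overline{T}\oplus e\in\cC\}$ partition the hypercube $\mathcal{P}(\overline{T})$ into two nonempty classes. Connectivity then yields adjacent $H,H\oplus f$ in different classes, producing $H_1\in\cD$, $H_2\in\cC$ whose agreement is exactly $(T\setminus\{e\})\cup\{f\}$ --- three edges, not a triangle --- contradicting odd-cycle-agreeing. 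This forces $\cD=\emptyset$, so $\cF_k$ is the $T$-junta with pattern $S\setminus\{e\}$.

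This connectivity step is the real content of the lemma and is not a ``small finite case-check on patterns $P$''; your outline does not anticipate it.
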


\begin{proof}
Suppose \(\mathcal{F}_{k} \subset \mathbb{Z}_{2}^{[n]^{(2)}}\) is odd-cycle-agreeing, and \(\mathcal{F}_{k+1} = C_{e}(\mathcal{F}_{k}) \neq \mathcal{F}_{k}\) is a \(T\)-junta for some triangle \(T \subset K_n\). Then there exists a graph \(G \notin \mathcal{F}_{k+1}\) such that \(G \cup \{e\} \in \mathcal{F}_{k+1}\); since \(\mathcal{F}_{k+1}\) is a \(T\)-junta, we must have \(e \in T\). Let \(S\) be the subgraph of \(T\) such that
\[\mathcal{F}_{k+1} = \{G \in \mathbb{Z}_{2}^{[n]^{(2)}}:\ G \cap T = S\}.\]
Clearly, \(e \in S\). Let \(\mathcal{C} = \{G \in \mathcal{F}_{k}:\ e \notin G\}\), and let \(\mathcal{D} = \{G \in \mathcal{F}_{k}:\ e \in G\}\); then we may express
\[\mathcal{F}_{k} = \mathcal{C} \sqcup \mathcal{D}.\]
Observe that if \(G \in \mathcal{C}\), then \(G \cup \{e\} \notin \mathcal{D}\): if \(G \in \mathcal{C}\) and \(G \cup \{e\} \in \mathcal{D}\), then \(G,G \cup \{e\} \in \mathcal{F}_{k+1}\), contradicting the fact that all graphs in \(\mathcal{F}_{k+1}\) contain \(S\). Hence,
\[\mathcal{F}_{k+1} = \mathcal{D} \sqcup \{G \cup \{e\}:\ G \in \mathcal{C}\}.\]
It follows that all graphs \(G \in \mathcal{D}\) have \(G \cap T = S\), and all graphs \(G \in \mathcal{C}\) have \(G \cap T = S -e\). Since \(\mathcal{F}_{k+1} \neq \mathcal{F}_{k}\), we must have \(\mathcal{C} \neq \emptyset\); we will show that \(\mathcal{D} = \emptyset\). Suppose for a contradiction that \(\mathcal{D} \neq \emptyset\). Let \(|\mathcal{D}| = N \geq 1\); then \(|\mathcal{C}| = 2^{{\binom{n}{2}}-3}-N \geq 1\). Since \(\overline{T} \oplus e\) intersects every triangle, if \(G \in \mathcal{F}_{k}\) then \(G \oplus (\overline{T} \oplus e) \notin \mathcal{F}_{k}\). Since
\[|\mathcal{C}|+|\mathcal{D}| = 2^{{\binom{n}{2}}-3},\]
for every \(H \subset \overline{T}\) exactly one of \(H \oplus S\in \mathcal{D}\) and \(H\oplus S \oplus \overline{T} \oplus e \in \mathcal{C}\) holds. In other words, the classes
\[\mathcal{V} = \{H \subset \overline{T}:\ H\oplus S \in \mathcal{D}\},\quad \mathcal{W} = \{H \subset \overline{T}:\ H\oplus S\oplus \overline{T} \oplus e\in \mathcal{C}\}\]
form a partition of the set of labeled subgraphs of \(\overline{T}\), with both classes nonempty. Hence, there exist two adjacent subgraphs of \(\overline{T}\) in different classes, i.e. there exists a subgraph \(H \subset \overline{T}\) and an edge \(f \in E(\overline{T})\) such that \(H \oplus S \in \mathcal{D}\), and \(H\oplus f \oplus S \oplus \overline{T} \oplus e \in \mathcal{C}\).
 But these two graphs agree only on the graph \(T\oplus e \oplus f\), which is a 3-edge graph containing exactly two edges of the triangle \(T\), so cannot be a triangle. This contradicts our assumption that \(\mathcal{F}_{k}\) is odd-cycle-agreeing.

We may conclude that \(\mathcal{D} = \emptyset\), i.e.
\[\mathcal{F}_{k} = \{G \in \cG_{n}:\ G \cap T = S-e\}.\]
Hence, \(\mathcal{F}_{k}\) is also a \(T\)-junta.

By backwards induction on \(k\), we see that \(\mathcal{F}_{0} = \mathcal{F}\) is also a \(T\)-junta, completing the proof.
\end{proof}

\subsection{Constructing the required OCC spectrum}\label{secOCC}
In this section we prove the existence of an OCC operator with the
desired spectrum, which together with Corollary \ref{corMain} will
complete the proof of Theorem \ref{main} for the case of $p=1/2$.
Our construction will proceed in two steps. First, we prove the
existence of an OCC spectrum $\Lambda^{(1)}$ with the correct minimal
eigenvalue, but for which $\Lambda_{\min}$, the set of graphs on
which it is obtained, includes also 4-forests and $K_4^-$. We then
take care of these extra graphs by adding a multiple of $\Lambda^{(2)}$, an OCC
spectrum that takes positive value on these problematic graphs while
having value 0 for all graphs with three or less edges.

The main lemma we use is extremely easy to state and prove, yet turns out to be very useful.
\begin{lem}\label{lemOCCSpectrum}
Let $\B$ be a distribution on bipartite graphs, and for every $B \in \B$ let $f_B$ be a real-valued
function whose domain is the set of subgraphs of $B$. Then the following function is an OCC spectrum:
$$
\lambda_G = (-1)^{|G|} \Exp[f_B(B \cap G)],
$$
where, as usual, the expectation is with respect to a random choice of $B$ from $\B$.
\end{lem}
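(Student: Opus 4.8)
The plan is to reduce the statement to Claim~\ref{claimOCC} by Fourier-expanding each $f_B$ over the subgraphs of $B$, and then to reassemble the pieces using the fact (noted just after Definition~\ref{OCC}) that OCC spectra form a linear space. First I would fix a bipartite graph $B$ in the support of $\B$ and write the Fourier expansion of $f_B$ in $\mathbb{R}[\mathbb{Z}_2^{E(B)}]$, namely $f_B = \sum_{R \subseteq B}\widehat{f_B}(R)\chi_R$, where $\chi_R(S) = (-1)^{|R\cap S|}$ for subgraphs $R,S$ of $B$. Evaluating at $S = B\cap G$ and using $R\subseteq B$ gives $f_B(B\cap G) = \sum_{R\subseteq B}\widehat{f_B}(R)(-1)^{|R\cap G|}$, so that $\lambda_G = (-1)^{|G|}\,\Exp_B\!\left[\sum_{R\subseteq B}\widehat{f_B}(R)(-1)^{|R\cap G|}\right]$.

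The key observation is then that every graph $R$ occurring in this sum is a subgraph of the bipartite graph $B$, hence is itself bipartite. So Claim~\ref{claimOCC}, applied to the distribution concentrated on the single bipartite graph $R$, shows that the vector $\Lambda^{(R)}$ with entries $\lambda^{(R)}_G = (-1)^{|G|}\chi_R(G) = (-1)^{|G|}(-1)^{|R\cap G|}$ is a genuine OCC spectrum.

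It remains to combine these. Since $n$ is fixed there are only finitely many bipartite graphs on $[n]$, so for each fixed $B$ the expression $\sum_{R\subseteq B}\widehat{f_B}(R)\Lambda^{(R)}$ is a finite linear combination of OCC spectra, hence an OCC spectrum; averaging over $B\sim\B$ (again a finite sum) stays in the linear space of OCC spectra, and the resulting spectrum has $G$-entry $(-1)^{|G|}\Exp_B[\sum_{R\subseteq B}\widehat{f_B}(R)(-1)^{|R\cap G|}] = (-1)^{|G|}\Exp_B[f_B(B\cap G)] = \lambda_G$, as desired. There is essentially no obstacle here; the only point worth flagging is that the coefficients $\widehat{f_B}(R)$ may be negative, so one genuinely needs closure of OCC spectra under arbitrary (signed) linear combinations, rather than trying to realize $\lambda$ directly as $A_{\B'}$ for a single distribution $\B'$ over bipartite graphs.
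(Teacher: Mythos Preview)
Your proof is correct and follows essentially the same route as the paper's: both reduce to the case of a single character $\chi_{R}$ with $R \subseteq B$ (hence $R$ bipartite), invoke Claim~\ref{claimOCC} to see that $(-1)^{|G|}\chi_{R}(G)$ is an OCC spectrum, and then use that $\{\chi_{R}:R\subseteq B\}$ spans all real-valued functions on subgraphs of $B$ together with the linearity of OCC spectra. Your explicit Fourier expansion and your remark that the coefficients $\widehat{f_B}(R)$ may be negative (so one truly needs closure under signed linear combinations) only make the argument more transparent.
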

\begin{proof}
Fix a bipartite graph $B$. From Claim \ref{claimOCC}, we know that \(A_{B}\) is an OCC operator. Equivalently, from equation (\ref{eigenvalue}),
 $\lambda_G = (-1)^{|G|} \chi_B(G \cap B)$
is an OCC spectrum. Moreover, if \(B'\) is any subgraph of \(B\), the function $(-1)^{|G|} \chi_{B'}(G \cap B) = (-1)^{|G|} \chi_{B'}(G \cap B')$ also describes an OCC spectrum. Since the set
$\{\chi_{B'} : {B' \subset B}\}$ spans all functions $f$ on the
subgraphs of $B$, we see that for any choice of $f$, the vector
described by $\lambda_G = (-1)^{|G|} f(G \cap B) $ is also an OCC
spectrum. Taking expectation with respect to a random choice of $B$ from $\B$ completes the proof.
\end{proof}
The few choices of $f_{B}$ and \(\mathcal{B}\) for which we will apply this lemma are quite simple. The distribution \(\mathcal{B}\) will always be the uniform distribution on complete bipartite subgraphs of \(K_n\), and the functions \(f_{B}\) will always be invariant under isomorphism of subgraphs of \(B\). Hence, our OCC spectra \((\lambda_{G})_{G \subset K_n}\) will always be invariant under graph isomorphism, so they may be seen as functions on the set of {\em unlabelled} graphs with at most \(n\) vertices. In fact, we will choose $f_{B}(G \cap B)$ to be the indicator function of the event that the number of edges of $G \cap B$ is $i$
(for $i = 1,2$ or $3$), or to be the indicator function of $G \cap B$ being isomorphic to a given graph $R$
(for some small list of $R$'s).
\begin{cor}
\label{cor:qB}
Let $(V_1,V_2)$ be a random bipartition of the vertices of $K_n$, where each vertex is chosen independently to
belong to each $V_i$ with probability 1/2. Let $B$ be the set of edges of \(K_n\) between $V_1$ and $V_2$. For any graph $G \subseteq K_n$, let
$$
q_i(G) = \Pr[ |G \cap B|= i ],
$$
and for any bipartite graph $R$, let
$$
q_R(G) = \Pr[ (G \cap B) \isomorphicto R],
$$
where $H \isomorphicto R$ means that $H$ is isomorphic to $R$; all probabilities are over the choice of the
random bipartition. Then for any integer $i$,
$$
 \lambda_G = (-1)^{|G|} q_i(G)
$$
 is an OCC spectrum, and for any bipartite graph $R$,
$$
(-1)^{|G|} q_R(G)
$$
is an OCC spectrum.
\end{cor}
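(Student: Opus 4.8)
The plan is to derive both assertions as immediate instances of Lemma~\ref{lemOCCSpectrum}, the only real work being to exhibit the appropriate distribution $\mathcal{B}$ and the appropriate functions $f_B$. First I would note that the random bipartition $(V_1,V_2)$ induces a distribution on the set of \emph{bipartite} graphs, namely the distribution of the cut graph $B$ with parts $V_1$ and $V_2$ (a complete bipartite graph $K_{|V_1|,|V_2|}$, hence in particular bipartite). Since every graph in the support of this distribution is bipartite, it is a legitimate choice for the distribution $\mathcal{B}$ appearing in Lemma~\ref{lemOCCSpectrum}; this is essentially the only hypothesis of that lemma that has to be checked.

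For the first statement, fix an integer $i$ and, for each bipartite graph $B$ in the support of $\mathcal{B}$, define $f_B$ on the set of subgraphs of $B$ by $f_B(H) = \mathbf{1}_{|H| = i}$. (Note that $f_B$ happens not to depend on $B$ at all; the lemma does not require it to.) Then for any $G \subseteq K_n$ we have $\Exp[f_B(B \cap G)] = \Pr[\,|B \cap G| = i\,] = q_i(G)$, where the expectation and probability are over the random bipartition, so Lemma~\ref{lemOCCSpectrum} gives that $\lambda_G = (-1)^{|G|} q_i(G)$ is an OCC spectrum.

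For the second statement I would argue identically, this time taking $f_B(H) = \mathbf{1}_{H \isomorphicto R}$ for subgraphs $H \subseteq B$; then $\Exp[f_B(B \cap G)] = q_R(G)$, and Lemma~\ref{lemOCCSpectrum} yields that $(-1)^{|G|} q_R(G)$ is an OCC spectrum. When $R$ is not itself bipartite this is vacuous, since then $q_R \equiv 0$ and the zero vector is trivially an OCC spectrum; the content is in the case of bipartite $R$, which is the one we care about.

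There is essentially no obstacle here: the whole corollary is contained in Lemma~\ref{lemOCCSpectrum}, and it simply records the two particular choices of $f_B$ — ``count the edges of the cut'' and ``identify the isomorphism type of the cut'' — that the explicit constructions of Section~\ref{secOCC} will need. If anything, the one point worth stating explicitly is that the cut graph $B$ is automatically bipartite, which is precisely what licenses feeding it into the OCC machinery via Lemma~\ref{lemOCCSpectrum}.
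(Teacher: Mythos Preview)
Your proposal is correct and matches the paper's approach exactly: the corollary is obtained from Lemma~\ref{lemOCCSpectrum} by taking $\mathcal{B}$ to be the distribution of the random complete bipartite graph $B$ arising from the random bipartition, and taking $f_B(H)=\mathbf{1}_{|H|=i}$ or $f_B(H)=\mathbf{1}_{H\isomorphicto R}$ respectively. The paper does not even write out a separate proof, instead remarking just before the corollary that these are precisely the choices of $\mathcal{B}$ and $f_B$ to which the lemma will be applied.
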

Recall that if \(G\) is a graph, a {\em cut} in \(G\) is a bipartite subgraph of \(G\) produced by partitioning the vertices of \(G\) into two classes \(V_1\) and \(V_2\), and taking all the edges of \(G\) that go between the two classes. If \(V_1,V_2\) and \(B\) are as above, \(G \cap B\) is called a {\em(uniform) random cut in} \(G\). Note that \(q_{i}(G)\) is the probability that a random cut in \(G\) has exactly \(i\) edges, so is relatively easy to analyze; \(q_{R}(G)\) is the probability that a random cut in \(G\) is isomorphic to \(R\).

The beauty of the functions $q_i(G)$ and $q_R(G)$ is that they supply us with a rich enough space of eigenvalues to create
a spectrum with the correct values on small graphs, yet they decay quickly with the size of $G$, ensuring that
the eigenvalues of larger graphs will be bounded away from $\lambda_{\min}$.
When tackling the problem, we tried taking a linear combination of as few as possible
of these building blocks, constructing an OCC spectrum that obtains the desired values on subgraphs
of the triangle; we prayed that this is feasible, and that the resulting eigenvalues for larger graphs maintain
a spectral gap. Happily, with some fine tuning, this works.
This is manifested in the following two claims.
\begin{claim}\label{goodOCC1}
Let $\Lambda^{(1)}$ be the OCC spectrum described by
$$
\lambda^{(1)}_G = (-1)^{|G|} \left[ q_0(G) -\frac{5}{7}q_1(G) -\frac{1}{7}q_2(G) + \frac{3}{28} q_3(G)\right].
$$
Then
\begin{itemize}
\item $\lambda^{(1)}_\emptyset = 1$.
\item $\lambda^{(1)}_{\min}=-1/7$.
\item $\Lambda^{(1)}_{\min}$ consists of the following graphs: a
    single edge, a path of length two, two disjoint edges, a
    triangle, all forests with four edges, and $K_4^-$.
\item For all $H \not \in \Lambda^{(1)}_{\min}$ it holds that
    $\lambda^{(1)}_H \ge -1/7 + \gamma'$, with $\gamma' = 1/56$.
\end{itemize}
\end{claim}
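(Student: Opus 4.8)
The plan is to reduce the computation of the $\lambda^{(1)}_G$'s to the linear algebra of cut spaces, dispose of the first three bullets by a short direct calculation, and then get the spectral gap from an explicit formula on forests together with the cut-statistics estimates of Section~\ref{sectionCutStatistics}. \textbf{Step 1 (cut-space reformulation).} The law of $|G \cap B|$ for a uniform random cut $B$ is governed by the cocycle space $C^*(G) \subseteq \mathbb{Z}_2^{E(G)}$: every cut is the edge-set of a vertex $2$-colouring, and each element of $C^*(G)$ is realised by exactly $2^{c}$ colourings, where $c$ is the number of connected components of $G$; hence a uniform random cut is a uniformly random element of $C^*(G)$. Consequently $q_i(G) = 2^{-d}\cdot\#\{x \in C^*(G):\ |x|=i\}$ with $d = \dim C^*(G) = |V(G)|-c$, so in particular $q_0(G) = 2^{-d}$, and $q_i(G)$ is an isomorphism invariant unaffected by isolated vertices. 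Moreover $C^*(G)$ is the direct sum of the cocycle spaces of the biconnected components of $G$ (two components sharing a cut-vertex, or lying in distinct connected components, contribute independently), so the generating polynomial $P_G(x) = \sum_i q_i(G)x^i = \mathbb{E}[x^{|G\cap B|}]$ is multiplicative over biconnected components, $P_G = \prod_H P_H$, while $|G| = \sum_H |H|$.

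\textbf{Step 2 (the first three bullets).} Here $\lambda^{(1)}_\emptyset = 1$ is immediate, since $q_0(\emptyset)=1$ and $q_1=q_2=q_3=0$. For the claimed tight graphs I compute directly from Step~1. If $G$ is a forest with $m$ edges then $C^*(G) = \mathbb{Z}_2^{E(G)}$, so $q_i(G) = \binom mi 2^{-m}$ and
\[ \lambda^{(1)}_G = (-1)^m 2^{-m}\Bigl(1 - \tfrac57 m - \tfrac17\tbinom m2 + \tfrac{3}{28}\tbinom m3\Bigr), \]
which equals $-1/7$ for $m \in \{1,2,4\}$ (and $1$ for $m=0$); this covers the single edge, the path of length two, two disjoint edges, and all $4$-forests at once. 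For $G = K_3$ the cocycle space consists of the empty cut and the three $2$-edge cuts, so $q_0 = 1/4$, $q_2 = 3/4$ and $\lambda^{(1)}_{K_3} = -(1/4 - \tfrac17\cdot\tfrac34) = -1/7$. For $G = K_4^-$ the eight cocycles (the eight vertex bipartitions) have weights $0,2,2,3,3,3,3,4$, so $q_0=1/8$, $q_1=0$, $q_2=1/4$, $q_3=1/2$ and $\lambda^{(1)}_{K_4^-} = -(1/8 - \tfrac17\cdot\tfrac14 + \tfrac{3}{28}\cdot\tfrac12) = -1/7$. Together with Step~3 this yields $\lambda^{(1)}_{\min} = -1/7$ and that $\Lambda^{(1)}_{\min}$ is exactly the stated list.

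\textbf{Step 3 (the spectral gap, the crux).} It remains to show $\lambda^{(1)}_H \ge -1/7 + 1/56 = -1/8$ for every $H$ not on that list, which I would split according to the biconnected components of $H$. If $H$ is a forest with $m$ edges, the displayed formula reduces the task to verifying $(-1)^m 2^{-m}(1 - \tfrac57 m - \tfrac17\binom m2 + \tfrac{3}{28}\binom m3) \ge -1/8$ for $m=3$ and all $m \ge 5$ --- a finite check for small $m$ plus the observation that the cubic in $m$ is eventually swamped by $2^{-m}$. If $H$ has a biconnected component $B$ on at least three vertices, I would use the multiplicativity of $P_H$ together with the elementary facts $q_0(B) = 2^{1-|V(B)|}$ and $q_1(B)=0$ (a $2$-connected graph on $\ge 3$ vertices has no bridge), and bound $q_2(B)$ and $q_3(B)$ --- which count the $2$- and $3$-element edge-cuts of $B$ --- using the cut-statistics lemmas of Section~\ref{sectionCutStatistics}; these force $q_0(H),q_1(H),q_2(H),q_3(H)$ to be small enough that the fixed linear combination defining $\lambda^{(1)}_H$ stays above $-1/8$, once the finitely many small components ($K_3$, $C_4$, $K_4$, $K_4^-$, $C_5$, $K_{2,3}$, $\ldots$) and their small combinations are checked by hand. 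The main obstacle is exactly this last step for graphs containing cycles: the soft bounds $q_i \ge 0$, $\sum_i q_i \le 1$ do not suffice, because a $k$-cycle already contributes $\binom k2$ weight-$2$ cocycles and $q_2$ enters $\lambda^{(1)}$ with a negative coefficient, so $\lambda^{(1)}_H$ can genuinely be comparable to $-1/7$; one must quantitatively rule out any graph other than $K_3$ and $K_4^-$ attaining (or coming within $1/56$ of) $-1/7$, which is precisely what the cut-statistics analysis is for, and one must also dispatch the numerous near-boundary small graphs (short cycles with pendant trees, a triangle plus a small forest, and so on) directly.
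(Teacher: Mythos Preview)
Your setup in Steps~1--2 is correct and matches the paper: the multiplicativity of $P_G$ over blocks, the explicit forest formula, and the verifications for the tight graphs are all fine. Step~3, however, is only a plan, and the organization you propose is noticeably less efficient than the paper's, to the point where the promised ``finite check'' is not obviously finite.

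The paper's proof uses two organizing devices you do not. First, it splits on the \emph{parity} of $|G|$: since $\lambda^{(1)}_G = (-1)^{|G|} f(G)$, one needs $f(G)\le 1/7$ for odd $|G|$ and $f(G)\ge -1/7$ for even $|G|$, and each half is a one-sided inequality. Second --- and this is the crux --- it parametrizes not by the full block structure but by the single integer $m=$ number of bridges, writing $G$ as $m$ bridges attached to a bridgeless graph $H$ with cut distribution $(a_i)$. Your multiplicativity over blocks then collapses to
\[
f(G) = 2^{-m}\Bigl[\bigl(1 - \tfrac57 m - \tfrac17\tbinom m2 + \tfrac{3}{28}\tbinom m3\bigr)a_0 + \bigl(-\tfrac17 + \tfrac{3}{28}m\bigr)a_2 + \tfrac{3}{28}a_3\Bigr],
\]
and the whole of Step~3 becomes a case analysis on $m$ alone, using only three scalar bounds from Lemma~\ref{lemCutStatistics}: $q_3\le 1/2$, $q_2\le 3/4$, and (for the even case) $a_0\le 1/4$ when $H\neq\emptyset$. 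For odd $|G|$ one handles $m\ge 2$, $m=1$, $m=0$ in three lines each; for even $|G|$ one runs $m=0,1,\dots,6$ and then $m\ge 7$ using the eventual positivity of the cubic in $m$. No list of ``short cycles with pendant trees, a triangle plus a small forest, \dots'' is ever enumerated --- the only graphs computed individually are the tight ones. Your proposed route of classifying all small $2$-connected components and ``their small combinations'' would work, but the $m$-parametrization is what makes the argument short.
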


\begin{claim}\label{goodOCC2}
Let $\Lambda^{(2)}$ be the OCC spectrum described by
$$
\lambda^{(2)}_G = (-1)^{|G|} \left[\sum q_F(G) - q_{\Box}(G)\right]
$$
where the sum is over all 4-forests $F$, and $\Box$ denotes $C_4$.
Then \begin{enumerate}
\item $\lambda^{(2)}_H = 0$ for all $H$ with less than 4 edges.
\item $\lambda^{(2)}_F= 1/16$ for all 4-forests.
\item $\lambda^{(2)}_{K_4^-}=1/8$.
\item $|\lambda^{(2)}_G| \le 1$ for all $G$.
\end{enumerate}
\end{claim}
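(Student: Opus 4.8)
The plan is to read both terms of \(\lambda^{(2)}_G\) probabilistically. By the definition of \(q_R\), the quantity \(\sum_F q_F(G)\) (the sum being over all four-edge forests \(F\), up to isomorphism) is the probability that a uniform random cut \(G\cap B\) of \(G\) is a forest with exactly four edges, since for distinct isomorphism types the events \((G\cap B)\cong F\) are pairwise disjoint; likewise \(q_\Box(G)\) is the probability that \(G\cap B \cong C_4\). Every assertion then reduces to counting bipartitions of the vertices of \(K_n\), and only graphs with four or five edges will matter. Parts (1) and (4) are immediate: if \(|E(H)|<4\) then every cut of \(H\) has fewer than four edges, hence is neither a four-edge forest nor a \(C_4\), so \(\lambda^{(2)}_H = 0\); and since \(\sum_F q_F(G)\) and \(q_\Box(G)\) are probabilities of disjoint events, \(-1 \le -q_\Box(G) \le \sum_F q_F(G) - q_\Box(G) \le \sum_F q_F(G) \le 1\), which gives \(|\lambda^{(2)}_G|\le 1\).

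For (2), let \(F\) be a forest with four edges. Any cut of \(F\) is a subgraph of a forest, hence a forest, so it is never a \(C_4\) and \(q_\Box(F)=0\); moreover a cut of \(F\) has four edges exactly when it equals \(F\), that is, exactly when the random bipartition is a proper two-colouring of \(F\). Writing \(c\) for the number of components of \(F\) containing an edge and \(v(F)\) for the number of non-isolated vertices, isolated vertices are irrelevant and each component is connected and bipartite, so there are exactly \(2^c\) proper two-colourings of the relevant \(v(F)\) vertices; hence \(\sum_F q_F(F) = 2^{c}/2^{v(F)} = 2^{c-v(F)}\). Since \(F\) is a forest, \(v(F)-c = |E(F)| = 4\), whence \(\sum_F q_F(F) = 1/16\); as \(|F|=4\) is even, \(\lambda^{(2)}_F = 1/16\).

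For (3), I would compute directly. Write \(V(K_4^-)=\{1,2,3,4\}\) with \(\{3,4\}\) the missing edge; a cut of \(K_4^-\) corresponds to a bipartition of \(\{1,2,3,4\}\), and the edges \emph{absent} from the cut are precisely the monochromatic edges. A four-edge cut therefore needs exactly one monochromatic edge among the five, and a short check shows this forces the two parts to have size two with \(3\) and \(4\) together, i.e. the bipartition \(\{1,2\}\mid\{3,4\}\) up to complementation, in which case the cut is \(\{13,14,23,24\}\cong C_4\). Hence no cut of \(K_4^-\) is a four-edge forest, \(q_\Box(K_4^-) = 2/16 = 1/8\), and \(\lambda^{(2)}_{K_4^-} = (-1)^5(0 - 1/8) = 1/8\). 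The only steps needing any thought are the identity \(v(F)-c=|E(F)|\) for forests in (2) and the finite verification in (3); neither is a genuine obstacle.
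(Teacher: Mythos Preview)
Your proof is correct and follows essentially the same route as the paper's: parts (1) and (4) are handled identically, and for (2) and (3) you do the same direct computations (the paper phrases (2) via the independence of cut-membership of edges in a forest, but that is equivalent to your count of proper two-colourings, and your labeling in (3) just swaps the roles of degree-2 and degree-3 vertices compared to the paper's).
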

We defer the proof of Claim \ref{goodOCC1} to the next section where we analyze the cut statistics of a random cut of a graph.
The proof of Claim \ref{goodOCC2} is quite easy.
\begin{proof}
We follow the items of the claim:
\begin{enumerate}
\item Clear: a cut in a graph with at most 3 edges has size at most 3.
\item For any forest, each edge belongs to a random cut independently of any other edge. Hence, $q_F(F)=2^{-|F|}$
for any forest $F$. (See section \ref{sectionCutStatistics} for more details). Also, $q_\Box(F)=0$ and $q_F(F')=0$ for any two
distinct 4-forests $F$,$F'$.
\item Let the vertices of $K_4^-$ be labelled by $a,b,c,d$, where $a$ and $c$ are the vertices of degree 3.
Then a random cut in $K_4^-$ is isomorphic to $C_4$ if and only if $a$ and $c$ belong to one side of the cut,
and $b$ and $d$ to the other side. This happens with probability $1/8$ . Clearly, $q_F(K_4^-) =0$ for any 4-forest \(F\): \(K_{4}^{-}\) contains no 4-forest.
\item Finally, $|\lambda^{(2)}(G)|$ is the difference between
    two probabilities, hence is at most 1.
\end{enumerate}
\end{proof}
Taking a linear combination of the two OCC spectra from the previous claims gives us the
desired OCC spectrum, which completes the proof of our main theorem,
Theorem \ref{main}, when $p = 1/2$.
\begin{cor} \label{cor:OCCmain}
Let $\Lambda = \Lambda^{(1)} + \frac{16}{17}\gamma'\Lambda^{(2)}$. Then $\Lambda$ is
an OCC spectrum as described in Corollary \ref{corMain}:
\begin{itemize}
\item $\lambda_\emptyset =1$.
\item $\lambda_G = -1/7$ for all non-empty subgraphs $G$ of
    $K_3$ (and for the graph consisting of two disjoint edges).
\item Letting $\gamma = \frac{1}{17} \gamma' $ gives that
 $ \lambda_G \ge -1/7 + \gamma$ for any $G$ with more than three edges.
\end{itemize}%\[|\mathcal{F}| \leq 2^{2^{n}-3}.\]
\end{cor}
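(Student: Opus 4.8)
The plan is to verify directly that the linear combination $\Lambda = \Lambda^{(1)} + \frac{16}{17}\gamma'\Lambda^{(2)}$ has the three claimed properties, using linearity of the space of OCC spectra together with the explicit data in Claims~\ref{goodOCC1} and \ref{goodOCC2}. Since both $\Lambda^{(1)}$ and $\Lambda^{(2)}$ are OCC spectra, and OCC spectra form a linear space, $\Lambda$ is automatically an OCC spectrum; so the only work is bookkeeping on the eigenvalues, graph by graph according to edge count.

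First I would handle the graphs with at most three edges. By Claim~\ref{goodOCC2}(1), $\lambda^{(2)}_H = 0$ for every such $H$, so $\lambda_H = \lambda^{(1)}_H$. Plugging in, $\lambda_\emptyset = \lambda^{(1)}_\emptyset = 1$, and for every nonempty subgraph of $K_3$ as well as the two-disjoint-edges graph — all of which lie in $\Lambda^{(1)}_{\min}$ by the third bullet of Claim~\ref{goodOCC1} — we get $\lambda_H = \lambda^{(1)}_H = -1/7$. This already gives the first two bullets of the corollary.

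Next I would treat the graphs with more than three edges, splitting into the ``problematic'' graphs (the 4-forests and $K_4^-$) and the rest. For a 4-forest $F$: by Claim~\ref{goodOCC1} it is in $\Lambda^{(1)}_{\min}$, so $\lambda^{(1)}_F = -1/7$, and by Claim~\ref{goodOCC2}(2) $\lambda^{(2)}_F = 1/16$; hence $\lambda_F = -1/7 + \frac{16}{17}\gamma' \cdot \frac{1}{16} = -1/7 + \frac{\gamma'}{17} = -1/7 + \gamma$, which is $\ge -1/7 + \gamma$ with equality. For $K_4^-$: again $\lambda^{(1)}_{K_4^-} = -1/7$ and $\lambda^{(2)}_{K_4^-} = 1/8$, so $\lambda_{K_4^-} = -1/7 + \frac{16}{17}\gamma' \cdot \frac{1}{8} = -1/7 + \frac{2\gamma'}{17} \ge -1/7 + \gamma$. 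Finally, for any graph $G$ with more than three edges that is \emph{not} a 4-forest or $K_4^-$: by the fourth bullet of Claim~\ref{goodOCC1}, $\lambda^{(1)}_G \ge -1/7 + \gamma'$, and by Claim~\ref{goodOCC2}(4), $|\lambda^{(2)}_G| \le 1$, so
\[
\lambda_G \ge -1/7 + \gamma' - \frac{16}{17}\gamma' = -1/7 + \frac{\gamma'}{17} = -1/7 + \gamma.
\]
This establishes the third bullet in all cases, so $\Lambda$ has exactly the properties required by Corollary~\ref{corMain} (minimal eigenvalue $-1/7$, attained only on graphs with at most three edges, and positive spectral gap $\gamma$).

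There is essentially no serious obstacle here: the heavy lifting has been outsourced to Claim~\ref{goodOCC1} (whose proof is deferred to the cut-statistics section) and Claim~\ref{goodOCC2}. The only point requiring a little care is confirming that the coefficient $\frac{16}{17}\gamma'$ is chosen precisely so that the worst-case bound $\gamma' - \frac{16}{17}\gamma'$ for non-problematic large graphs exactly matches the value $\frac{16}{17}\gamma' \cdot \frac{1}{16}$ obtained on the 4-forests — i.e. both equal $\frac{\gamma'}{17} = \gamma$ — so that the spectral gap is simultaneously not destroyed on the old non-tight graphs and is opened up on the formerly-tight 4-forests and $K_4^-$. Once that balancing is observed, the corollary follows, and combined with Corollary~\ref{corMain} it completes the proof of Theorem~\ref{main} in the case $p = 1/2$.
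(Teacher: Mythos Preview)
Your proof is correct and follows essentially the same approach as the paper's own proof, which is just the terse observation that the new eigenvalues for $4$-forests, $K_4^-$, and all other nonempty graphs are respectively $-1/7 + \tfrac{16}{17}\cdot\tfrac{1}{16}\gamma'$, $-1/7 + \tfrac{16}{17}\cdot\tfrac{1}{8}\gamma'$, and at least $-1/7 + \gamma' - \tfrac{16}{17}\gamma'$. Your write-up is in fact more detailed and more carefully organized than the paper's.
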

\begin{proof}
Note that for any 4-forest $F$, the new eigenvalue $\lambda_F$ is now equal to $-1/7 + \frac{16}{17}\frac{1}{16}\gamma'$,
the eigenvalue $\lambda_{K_4^-}$ has increased to $-1/7 +  \frac{16}{17}\frac{1}{8}\gamma'$, and for all other non-empty graphs $G$ we have
$\lambda_G \ge -1/7 + \gamma' - \frac{16}{17}\gamma'$.
\end{proof}

\section{Cut Statistics}\label{sectionCutStatistics}
The purpose of this section is to study the cut statistics of graphs for a (uniform) random cut, in order to prove
Claim \ref{goodOCC1}. We begin by using block-decompositions of graphs to simplify our calculations.

We will sometimes think of a random cut in \(G\) as being produced by a random red/blue colouring of \(V(G)\), where each vertex is independently coloured red or blue with probability 1/2. For a red/blue colouring \(c\colon V(G) \to \{\text{red},\text{blue}\}\), we let \(Y(c)\) denote the number of edges in the associated cut, i.e. the number of multicoloured edges.

Let \(\mathcal{Q}(G) = (q_{k})_{k \geq 0}\) denote the distribution of \(|G \cap B|\); we call this the \emph{cut distribution} of \(G\). Let
\[Q_{G}(X) = \sum_{k\geq 0} q_{k}(G)X^{k}\]
denote the probability-generating function of \(|G \cap B|\). For example, if \(G\) is a single edge then \(q_{0}(-) = q_{1}(-) = 1/2\), and therefore
\[Q_{-}(X) = \tfrac{1}{2} + \tfrac{1}{2}X.\]
We will see that \(|G \cap B|\) is a sum of independent random variables \(|H \cap B|\), where \(H\) ranges over certain subgraphs of \(G\). Probability-generating functions will be a convenient tool for us, since if \(Y_1\) and \(Y_2\) are independent random variables, we have \(Q_{Y_1+Y_2}(X) = Q_{Y_1}(X)Q_{Y_{2}}(X)\).

\ynote{Here is some discussion on the origin of $\Lambda^{(1)}$.}

In the rest of the section, we will study the cut distribution in enough detail so that we can prove Claim~\ref{goodOCC1}. But first, let us digress and explain how to construct $\Lambda^{(1)}$. We begin by considering some small graphs and their cut distributions:
\[
\begin{array}{c|*{5}c}
 G & q_0(G) & q_1(G) & q_2(G) & q_3(G) & q_4(G) \\\hline
\emptyset & 1 & 0 & 0 & 0 & 0 \\
- & 1/2 & 1/2 & 0 & 0 & 0  \\
\wedge & 1/4 & 1/2 & 1/4 & 0 & 0 \\
\triangle & 1/4 & 0 & 3/4 & 0 & 0  \\
%F_3 & 1/8 & 3/8 & 3/8 & 1/8 & 0 & 0 \\
F_4 & 1/16 & 4/16 & 6/16 & 4/16 & 1/16  \\
K_4^- & 1/8 & 0 & 1/4 & 1/2 & 1/8
\end{array}
\]
In the table, $F_4$ is a forest with $4$ edges (they all have the same cut distribution).

Suppose we are looking for an OCC spectrum of the form
\[ \lambda(G) = (-1)^{|G|} \left[ c_0 q_0(G) + c_1 q_1(G) + c_2 q_2(G) + c_3 q_3(G) + c_4 q_4(G) \right]. \]
Since $\lambda(\emptyset) = 1$, $c_0 = 1$. Applying the proof of Theorem~\ref{ThmHoffman}
to a $\trium$, whose Fourier transform is concentrated  on subgraphs of a triangle,
shows that we need $\lambda(G) = \lambda_{\min} = -1/7$ for all subgraphs of the triangle.
This forces the choices $c_1 = -5/7$ and $c_2 = -1/7$.
Substituting $c_0,c_1,c_2$ into the equations defined by $F_4$ and $K_4^-$
gives us a lower and upper bound (respectively) on $4c_3 + c_4$.
Both bounds coincide (what luck! This good fortune does not hold for $p > 1/2$),
implying that $4c_3 + c_4 = 3/7$. To simplify matters, we choose $c_4 = 0$ and so $c_3 = 3/28$.

The OCC spectrum of $\Lambda^{(1)}$ is engineered to work for the graphs appearing in the table. In the rest of this section, we show that it also works for all other graphs.

\ynote{Here endeth the discussion.}

Observe that if \(G = G_1 \sqcup G_2\) then
\[Q_{G}(X) = Q_{G_1}(X)Q_{G_2}(X),\]
since \(G_1 \cap B\) and \(G_2 \cap B\) are independent, and \(|G \cap B| = |G_1 \cap B|+|G_2 \cap B|\).

Let \(G\) be a connected graph, and suppose that \(v\) is a \emph{cutvertex} of \(G\), meaning a vertex whose removal disconnects \(G\). Suppose the removal of \(v\) separates \(G\) into components \(G[S_1],\ldots,G[S_N]\). For each \(i\), let
\[H_{i} = G[S_{i} \cup \{v\}].\]
Observe that the system of random variables \(\{H_i \cap B:\ i \in [N]\}\) is independent, since for {\em any} vertex \(v\), the distribution of \(H \cap B\) remains unchanged even if we fix the class of the vertex \(v\), in which case the independence is immediate. Clearly,
\[|G \cap B| = \sum_{i=1}^{N}|H_{i} \cap B|.\]
It follows that
\[Q_{G}(X) = \prod_{i=1}^{N}Q_{H_i}(X).\]
Let \(H = \bigsqcup_{i} H_i\); \(H\) is produced by splitting the graph \(G\) at the vertex \(v\). (For example, splitting the graph \(\bowtie\) at the cutvertex in its centre produces the graph \(\rhd\ \lhd\).) Then
\[Q_{G}(X)=Q_{H}(X).\]

Recall that a {\em bridge} of a graph \(G\) is an edge whose removal increases the number of connected components of \(G\); a {\em block} of \(G\) is a bridge or a biconnected component of \(G\). Note that if \(G\) is bridgeless then \(q_1(G) = 0\), since a cut of size 1 would be a bridge.

Observe that if \(G\) and \(G'\) have the same number of bridges and the same number of blocks isomorphic to \(K\) for each biconnected graph \(K\), then \(G\) and \(G'\) have the same cut-distribution. In fact, if \(G\) has \(m\) bridges and \(t_{K}\) blocks isomorphic to \(K\) (for each biconnected graph \(K\)), then repeating the above splitting process within every component until there are no more cutvertices, we end up producing a graph \(G_{s}\) which is a vertex-disjoint union of all the blocks of \(G\). We call \(G_{s}\) the \emph{split} of \(G\). We have:
\[Q_{G}(X) = Q_{G_{s}}(X) = (\tfrac{1}{2} + \tfrac{1}{2}X)^{m} \prod_{K \in \mathcal{K}} (Q_{K}(X))^{t_{K}} = \tfrac{1}{2^{m}} (1+X)^{m}\prod_{K \in \mathcal{K}} (Q_{K}(X))^{t_{K}},\]
where \(\mathcal{K}\) denotes a set of representatives for the isomorphism classes of biconnected graphs. For example,
\[Q_{\elong} = (\tfrac{1}{2} + \tfrac{1}{2}X)(Q_{\lhd}(X))^{2} = (\tfrac{1}{2} + \tfrac{1}{2}X)(\tfrac{1}{4}+\tfrac{3}{4}X^{2})^{2}.\]

Now suppose \(G\) has exactly \(m\) bridges. Let \(H\) be the union of the biconnected components of \(G_{s}\); write
\[Q_{H}(X) = \sum_{i \geq 0}a_{i}X^{i}.\]
Here \((a_{i})_{i \geq 0}\) is the cut distribution of \(H\), so obviously, \(\sum_{i \geq 0}a_{i} = 1\). Note that \(a_{1}=0\), since \(H\) is bridgeless. We have
\begin{eqnarray}
\label{eq:pgfexp}
Q_{G}(X) & = & (\tfrac{1}{2}+\tfrac{1}{2}X)^{m}Q_{H}(X) \nonumber \\
& = & \tfrac{1}{2^{m}} (1+X)^{m}(a_{0}+a_{2}X^{2}+a_{3}X^3+\ldots) \nonumber\\
& = & \tfrac{1}{2^{m}}\left(1+mX+\tbinom{m}{2}X^2+\tbinom{m}{3}X^3+\ldots\right) \left(a_{0}+a_{2}X^{2}+a_{3}X^3+\ldots\right) \nonumber\\
& = & \tfrac{1}{2^{m}}\left(a_{0}+ma_{0}X+\left(\tbinom{m}{2}a_{0}+a_{2}\right)X^2+\left(\tbinom{m}{3}a_0+ma_{2}+a_{3}\right)X^3+R(X)X^4\right),
\end{eqnarray}
where \(R(X) \in \mathbb{Q}[X]\).

\subsection{Proof of Claim \ref{goodOCC1}} \label{sec:goodOCC1}
We will need the following additional facts about the cut distributions of graphs:
\begin{lem}\label{lemCutStatistics} Let \(G\) be a graph.
\begin{enumerate}
\item If \(G\) has exactly \(N\) connected components, then \(q_0(G) = 2^{N-v(G)}\).
\item If \(G\) has exactly \(m\) bridges, then \(q_1(G)=mq_0(G)\).
\item If \(G\) has a vertex with odd degree, then \(q_{k}(G) \leq 1/2\) for any \(k \geq 0\).
\item For any odd \(k\), \(q_{k}(G) \leq 1/2\).
\item Always \(q_{2}(G) \leq 3/4\).
\end{enumerate}
\end{lem}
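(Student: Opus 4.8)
The plan is to handle the five items largely independently, using the block-decomposition machinery and the generating-function expansion \eqref{eq:pgfexp} already set up. Items (1) and (2) follow quickly: if $G$ has $N$ components, then a uniform random red/blue colouring of $V(G)$ can be thought of as an independent choice of colour for each vertex, and a cut of size $0$ occurs precisely when every component is monochromatic; with $v(G)$ non-isolated vertices and $N$ components this probability is $2^{N}\cdot 2^{-v(G)}$, giving $q_0(G) = 2^{N-v(G)}$. For (2), reading off the coefficient of $X$ in \eqref{eq:pgfexp} gives $q_1(G) = \tfrac{1}{2^m} m a_0 = m q_0(G)$, since the split of $G$ has the same bridges and $a_0 = 2^m q_0(G)$ (monochromaticity of the blocks); alternatively, a cut of size $1$ must consist of a single bridge, and conditioning on which bridge it is, the rest of $G$ must be ``monochromatic on each side'' exactly as for $q_0$.

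For items (3) and (4), I would use a symmetry/involution argument. Suppose $v$ has odd degree. Fix a colouring $c$ of $V(G) \setminus \{v\}$; then flipping the colour of $v$ changes the number of multicoloured edges at $v$ by an odd amount (if $d$ edges at $v$ are currently multicoloured, flipping makes it $\deg(v) - d$, and $\deg(v) - 2d$ is odd), so the two colourings $c$ and $c'$ (differing only at $v$) yield cut sizes of different parities; in particular, for any fixed $k$, at most one of the two contributes to the event $Y(c) = k$. Pairing up all colourings this way shows $q_k(G) \le 1/2$ for every $k$, proving (3). For (4), if $G$ has any vertex of odd degree we are done by (3); otherwise every vertex has even degree, so $|G \cap B|$ is always even (the cut edges form an even-degree subgraph? — more carefully: a cut in an Eulerian graph has even size, since it is an edge cut and $G$ has all degrees even, hence $\sum_{v \in V_1}\deg_G(v)$ is even and equals $2|G[V_1]| + |{\rm cut}|$), so $q_k(G) = 0$ for odd $k$, and $0 \le 1/2$.

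For item (5), the bound $q_2(G) \le 3/4$, I expect the main work to lie. The natural split is on the number $m$ of bridges: from \eqref{eq:pgfexp}, $q_2(G) = \tfrac{1}{2^m}\big(\binom{m}{2}a_0 + a_2\big)$ where $(a_i)$ is the cut distribution of the bridgeless part $H$ with $a_0 + a_2 + a_3 + \cdots = 1$, $a_1 = 0$. For $m = 0$ this reads $q_2(G) = a_2 \le 1 - a_0$, and one must bound $a_0$ from below — but $H$ bridgeless with $a_0$ possibly tiny does not obviously give $a_2 \le 3/4$, so instead I would argue directly on biconnected graphs $K$: the smallest biconnected graph that isn't a single edge is $C_3$, with $Q_{C_3} = \tfrac14 + \tfrac34 X^2$, and any biconnected graph $K$ on $\ge 3$ vertices has $q_2(K) \le 3/4$ (this is itself a small case analysis: $q_0(K) = 2^{1 - v(K)} \le 1/4$ when $v(K) \ge 3$, and one checks the coefficient of $X^2$ in the product $\prod (Q_K)^{t_K}$ stays $\le 3/4$ using $\sum a_i = 1$, $a_1 = 0$). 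For general $G$, the cleanest route is: if $q_2(G)$ is realized, a cut of size $2$ either uses two bridges, or one bridge and... — actually the slick argument is to note $q_2(G) = \Pr[Y(c) = 2]$ and, choosing a single edge $e$ of $G$ (which exists unless $G$ is edgeless, where $q_2 = 0$), condition on the colour pattern of the endpoints of $e$: with probability $1/2$ the edge $e$ is cut, with probability $1/2$ it is not, and in at least one of these two conditional worlds the event $\{Y = 2\}$ has probability $\le$ the corresponding quantity for a smaller graph; combined with an induction this should yield $3/4$, with the base case being $G$ a single edge ($q_2 = 0$) or a triangle ($q_2 = 3/4$, the extremal case). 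I expect to need a little care to make the induction bottom out correctly, and to confirm that $C_3$ (and disjoint unions thereof, and $C_3$ plus isolated structure) is genuinely the worst case; this is the step I would budget the most time for.
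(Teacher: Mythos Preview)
Your proofs of items (1)--(4) are correct and essentially the same as the paper's: item~(1) via monochromatic components, item~(2) by reading off the coefficient of $X$ in \eqref{eq:pgfexp}, item~(3) by the involution ``flip the colour of $v$'', and item~(4) by reducing to~(3) and then observing that an even-degree graph has only even cuts.

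The gap is item~(5). Your sketch offers several possible routes (bound $a_2$ via $a_0$ in the bridgeless case; induct on biconnected pieces; condition on an edge and induct) but completes none of them, and the first two are more delicate than they look: for a disjoint union of biconnected blocks the coefficient of $X^2$ in the product $\prod Q_{K_i}$ is $\sum_i q_2^{(i)} \prod_{j\neq i} q_0^{(j)}$, and controlling this requires knowing $q_2(K)\le 3/4$ for \emph{every} biconnected $K$, which is the statement you are trying to prove. The paper sidesteps all of this with a first-moment trick: since $\Exp|G\cap B| = |G|/2$ and $q_0(G)>0$,
\[
\tfrac{|G|}{2} \;=\; \sum_k k\,q_k(G) \;<\; 2\,q_2(G) + |G|\bigl(1-q_2(G)\bigr),
\]
which rearranges to $q_2(G) < \tfrac{|G|}{2(|G|-2)}$, already $<3/4$ once $|G|\ge 6$. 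For $|G|\le 5$ one passes to the split $G_s$; if $G$ has a bridge then $G_s$ has a vertex of degree~$1$, so item~(3) gives $q_2(G)=q_2(G_s)\le 1/2$, and otherwise $G$ is biconnected with at most five edges, leaving only $K_3$, $C_4$, $C_5$, $K_4^-$ to check by hand. This two-line averaging argument is the key idea you are missing.
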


\begin{proof} We follow the items of the lemma:
 \begin{enumerate}
\item If \(G\) has $N$ connected components then \(G \cap B = 0\) iff all the vertices of each connected component are given the same colour; the probability of this is \(2^{N-v(G)}\).
\item This follows immediately from equation (\ref{eq:pgfexp}).
 \item Let \(G\) be a graph with a vertex \(v\) of odd degree. For any red/blue colouring \(c\colon V(G) \to \{\textrm{red},\textrm{blue}\}\) of \(V(G)\), changing the colour of \(v\) produces a new colouring \(c'\) with \(Y(c') \neq Y(c)\). Since \((c')' = c\), \(c'\) determines \(c\). Denote by $Y_v(c),Y_v(c')$ the number of edges incident to $v$ which are cut in $c,c'$, respectively. Then $Y_v(c) + Y_v(c') = \deg(v)$, hence $Y_v(c) \neq Y_v(c')$; since $Y(c)-Y_v(c) = Y(c')-Y_v(c')$, necessarily $Y(c) \neq Y(c')$. Thus at most one cut of each pair $(c,c')$ cuts exactly $k$ edges.
\item By item 3, we may assume that all the degrees of \(G\) are even. Since a graph is Eulerian if and only if it is connected and all its degrees are even, every connected component of \(G\) is Eulerian. It follows that every cut in \(G\) has even size, and therefore \(q_{k}(G) = 0\).
\item The average number of edges in a random cut is $|G|/2$, and therefore
\[ |G|/2 = \sum_k kq_k(G) < 2 q_2(G) + (1-q_2(G))|G| = |G| + (2-|G|)q_2(G);\]
the inequality is strict because $q_0(G) > 0$.
Hence,
\[ q_2(G) < \frac{|G|/2}{2(|G|-2)} = \frac{1}{2} + \frac{1}{|G|-2}. \]
Therefore $q_2(G) < 3/4$ if $|G| \geq 6$. Assume from now on that $|G| \leq 5$.

Let $G_s$ be the split graph obtained by splitting $G$ into its blocks, as described above. If $G$ has any bridges, then $q_2(G) = q_2(G_s) \leq 1/2$, by 3. Otherwise, since each block has at least 3 edges and \(|G| \leq 5\), there is just one block, i.e. \(G = G_s\) is biconnected. Therefore $G$ is either a triangle, a \(C_4\), a \(C_5\) or a $K_4^-$. One may check that $q_2(K_3) = q_2(C_4) = 3/4$, $q_2(C_5) = 5/8$ and $q_2(K_4^-) = 1/4$.
 \end{enumerate}
\end{proof}

The following lemma encapsulates some trivial properties of graphs:
\begin{lem} \label{lem:trivialgraphics}
Let $G$ be a graph, and $H$ be the union of its biconnected components.
\begin{enumerate}
 \item We have $q_0(\emptyset) = 1$, $q_0(-)=1/2$, and $q_0(G)\leq 1/4$ for all other graphs.
 \item If $m = 0$ and $|G|$ is odd, then either $q_0(G) \leq 1/16$, or $G$ is a triangle or a $K_4^-$.
 \item Either $H = \emptyset$, or $a_0 \leq 1/4$.
\end{enumerate}
\end{lem}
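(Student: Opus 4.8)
The plan is to reduce all three parts to the single formula $q_0(G) = 2^{N-v(G)}$ supplied by Lemma~\ref{lemCutStatistics}(1), where (after discarding isolated vertices, which do not affect any cut) $N$ denotes the number of connected components of $G$ carrying at least one edge, and then to run a very short case analysis governed by the block structure of $G$. Throughout I will use that an edge-containing connected component on $c$ vertices contributes a factor $2^{-(c-1)}$ to $q_0$, and that these factors are independent across components.

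For part (1), I would first dispose of the two exceptional graphs directly: a graph with no edges has $q_0 = 1$, and for a single edge its two endpoints are monochromatic with probability $1/2$. For any other graph $G$ — which therefore has at least two edges — I would show $v(G) - N \ge 2$. Indeed each edge-containing component $K$ contributes $v(K)-1 \ge 1$ to $v(G)-N$; so if there are at least two such components we are done, and if there is exactly one it carries $\ge 2$ edges and hence has $\ge 3$ vertices, contributing $\ge 2$ by itself. In either case $q_0(G) = 2^{-(v(G)-N)} \le 1/4$.

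For part (2), the hypothesis $m=0$ makes $G$ bridgeless, hence every edge-containing component is $2$-edge-connected and so contains a cycle and has $\ge 3$ vertices. Writing these components as $K_1,\dots,K_N$ with $c_i := v(K_i) \ge 3$, we get $q_0(G) = \prod_i 2^{-(c_i-1)}$. If $N \ge 2$, or $N=1$ with $c_1 \ge 5$, this is $\le 2^{-4} = 1/16$. The only survivors are a single bridgeless connected graph on $3$ or $4$ vertices: on $3$ vertices this forces $K_3$, and on $4$ vertices the bridgeless graphs are exactly $C_4$, $K_4^-$ and $K_4$, so the parity hypothesis $|G|$ odd leaves only $K_3$ and $K_4^-$. (This parity hypothesis is essential and not cosmetic, since $q_0(C_4) = q_0(K_4) = 1/8 > 1/16$.)

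For part (3), note that if $G$ is a forest it has no biconnected components, so $H = \emptyset$; otherwise $H$ contains a biconnected component, which is not a bridge and hence contains a cycle, so $H$ has a component on $\ge 3$ vertices and is in particular neither empty nor a single edge. Then part (1), applied to $H$, gives $a_0 = q_0(H) \le 1/4$. I do not foresee any genuine obstacle here: the only points demanding care are the bookkeeping of $v(G)-N$ in the presence of isolated vertices and several components in part (1), and correctly enumerating the bridgeless graphs on at most four vertices in part (2) — with the parity assumption being the one hypothesis that must not be dropped.
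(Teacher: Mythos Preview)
Your proof is correct and follows essentially the same route as the paper's: all three parts are reduced to the formula $q_0(G) = 2^{N-v(G)}$ from Lemma~\ref{lemCutStatistics}(1), followed by the same short case analysis on components and, in part~(2), the same enumeration of bridgeless graphs on at most four vertices (with the parity hypothesis ruling out $C_4$ and $K_4$). The paper's write-up is simply terser --- it asserts ``follows from Lemma~\ref{lemCutStatistics}(1)'' for part~(1) and skips the explicit $v(G)-N \ge 2$ count --- but the underlying argument is the same.
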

\begin{proof}
 \begin{enumerate}
  \item Follows from Lemma~\ref{lemCutStatistics}(1).
  \item Since $m = 0$, every connected component of $G$ is biconnected, and so consists of at least three vertices. If $G$ has at least two connected components, then Lemma~\ref{lemCutStatistics}(1) implies that $q_0(G) \leq 1/16$, so we may assume that $G$ is connected. If $G$ has at least $5$ vertices, then again, $q_0(G) \leq 1/16$. The only remaining graphs are the triangle and $K_4^-$.
  \item The graph $H$ is a union of biconnected graphs. In particular, $H \neq -$. The item now follows from item~1.
 \end{enumerate}
\end{proof}

We can now prove Claim \ref{goodOCC1}.
\begin{proof}[of Claim \ref{goodOCC1}]
Write
\[f(G) = q_0(G)-\tfrac{5}{7} q_1(G) - \tfrac{1}{7}q_2(G) + \tfrac{3}{28}q_3(G).\]
The proof breaks into two parts: odd $|G|$ and even $|G|$.

\noindent \textbf{Proof for graphs with an odd number of edges:}
We will show that if \(|G|\) is odd then \(f(G) \leq \tfrac{1}{7}\), with equality if and only if \(G\) is an edge, a triangle, or \(K_{4}^{-}\), and that in all other cases, \(f(G) \leq \tfrac{1}{7}-\tfrac{1}{56}\).

By Lemma \ref{lemCutStatistics}, if \(G\) has exactly \(m\) bridges then \(q_{1} = mq_{0}\), so
\begin{equation}
 \label{eq:coeffs}
f(G) = (1-\tfrac{5}{7}m)q_{0}(G) - \tfrac{1}{7}q_2(G) + \tfrac{3}{28}q_3(G).
\end{equation}
First suppose \(m = 1\). In that case,
\[f(G) = \tfrac{2}{7}q_{0}(G) - \tfrac{1}{7}q_2(G) + \tfrac{3}{28}q_3(G).\]
If $G=-$ then $f(G) = -\frac{1}{7}$. Otherwise, Lemma~\ref{lem:trivialgraphics}(1) shows that \(q_{0}(G) \leq \tfrac{1}{4}\). By Lemma \ref{lemCutStatistics}(4), \(q_{3}(G) \leq \tfrac{1}{2}\), and therefore
\[f(G) \leq \tfrac{2}{7}\tfrac{1}{4} + \tfrac{3}{28}\tfrac{1}{2} = \tfrac{1}{8} = \tfrac{1}{7} - \tfrac{1}{56}.\]
If \(m \geq 2\), the coefficient of \(q_{0}(G)\) in equation~(\ref{eq:coeffs}) is negative, and therefore
\[f(G) < \tfrac{3}{28} = \tfrac{1}{7}-\tfrac{1}{28} < \tfrac{1}{7}-\tfrac{1}{56}.\]

From now on, we assume that \(m=0\). If $q_0(G) \leq \tfrac{1}{16}$, then using \(q_{3}(G) \leq \tfrac{1}{2}\), we obtain
\[f(G) \leq \tfrac{1}{16}+\tfrac{3}{28}\tfrac{1}{2} = \tfrac{13}{112} = \tfrac{1}{7}-\tfrac{3}{112} < \tfrac{1}{7}-\tfrac{1}{56},\]
so we are done. Otherwise, Lemma~\ref{lem:trivialgraphics}(2) implies that $G$ is either a triangle or $K_4^-$. One calculates explicitly that \(f(K_3) = f(K_4^-) = \frac{1}{7}\), completing the proof for all graphs with \(|G|\) odd.

\medskip

\noindent \textbf{Proof for graphs with an even number of edges:} We will show that if \(|G|\) is even then \(f(G) \geq -\tfrac{1}{7}\), with equality if and only if \(G\) is a 2-forest or a 4-forest, and that in all other cases, \(f(G) \geq -\tfrac{1}{7} + \tfrac{1}{28}\).

By equation~(\ref{eq:pgfexp}) we have:
\begin{eqnarray*}
f(G) & = & \tfrac{1}{2^{m}}\left[a_{0}-\tfrac{5}{7}ma_{0}-\tfrac{1}{7}\left(\tbinom{m}{2}a_{0}+a_{2}\right)+\tfrac{3}{28}\left(\tbinom{m}{3}a_0+ma_{2}+a_{3}\right)\right]\\
&  = & \tfrac{1}{2^{m}}\left[\left(1-\tfrac{5}{7}m-\tfrac{1}{7}\tbinom{m}{2}+\tfrac{3}{28} \tbinom{m}{3}\right)a_{0}+(-\tfrac{1}{7}+\tfrac{3}{28}m)a_{2}+\tfrac{3}{28}a_{3}\right].
\end{eqnarray*}
When \(m = 0\), i.e. every component of \(G\) is bridgeless,
\[f(G) = a_{0}-\tfrac{1}{7}a_{2}+\tfrac{3}{28}a_{3}.\]
By Lemma \ref{lemCutStatistics}(5), \(a_{2} \leq 3/4\), and therefore
\[f(G) > -\tfrac{1}{7}+\tfrac{1}{28}.\]
When \(m = 1\),
\[f(G) = \tfrac{1}{2}(\tfrac{2}{7}a_{0}-\tfrac{1}{28}a_{2}+\tfrac{3}{28}a_{3}) = \tfrac{1}{7}a_{0}-\tfrac{1}{56}a_{2}+\tfrac{3}{28}a_{3} > -\tfrac{3}{4} \tfrac{1}{56} = -\tfrac{1}{7}+\tfrac{29}{224} > -\tfrac{1}{7}+\tfrac{1}{28}.\]
When \(m=2\),
\[f(G) = \tfrac{1}{4}(-\tfrac{4}{7}a_{0}+\tfrac{1}{14}a_{2}+\tfrac{3}{28}a_{3}) = -\tfrac{1}{7}a_{0}+\tfrac{1}{56}a_{2}+\tfrac{3}{112}a_3.\]
We have \(f(G) = -\tfrac{1}{7}\) if and only if \(H = \emptyset\), i.e. \(G\) has exactly two edges. If \(H \neq \emptyset\), Lemma~\ref{lem:trivialgraphics}(3) implies that \(a_{0} \leq \tfrac{1}{4}\), and therefore
\[f(G) \geq -\tfrac{1}{28} = -\tfrac{1}{7}+\tfrac{3}{28} > -\tfrac{1}{7}+\tfrac{1}{28}.\]
When \(m = 3\),
\[f(G) = \tfrac{1}{8}(-\tfrac{41}{28}a_{0}+\tfrac{5}{28}a_{2}+\tfrac{3}{28}a_{3}) = -\tfrac{41}{224}a_{0}+\tfrac{5}{224}a_{2}+\tfrac{3}{224}a_{3}.\]
Since \(|G|\) is even, \(H \neq \emptyset\), so as above, \(a_{0} \leq \tfrac{1}{4}\). It follows that
\[f(G) \geq -\tfrac{41}{896} = -\tfrac{1}{7}+\tfrac{87}{896} > -\tfrac{1}{7}+\tfrac{1}{28}.\]
When \(m = 4\),
\[f(G) = \tfrac{1}{16}(-\tfrac{16}{7}a_{0}+\tfrac{2}{7}a_{2}+\tfrac{3}{28}a_{3}) = -\tfrac{1}{7}a_{0}+\tfrac{1}{56}a_{2}+\tfrac{3}{448}a_{3}.\]
We have \(f(G) = -\tfrac{1}{7}\) if and only if \(H = \emptyset\), i.e. \(G\) is a forest with 4 edges. Otherwise, \(a_{0} \leq \tfrac{1}{4}\), and therefore
\[f(G) \geq -\tfrac{1}{28} = -\tfrac{1}{7}+\tfrac{3}{28} > -\tfrac{1}{7}+\tfrac{1}{28}.\]

Finally, assume that \(m \geq 5\). Since the coefficients of \(a_{2}\) and \(a_{3}\) in \(f(G)\) are positive for \(m \geq 2\), we need only bound the coefficient of \(a_{0}\) away from \(-\tfrac{1}{7}\). Write
\[r(m) = \tfrac{1}{2^{m}}\left(1-\tfrac{5}{7}m-\tfrac{1}{7}\tbinom{m}{2}+\tfrac{3}{28} \tbinom{m}{3}\right)\]
for this coefficient. For \(m = 5\) we have
\[r(5) = -\tfrac{41}{448}.\]
Since \(e(G)\) is even, \(H \neq \emptyset\), and therefore \(a_{0} \leq \tfrac{1}{4}\), so
\[f(G) \geq -\tfrac{41}{448}\tfrac{1}{4} = -\tfrac{1}{7}+\tfrac{215}{1792} > -\tfrac{1}{7}+\tfrac{1}{28}.\]
For \(m = 6\), we have
\[r(6) = -\tfrac{23}{448},\]
and therefore
\[f(G) \geq -\tfrac{23}{448} = -\tfrac{1}{7}+\tfrac{41}{448}> -\tfrac{1}{7}+\tfrac{1}{28}.\]
For \(m = 7\), we have
\[r(7) = -\tfrac{13}{512}.\]
For \(m \geq 7\), the polynomial
\[1-\tfrac{5}{7}m-\tfrac{1}{7}\tbinom{m}{2}+\tfrac{3}{28} \tbinom{m}{3}\]
in the numerator of \(r\) is strictly increasing, and therefore
\[r(m) \geq -\tfrac{13}{512}\ \forall m \geq 7.\]
Hence,
\[f(G) \geq -\tfrac{13}{512} = -\tfrac{1}{7}+\tfrac{421}{3584}> -\tfrac{1}{7}+\tfrac{1}{28}\]
whenever \(m \geq 7\), completing the proof of Claim \ref{goodOCC1}.
\end{proof}

\section{\texorpdfstring{$p < 1/2$}{p < 1/2}}\label{secSmallp}
In this section, we explain how our method can be used to prove Theorem \ref{main} for all \(p \in (0,1/2)\). Note that when \(p < 1/2\), the intersecting and agreeing questions are no longer equivalent. Indeed, the triangle-agreeing family \(\mathcal{F}\) of all graphs containing no edges of a fixed triangle has \(\mu_{p}(\mathcal{F}) = (1-p)^3 > p^3\). For \(p < 1/2\), we will only be concerned with odd-cycle-intersecting families. \begin{comment}As will be explained in section \ref{sec:smallp}, for $p < 1/4$, Theorem \ref{main} follows immediately from \cite{ehud}.\end{comment}
\subsection{Skew analysis}
The general setting for skew Fourier analysis is the `weighted cube', i.e. \(\{0,1\}^{X}\) (where \(X\) is a finite set), endowed with the product measure
\[\mu_{p}(S) = p^{|S|}(1-p)^{|X|-|S|}\quad (S \subset X).\]
In our case, \(X = [n]^{(2)}\), the edge-set of the complete graph, so our probability space is simply $G(n,p)$. If \(G \subset K_n\), we define $\mu_p(G)$ to be the probability that $G(n,p)=G$, i.e.
$$\mu_{p}(G) = p^{|G|} (1-p)^{\binom{n}{2}-|G|},$$
and if $\cF$ is a family of graphs, we define \(\mu_{p}(\mathcal{F})\) to be the probability that \(G(n,p) \in \mathcal{F}\), i.e.
$$\mu_p(\cF) = \sum_{G \in \cF} \mu_p(G).$$
The measure \(\mu_p\) induces the following inner product on the vector space \(\mathbb{R}[\{0,1\}^{X}]\) of real-valued functions on \(\{0,1\}^{X}\):
$$
\ip{f,g} = \ip{f,g}_{p} = \Exp_{S \sim \mu_{p}}{[f(S)\cdot g(S)]} = \sum_{S \subset X} \mu(S) f(S)g(S) = \sum_{S \subset X}p^{|S|} (1-p)^{|X|-|S|} f(S)g(S).
$$
We define the \(p\)-skewed Fourier-Walsh basis as follows. For any \(e \in X\), let
$$
\chi_e(S) = \begin{cases}
      \sqrt{\frac{p}{1-p}} & \textrm{if }e \not \in S, \\
      -\sqrt{\frac{1-p}{p}} &  \textrm{if }e \in S. \end{cases}
$$
For each \(R \subset X\), let $\chi_R = \prod_{e \in R} \chi_e$. It is easy to see that \(\{\chi_{R}:\ R \subset X\}\) is an orthonormal basis for \((\mathbb{R}[\{0,1\}^{X}],\ip{,})\); we call it the (\(p\)-skewed) Fourier-Walsh basis. Every
$f : \{0,1\}^{X} \to \mathbb{R}$ has a unique expansion of the form
$$f = \sum_{R \subset X} \widehat{f}(R)\chi_R;$$
we have $\widehat{f}(R) = \ip{f,\chi_R}$ for each \(R \subset X\). We may call this the (\(p\)-skewed) Fourier expansion of \(f\). All the other formulas in section \ref{subsec:fourieranalysis} hold in the skewed setting also.

\begin{dfn}
For \(p < 1/2\), we define an OCC operator to be a linear operator $A$ on \(\mathbb{R}[\{0,1\}^{[n]^{(2)}}]\) such that
\begin{enumerate}
 \item If $f$ is the indicator-function of an odd-cycle-\emph{intersecting} family, then
\[f(G)=1 \Rightarrow Af(G)=0;\]
\item The Fourier-Walsh basis is a complete set of eigenfunctions of \(A\).
\end{enumerate}
\end{dfn}
(Note the change from odd-cycle-{\em agreeing} in the uniform-measure case.) As before, the set of OCC operators is a linear space.
\remove{It will be easier to describe the operators we will construct in matrix form,
 so we will introduce coordinates to our system.
To this end, we enumerate the edges of $K_n$ arbitrarily
from $0$ to $\binom{n}{2}-1$. We associate every subgraph of $K_n$ with its
characteristic vector of length ${\binom{n}{2}}$.
For any function $f : \mathbb{Z}_2^{\binom{n}{2}} \ra \mathbb{R}$ we represent $f$
by a column vector of length $2^{\binom{n}{2}}$.
The coordinate $f_i$ is $f(G)$, where $G$ is the graph whose characterstic vector
is the binary expansion of $i$.}

We will now construct a collection of OCC operators, one for each bipartite graph \(B\). Let
\[ M = \begin{pmatrix} \frac{1-2p}{1-p} & \frac{p}{1-p} \\ 1 & 0 \end{pmatrix};\]
we index the rows and columns of \(M\) with $\{0,1\}$.

Let \(B \subset K_n\) be a bipartite graph. For each edge $e$ of $K_n$, we define a \(2 \times 2\) matrix \(M_{B}^{(e)}\) as follows:
\[ M_{ B}^{(e)} = \begin{cases}  M &\text{if } e \in \barB;
\\  I_{2\times 2} &\text{if } e \in B, \end{cases} \]
where $I_{2\times 2}$ denotes the \(2 \times 2\) identity matrix. Finally, we define
\[ M_{B} = \bigotimes_{e \in K_n} M_{B}^{(e)}.\]
So $M_{B}$ is obtained from $M^{\otimes [n]^{(2)}}$
 by replacing $M$ with $I_{2 \times 2}$ for each edge of $B$; its rows and columns are indexed by \(\{0,1\}^{[n]^{(2)}}\). More explicitly, for any $G, H \subset K_n$,
$$
(M_{B})_{G,H} = \prod_{e \in K_n} (M_{B}^{(e)})_{G(e),H(e)}
$$
(where, of course, \(G(.)\) means the characteristic function of \(G\)). The matrix \(M\) was chosen so that
\begin{enumerate}
 \item $M_{1,1}=0$;
\item The skew Fourier-Walsh basis vectors
\begin{equation}
\label{eq:evecs}
\chi_{\emptyset} = \begin{pmatrix} 1 \\ 1 \end{pmatrix},\quad \chi_{\{e\}} = \begin{pmatrix} \sqrt{\frac{p}{1-p}} \\ -\sqrt{\frac{1-p}{p}}\end{pmatrix}
\end{equation}
are eigenvectors of \(M\).
\end{enumerate}
Note that these conditions determine $M$ uniquely up to multiplication by a scalar matrix. Together with the tensor product structure of \(M_{B}\), they guarantee that \(M_{B}\) has the respective properties of an OCC operator:
\begin{claim}
If $B$ is a bipartite graph, then the matrix $ M_{B}$ represents an OCC
operator when acting on functions by multiplying their vector representation from the left, i.e. by
\[(M_{B}f)(G) = \sum_{H \subset K_n} (M_{B})_{G,H}f(H).\]
For any graph $G \subset K_n$, the function $\chi_G$ is an eigenvector of \(M_{B}\) with eigenvalue
$$
\lambda_G = \left( -\frac{p}{1-p} \right)^{|G \cap \barB|} =
\left( -\frac{p}{1-p} \right)^{|G|} \left( -\frac{1-p}{p} \right)^{|G \cap B|}.
$$
\end{claim}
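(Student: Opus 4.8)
The plan is to verify the two announced properties of $M_B$ directly from the tensor-product structure, reducing everything to the $2\times 2$ matrix $M$.

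\textbf{Setup.} First I would observe that both conditions imposed on $M$ are satisfied: a direct computation shows $M\chi_\emptyset = \chi_\emptyset$ and $M\chi_{\{e\}} = -\tfrac{p}{1-p}\chi_{\{e\}}$, so $M$ has eigenvalues $1$ (eigenvector $\chi_\emptyset$) and $-\tfrac{p}{1-p}$ (eigenvector $\chi_{\{e\}}$), and clearly $M_{1,1}=0$. Next, recall that $\chi_R = \prod_{e\in R}\chi_e$ means that as a vector in $\bigotimes_{e\in K_n}\mathbb{R}^{\{0,1\}}$ we have $\chi_R = \bigotimes_{e\in K_n} u_e$, where $u_e = \chi_{\{e\}}$ (the $2$-vector) if $e\in R$ and $u_e = \chi_\emptyset$ if $e\notin R$. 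Since $M_B = \bigotimes_{e} M_B^{(e)}$ acts coordinatewise on tensors, we get
\[ M_B \chi_R = \bigotimes_{e\in K_n} \big(M_B^{(e)} u_e\big). \]
For $e\in B$ we have $M_B^{(e)} = I$, so that factor is unchanged. For $e\in\barB$ we have $M_B^{(e)} = M$; if $e\notin R$ then $M u_e = M\chi_\emptyset = \chi_\emptyset = u_e$, while if $e\in R$ then $M u_e = M\chi_{\{e\}} = -\tfrac{p}{1-p}\chi_{\{e\}} = -\tfrac{p}{1-p}u_e$. Collecting scalars, $M_B\chi_R = \big(-\tfrac{p}{1-p}\big)^{|R\cap\barB|}\chi_R$, which is exactly the claimed eigenvalue $\lambda_R$; the alternative form follows from $|R\cap\barB| = |R| - |R\cap B|$. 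This proves property 2, that the Fourier-Walsh basis diagonalises $M_B$, and identifies the spectrum.

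\textbf{The intersecting property.} For property 1, suppose $f$ is the indicator function of an odd-cycle-intersecting family $\cF$, and fix $G$ with $f(G)=1$, i.e. $G\in\cF$. By the explicit formula $(M_Bf)(G) = \sum_{H}(M_B)_{G,H}f(H) = \sum_{H\in\cF}\prod_{e\in K_n}(M_B^{(e)})_{G(e),H(e)}$. For the term indexed by $H$ to be nonzero, each factor must be nonzero: on edges $e\in B$ this forces $G(e)=H(e)$ (since $M_B^{(e)}=I$), and on edges $e\in\barB$ the only forbidden pattern is $G(e)=H(e)=1$, because $M_{1,1}=0$. Hence a nonzero term requires $H\cap B = G\cap B$ and $H$ to contain no edge of $\barB$ that $G$ also contains; in particular $G\cap H$ contains no edge of $\barB$, so $G\cap H\subseteq B$, which is bipartite, and therefore $G\cap H$ contains no odd cycle. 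Since $G\in\cF$ and $\cF$ is odd-cycle-intersecting, this is impossible for any $H\in\cF$, so every term in the sum vanishes and $(M_Bf)(G)=0$. This establishes that $M_B$ is an OCC operator.

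\textbf{Main obstacle.} The only genuinely delicate point is making the tensor-product bookkeeping precise — keeping straight that $\chi_R$ as a function on $\mathbb{Z}_2^{[n]^{(2)}}$ really is the tensor product of the single-coordinate vectors in the right order, and that $M_B = \bigotimes_e M_B^{(e)}$ acts as claimed on such products (this is just the mixed-product property $(\bigotimes A_e)(\bigotimes v_e) = \bigotimes (A_e v_e)$, but one should state it). Everything else is a routine $2\times 2$ eigenvector check together with the elementary observation that $M_{1,1}=0$ encodes exactly the constraint $(G\oplus\barB)\notin\cF$ used throughout the paper. No step presents a real difficulty once the indexing conventions are pinned down.
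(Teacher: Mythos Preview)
Your proof is correct and follows essentially the same approach as the paper: both verify the OCC property by observing that $(M_B)_{G,H}=0$ whenever $G\cap H$ has an edge in $\barB$ (which is forced if $G\cap H$ contains an odd cycle), and both obtain the eigenvalues by tensorising the single-edge eigenvector computation. Your write-up is in fact more explicit than the paper's, which simply says ``it follows by simple tensorization'' and leaves the coordinate-by-coordinate bookkeeping to the reader.
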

\begin{proof}
We need to show that if $\cF$ is odd-cycle-intersecting, then $\ip{f,M_{B}f} = 0$.
By linearity, it suffices to prove that for any $G,H \subset K_n$ with $G \cap H \cap \barB \neq \emptyset$, we have $(M_{B})_{G,H}=0$. Note that
$$
 (M_{B})_{G,H} = \prod_{e \in K_n} (M_{B}^{(e)})_{G(e),H(e)}.
$$
There exists $e \in \barB$ such that $G(e)=H(e)=1$; the corresponding multiplicand will be $M_{1,1}=0$, so $(M_{B})_{F,G}=0$, as required.

Note that the vectors (\ref{eq:evecs}) are simultaneously eigenvectors of \(M\) and \(I_{2 \times 2}\); the corresponding eigenvalues are \(1,-p/(1-p)\) (for \(M\)) and \(1,1\) (for \(I_{2 \times 2}\)). It follows by simple tensorization that for any graph $G \subset K_n$, the function $\chi_G$ is an eigenvector of \(M_{B}\) with eigenvalue
$$
\lambda_G = \left( -\frac{p}{1-p} \right)^{|G \cap \barB|} =
\left( -\frac{p}{1-p} \right)^{|G|} \left( -\frac{1-p}{p} \right)^{|G \cap B|}.
$$
\end{proof}

Note that \(M_{B}\) is the \(p\)-skew analogue of the operator \(A_{B}\) in the uniform case; indeed, when \(p = 1/2\), we have \(M_{0,0} = 0\), and therefore \(M_{B} = A_{B}\).

It is rather instructive to spend a moment studying the transpose \(M_{B}^{\top}\) of \(M_{B}\). By exactly the same argument
as above, whenever $f$ is the indicator-function of an odd-cycle-intersecting family, we have $\ip{M_{B}^{\top}f,f}=0$, as well as $\ip{f, M_{B}f}=0$ (although note that for \(p < 1/2\), it does not in general hold that $\ip{f,M_{B}g}=\ip{M_{B}^{\top}f,g}$.) Despite the fact that
the right eigenvectors of $M_{B}^{\top}$ (which are the left eigenvectors of $M_{B}$) are not the Fourier-Walsh basis, it turns out that the operator represented by $M_{B}^{\top}$ has an elegant
interpretation. For any two graphs $G$ and $H$, we define $G \oplus_p H$ not as a graph, but as a random graph,
formed as follows. Begin with the graph $G$. For every edge in $H$, if it is present in $G$ remove it,
and if it is absent from $G$ add it, independently at random with probability $\frac{p}{1-p}$ (here, we rely on
$p \le 1/2$). When $p=1/2$, the operation $\oplus_p$ degenerates into $\oplus$. Note that, as in the case
of $\oplus$, the distribution of $G \cap (G \oplus_p \barB)$ is supported on graphs contained in $B$.
We may lift the operation $(\cdot \oplus_p \barB)$ to an operator $N_B$:
$$
N_B f(G) = \Exp{[f(G \oplus_p \barB)]}.
$$
This operator is precisely $M_{B}^{\top}$. It has several nice properties. First and foremost, it is clear that when $f$ is the indicator function of an odd-cycle-intersecting family and $B$ is bipartite,
$$
f(G) = 1 \Rightarrow N_B f(G)=0.
$$
Secondly, it is a Markov operator representing a random walk on subgraphs of $K_n$, with stationary measure
$G(n,p)$, and the property that no two consecutive steps can intersect in an odd cycle.

\subsection{Engineering the eigenvalues for \texorpdfstring{$p < 1/2$}{p < 1/2}}
In this subsection, we construct an OCC operator with the necessary
spectrum for \(p \in [1/4,1/2)\), thus (almost) completing the proof of Theorem \ref{main}. In fact, in order to show that the constant $c_p$ in the stability part of Theorem \ref{main} is bounded if $p$ is bounded away from $0$, we will need to do this for a slightly extended interval. \remove{\dnote{can we remove the rest of the paragraph? it's a bit confusing.} If we do not want this extra `uniformity', we don't need the counterpart of $\Lambda^{(2)}$: there is an extra slackness that we can use to remove the extra tight graphs from the beginning. However, $\Lambda^{(2)}$ will be useful to obtain this uniformity.}

For the rest of this section, we assume that $p \in [\tau,1/2)$, where $\tau = 0.248$. The proof breaks down for slightly smaller $p$: the required inequality is violated by $3$-forests. However, as will be shown in section \ref{sec:smallp}, for $p$ in any closed sub-interval of $(0,1/4)$, Theorem \ref{main} follows from~\cite{ehud}.

We start by generalizing Lemma \ref{lemOCCSpectrum} and Corollary \ref{cor:qB}:
\begin{lem}\label{lemOCCSpectrum:p}
Let $\B$ be a distribution over bipartite graphs, and for every $B \in \B$ let $f_B$ be a real-valued
function whose domain is the set of subgraphs of $B$. Then
$$
\lambda_G = \left(-\frac{p}{1-p}\right)^{|G|} \Exp[f_B(B \cap G)]
$$
describes an OCC spectrum, where the expectation is over a random choice of $B$ from $\B$.
\end{lem}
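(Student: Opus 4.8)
The plan is to imitate the proof of Lemma~\ref{lemOCCSpectrum}, substituting the $p$-skew operator $M_B$ for the uniform operator $A_B$. The starting point is the claim just proved: for a fixed bipartite graph $B$, the matrix $M_B$ represents an OCC operator whose eigenvalue on $\chi_G$ is $\lambda_G = \left(-\tfrac{p}{1-p}\right)^{|G\cap \barB|}$, and one checks immediately (as in the remark following the claim, or by a direct computation with the skew characters) that this equals $\left(-\tfrac{p}{1-p}\right)^{|G|}\chi_B(G\cap B)$ up to the appropriate normalization — more precisely, $\left(-\tfrac{p}{1-p}\right)^{|G|}\left(-\tfrac{1-p}{p}\right)^{|G\cap B|}$. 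So $\lambda_G = \left(-\tfrac{p}{1-p}\right)^{|G|}\chi_B(G\cap B)$ is an OCC spectrum.

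Next I would make the key observation that the set of OCC spectra is a linear space (noted right after Definition~\ref{OCC} and reused throughout). For any subgraph $B'\subset B$, the function $(-1)^{|G\cap B|}$-type combination $\chi_{B'}(G\cap B) = \chi_{B'}(G\cap B')$ is, up to the scalar $\left(-\tfrac{p}{1-p}\right)^{|G|}$, of the same form, so $\lambda_G = \left(-\tfrac{p}{1-p}\right)^{|G|}\chi_{B'}(G\cap B)$ is again an OCC spectrum for every $B'\subset B$. (Strictly, one should verify that replacing the $\left(-\tfrac{1-p}{p}\right)^{|G\cap B|}$ factor attached to $B$ by the one attached to $B'$ still yields a genuine OCC spectrum; this follows because it is literally the eigenvalue list of $M_{B'\cup(B\setminus\emptyset)}$-type tensor products, i.e. of an operator built by the same tensor recipe with $I_{2\times2}$ on the edges of $B$ and $M$ elsewhere, whose action still annihilates the product of an odd-cycle-intersecting indicator with itself.) The characters $\{\chi_{B'} : B'\subset B\}$ form a basis for the space of all real functions on the subgraphs of $B$, so by linearity, for any real-valued $f_B$ on subgraphs of $B$ the vector $\lambda_G = \left(-\tfrac{p}{1-p}\right)^{|G|} f_B(G\cap B)$ is an OCC spectrum.

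Finally, since the OCC spectra form a linear space (and in fact a closed one, being a finite-dimensional space, so closed under averaging), taking the expectation over a random $B\sim\B$ of the spectra $\lambda_G = \left(-\tfrac{p}{1-p}\right)^{|G|} f_B(G\cap B)$ yields the OCC spectrum $\lambda_G = \left(-\tfrac{p}{1-p}\right)^{|G|}\Exp[f_B(B\cap G)]$, as claimed.

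The main obstacle I anticipate is the bookkeeping in the first two steps: unlike the $p=1/2$ case where $A_B$ is symmetric and $\chi_B(G\cap B) = \chi_B(G\cap B')$ cleanly, here the eigenvalue formula carries the asymmetric factor $\left(-\tfrac{1-p}{p}\right)^{|G\cap B|}$, and I must be careful that when I pass to a subgraph $B'$ I am still describing the spectrum of a legitimate OCC operator (namely one obtained by the tensor construction placing $M$ on the edges not in $B'$ and $I_{2\times2}$ on the edges of $B'$, restricted to those edges of $B$), rather than an arbitrary reweighting. Once that identification is nailed down, the linear-algebra and averaging steps are routine. I should also note that the operators themselves need not be symmetric for the argument — only the spectra (the eigenvalue lists with respect to the skew Fourier-Walsh basis) are what we combine linearly, and the annihilation property $f\cdot Af\equiv 0$ for odd-cycle-intersecting $f$ is preserved under arbitrary real linear combinations.
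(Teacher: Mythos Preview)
Your approach is the paper's approach---the paper's proof is literally ``Trivial generalization of the proof of Lemma~\ref{lemOCCSpectrum}''---but your execution has a bookkeeping error that you yourself flag as the ``main obstacle'' and then do not actually resolve.

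The issue is this: the eigenvalue of $M_B$ on $\chi_G$ is $\left(-\tfrac{p}{1-p}\right)^{|G|}\left(-\tfrac{1-p}{p}\right)^{|G\cap B|}$, and this is \emph{not} equal to $\left(-\tfrac{p}{1-p}\right)^{|G|}\chi_B(G\cap B)$ when $p\neq 1/2$. Your sentence ``So $\lambda_G = \left(-\tfrac{p}{1-p}\right)^{|G|}\chi_B(G\cap B)$ is an OCC spectrum'' is therefore unjustified at that point, and your later use of the Walsh characters $\chi_{B'}$ as the spanning set is hanging in the air: you have not shown that $\left(-\tfrac{p}{1-p}\right)^{|G|}\chi_{B'}(G\cap B)$ is an OCC spectrum. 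Your parenthetical attempt to fix this (``it is literally the eigenvalue list of $M_{B'\cup(B\setminus\emptyset)}$-type tensor products\ldots'') is garbled and does not do the job.

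The clean fix is simply to stay with the functions that the operators actually hand you. For each $B'\subset B$ (still bipartite), the spectrum of $M_{B'}$ is
\[
\lambda_G^{(B')}=\left(-\tfrac{p}{1-p}\right)^{|G|} g_{B'}(G\cap B),\qquad g_{B'}(H):=\left(-\tfrac{1-p}{p}\right)^{|H\cap B'|},
\]
using $G\cap B'=(G\cap B)\cap B'$. Now observe that the family $\{g_{B'}:B'\subset B\}$ spans all real functions on subgraphs of $B$: writing $c=-\tfrac{1-p}{p}\neq 1$, one has $g_{B'}=\bigotimes_{e\in B}\phi_e$ with $\phi_e(0)=1$, $\phi_e(1)=c^{[e\in B']}$, and the $2\times 2$ change-of-basis matrix $\begin{pmatrix}1&1\\1&c\end{pmatrix}$ is invertible. (Equivalently, these are exactly the $p$-skew characters on $\{0,1\}^B$, up to scaling.) Hence any $f_B$ is a linear combination of the $g_{B'}$, so $\left(-\tfrac{p}{1-p}\right)^{|G|}f_B(G\cap B)$ is an OCC spectrum; averaging over $B\sim\B$ finishes. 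This is what the ``trivial generalization'' amounts to---just carry the skew base $-\tfrac{1-p}{p}$ through instead of swapping in $-1$.
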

\begin{proof}
 Trivial generalization of the proof of Lemma~\ref{lemOCCSpectrum}.
\end{proof}

\begin{cor}
\label{cor:qB:p}
Let $(V_1,V_2)$ be a random bipartition of the vertices of $K_n$, where each vertex is chosen independently to
belong to each $V_i$ with probability 1/2. Let $B$ be the set of edges of \(K_n\) between $V_1$ and $V_2$. For any graph $G \subseteq K_n$, let
$$
q_i(G) = \Pr[ |G \cap B|= i ],
$$
and for any bipartite graph $R$, let
$$
q_R(G) = \Pr[ (G \cap B) \isomorphicto R],
$$
where all probabilities are over the choice of the
random bipartition. Then for any integer $i$,
$$
 \lambda_G = \left(-\frac{p}{1-p}\right)^{|G|} q_i(G)
$$
 is an OCC spectrum, and for any bipartite graph $R$,
$$
\left(-\frac{p}{1-p}\right)^{|G|} q_R(G)
$$
is an OCC spectrum.
\end{cor}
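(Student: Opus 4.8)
The plan is to obtain both assertions as immediate specializations of Lemma~\ref{lemOCCSpectrum:p}, in exact parallel with the way Corollary~\ref{cor:qB} is deduced from Lemma~\ref{lemOCCSpectrum} in the uniform case. First I would take $\mathcal{B}$ to be the distribution on bipartite subgraphs of $K_n$ induced by the random bipartition $(V_1,V_2)$ in the statement: $B$ is the complete bipartite graph with parts $V_1,V_2$, where each vertex is placed into $V_1$ or $V_2$ independently with probability $1/2$. Since every such $B$ is a cut of $K_n$, hence bipartite, $\mathcal{B}$ is a legitimate distribution over bipartite graphs, and so Lemma~\ref{lemOCCSpectrum:p} applies to any choice of the functions $f_B$ whose domain is the set of subgraphs of $B$.

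For the first claim, fix an integer $i$ and, for each bipartite graph $B$, let $f_B$ be the function on subgraphs of $B$ equal to $1$ on those subgraphs having exactly $i$ edges and $0$ otherwise. Then $f_B(B\cap G) = \mathbf{1}[\,|B\cap G| = i\,]$, so $\Exp_{B\sim\mathcal{B}}[f_B(B\cap G)] = \Pr[\,|G\cap B| = i\,] = q_i(G)$, and Lemma~\ref{lemOCCSpectrum:p} gives that
\[ \lambda_G = \left(-\frac{p}{1-p}\right)^{|G|} q_i(G) \]
is an OCC spectrum. For the second claim, fix a bipartite graph $R$ and let $f_B$ be the indicator on subgraphs of $B$ of being isomorphic to $R$; then $\Exp_{B}[f_B(B\cap G)] = q_R(G)$, and the lemma again yields that $\left(-\frac{p}{1-p}\right)^{|G|} q_R(G)$ is an OCC spectrum. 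Note that $B\cap G$ is always a subgraph of $B$, so in both cases $f_B$ does indeed have the required domain, which is the only hypothesis of Lemma~\ref{lemOCCSpectrum:p} that needs checking.

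I do not expect any genuine obstacle here: the corollary is a pure matter of plugging specific functions $f_B$ into the black box provided by Lemma~\ref{lemOCCSpectrum:p}, just as Corollary~\ref{cor:qB} plugs them into Lemma~\ref{lemOCCSpectrum}. Indeed, one could simply remark that the proof is word-for-word identical to that of Corollary~\ref{cor:qB}, with Lemma~\ref{lemOCCSpectrum:p} used in place of Lemma~\ref{lemOCCSpectrum} and the scalar $-p/(1-p)$ replacing $-1$; the only conceptual content — that $M_B$ (the $p$-skew analogue of $A_B$) annihilates indicator functions of odd-cycle-intersecting families and is diagonalized by the $p$-skewed Fourier–Walsh basis — has already been established.
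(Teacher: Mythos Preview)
Your proposal is correct and is exactly the approach the paper takes: the corollary is stated immediately after Lemma~\ref{lemOCCSpectrum:p} without a separate proof, since (as you do) one simply plugs the indicator functions $f_B = \mathbf{1}[\,|\cdot|=i\,]$ and $f_B = \mathbf{1}[\,\cdot\cong R\,]$ into that lemma, in direct parallel with the $p=1/2$ case.
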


Replacing `agreeing' with `intersecting', we have the following skewed analogue of Theorem \ref{ThmHoffman}:

\begin{thm}\label{skewedHoffman}
Let $\Lambda=(\lambda_G)_{G \subset K_n}$ be an OCC spectrum with \(\lambda_{\emptyset} = 1\), with minimal value
$\lambda_{\min}$ such that \(-1 < \lambda_{\min} < 0\), and with spectral gap $\gamma>0$.
Set $\nu = \frac{-\lambda_{\min}}{1-\lambda_{\min}}$ (so $\lambda_{\min}= \frac{-\nu}{1-\nu}$).
 Then for any odd-cycle-intersecting family $\cF$ of subgraphs of $K_n$, the following holds:
\begin{itemize}
 \item Upper bound: $\mu(\cF) \leq \nu$.
 \item Uniqueness: If $\mu(\cF) = \nu$, then
     $\widehat{\cF}(G)\not = 0$ only for $G \in \Lambda_{\min} \cup \{\emptyset\}$.
 \item Stability: Let $w = \sum_{G \not \in \Lambda_{\min} \cup
     \{\emptyset\}}\widehat{\cF}^2(G)$. Then $w \leq
     \frac{\nu}{(1-\nu)\gamma}(\nu-\mu(\cF)) .$
\end{itemize}
 \end{thm}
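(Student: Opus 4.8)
The plan is to reuse the argument for Theorem~\ref{ThmHoffman} essentially word for word, verifying that every step survives the passage from the uniform measure to $\mu_p$ and, more importantly, that nothing in it used self-adjointness of the operator. First I would fix an OCC operator $A$ realizing the given spectrum $\Lambda$ (such an $A$ exists by the definition of an OCC spectrum in the $p < 1/2$ setting), and let $f$ be the characteristic function of $\cF$. Because $\cF$ is odd-cycle-\emph{intersecting}, property~(1) of an OCC operator gives $f(G) = 1 \Rightarrow Af(G) = 0$, so $f \cdot Af \equiv 0$ pointwise, and in particular $\ip{f, Af}_p = 0$. Expanding $f = \sum_G \widehat f(G)\chi_G$ in the $p$-skewed Fourier--Walsh basis and using $A\chi_G = \lambda_G \chi_G$ together with the orthonormality of $\{\chi_G\}$ with respect to $\ip{\cdot,\cdot}_p$, this reads
\[ 0 = \ip{f, Af}_p = \sum_{G \subset K_n} \lambda_G \widehat f(G)^2 . \]

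From here the computation is identical to the uniform case. The skewed Parseval identity and Booleanness of $f$ give $\widehat f(\emptyset) = \mu_p(\cF) = \sum_G \widehat f(G)^2$, so with $w = \sum_{G \notin \Lambda_{\min} \cup \{\emptyset\}} \widehat f(G)^2$ one has $\sum_{G \in \Lambda_{\min}} \widehat f(G)^2 = \mu_p(\cF) - \mu_p(\cF)^2 - w$. Substituting into the display, using $\lambda_\emptyset = 1$, the spectral-gap bound $\lambda_G \geq \lambda_{\min} + \gamma$ off $\Lambda_{\min}\cup\{\emptyset\}$, and $\lambda_{\min} = -\nu/(1-\nu)$, and rearranging exactly as in the proof of Theorem~\ref{ThmHoffman}, I would obtain
\[ \mu_p(\cF)^2 - \nu\,\mu_p(\cF) + \gamma(1-\nu)w \geq 0 . \]
Since $-1 < \lambda_{\min} < 0$ forces $\nu \in (0, 1/2)$, we have $1 - \nu > 0$; as $\gamma > 0$ this yields $\mu_p(\cF) \leq \nu$, with equality possible only if $w = 0$ (uniqueness). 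For stability, because $\mu_p(\cF) \leq \nu$ I may replace the term $\gamma(1-\nu)w$ by the smaller quantity $\gamma(1-\nu)w\,\mu_p(\cF)/\nu$, and then cancelling $\mu_p(\cF)$ and rearranging gives $w \leq \frac{\nu}{(1-\nu)\gamma}\big(\nu - \mu_p(\cF)\big)$.

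The one place that deserves attention, and the only real difference from the $p = 1/2$ case, is that for $p < 1/2$ an OCC operator such as $M_B$ is in general \emph{not} self-adjoint with respect to $\ip{\cdot,\cdot}_p$ (indeed $\ip{f, Ag}_p \neq \ip{A^\top f, g}_p$ in general). I would stress that self-adjointness is never used: the argument relies only on (i) the \emph{pointwise} identity $f\cdot Af \equiv 0$ --- which is precisely where the hypothesis of being odd-cycle-intersecting, rather than merely agreeing, is needed --- and (ii) the fact that $A$ is diagonalised by an \emph{orthonormal} basis with real eigenvalues, which is built into the definition of an OCC operator and supplied in concrete cases by Lemma~\ref{lemOCCSpectrum:p} and Corollary~\ref{cor:qB:p}. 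These two facts are exactly what make $\ip{f, Af}_p = \sum_G \lambda_G \widehat f(G)^2$ hold, after which the remainder is elementary algebra. I do not anticipate a genuine obstacle here; all of the substantive work has already been done in setting up the skewed Fourier analysis and the operators $M_B$.
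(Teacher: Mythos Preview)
Your proposal is correct and is precisely the approach the paper intends: the paper does not give a separate proof of Theorem~\ref{skewedHoffman}, simply presenting it as the $\mu_p$-analogue of Theorem~\ref{ThmHoffman}, and your argument reproduces that proof verbatim in the skewed setting. Your emphasis that only the pointwise identity $f\cdot Af\equiv 0$ and the orthonormality of the eigenbasis are used---so that self-adjointness is irrelevant---is exactly the point that makes the transfer go through.
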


Similarly, we have the following analogue of Corollary~\ref{corMain}:
\begin{cor}\label{corMain:p}
Let \(p \in (0,1)\). Suppose that there exists an OCC spectrum $\Lambda$ with eigenvalues $\lambda_{\emptyset}=1$,
$\lambda_{\min}=-p^3/(1-p^3)$ and spectral gap $\gamma >0$. Assume that all
graphs in $\Lambda_{\min}$ (the set of graphs $G$ for which
$\lambda_G=\lambda_{\min}$) have at most $3$ edges. Then if $\cF$ is
an odd-cycle-intersecting family of subgraphs of $K_n$, the following holds:
\begin{itemize}
\item Upper bound: $\mu_{p}(\cF) \leq p^3$.
 \item Uniqueness: If $\mu_{p}(\cF) = p^3$, then $\cF$ is a $\trium$.
 \item Stability: If $\mu_{p}(\cF) > p^3 - \epsilon$, then there exists a $\trium$ $\cT$ such that $\mu_{p}(\cF \Delta \cT) = O_p\left(\epsilon\right)$.
 %\item Stability: If $\mu_{p}(\cF) > p^3 - \epsilon$, then there exists a $\trium$ $\cT$ such that $\mu_{p}(\cF \Delta \cT) = O_p\left(\frac{p^3}{(1-p^3)\gamma}\epsilon\right)$.
\end{itemize}
\end{cor}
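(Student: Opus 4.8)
The plan is to mirror the proof of Corollary~\ref{corMain} step by step, substituting the $p$-skewed inputs for the uniform ones. The starting point is the identity $\nu = \frac{-\lambda_{\min}}{1-\lambda_{\min}} = p^3$, which follows immediately from the hypothesis $\lambda_{\min} = -p^3/(1-p^3)$ (so that $1-\lambda_{\min} = 1/(1-p^3)$); note that $-1 < \lambda_{\min} < 0$ in the relevant range $p \le 1/2$, so Theorem~\ref{skewedHoffman} applies with an OCC operator realising the given spectrum. The upper bound $\mu_p(\cF) \le p^3$ is then immediate, and the uniqueness clause of Theorem~\ref{skewedHoffman} tells us that if $\mu_p(\cF) = p^3$ then $\widehat{\cF}(G) = 0$ for $G \notin \Lambda_{\min}\cup\{\emptyset\}$; since by hypothesis all tight graphs have at most three edges, the skewed Fourier expansion of $\cF$ is supported on graphs with at most three edges.

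For uniqueness I would first reduce to the monotone case: the up-filter $\cF^{\uparrow}$ of an odd-cycle-intersecting family is again odd-cycle-intersecting (enlarging both graphs only enlarges the intersection), and $\mu_p(\cF^{\uparrow}) > \mu_p(\cF)$ unless $\cF$ is already an up-set, so an extremal $\cF$ must be an up-set. (This is what lets us bypass the monotonisation machinery needed for the $p=1/2$ agreeing case.) Then $\cF$, viewed as a monotone Boolean function on $\{0,1\}^{[n]^{(2)}}$ with $\E_p\cF = p^3$ and Fourier expansion supported on sets of size at most $3$, is a $3$-umvirate by Lemma~\ref{lem:ehud} (applied with $t = 3$, $p \le 1/2$); i.e.\ $\cF$ is the family of all graphs containing three fixed edges. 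If those edges did not form a triangle they would form a bipartite graph $B$; then $B \in \cF$, yet $B \cap B = B$ is bipartite and contains no odd cycle, contradicting odd-cycle-intersection. Hence the three edges form a triangle and $\cF$ is a $\trium$.

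For stability, the third clause of Theorem~\ref{skewedHoffman} gives $w := \sum_{G \notin \Lambda_{\min}\cup\{\emptyset\}} \widehat{\cF}^2(G) \le \frac{p^3}{(1-p^3)\gamma}(p^3 - \mu_p(\cF)) = O_p(\epsilon)$, i.e.\ the skewed Fourier weight of $\cF$ above level $3$ is $O_p(\epsilon)$. I would then apply the $p$-skewed Kindler--Safra theorem (the version of~\cite{kindler-safra} used in~\cite{ehud}, whose parameters and approximation constant depend only on $t$ and on $p$ and stay bounded for $p$ bounded away from $0$) with $t = 3$, producing a Boolean function $\cG$ depending on a set $A$ of at most $T_0 = T_0(p)$ edges with $\mu_p(\cF \Delta \cG) = O_p(\epsilon)$. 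As in the proof of Corollary~\ref{corMain}, a short counting argument shows that once $\mu_p(\cF \Delta \cG)$ drops below a threshold depending only on $p$ and $|A|$, the family $\cG$ must itself be odd-cycle-intersecting. There are only finitely many odd-cycle-intersecting families on a fixed set of at most $T_0$ edges, and by the uniqueness statement already proved each one that is not a $\trium$ has measure at most $p^3 - \delta_p$ for some $\delta_p > 0$. Hence, for $\epsilon$ below a threshold $\epsilon_p$, $\cG$ must be a $\trium$ $\cT$, so $\mu_p(\cF \Delta \cT) \le \mu_p(\cF\Delta\cG) = O_p(\epsilon)$; for $\epsilon \ge \epsilon_p$ one uses the trivial bound $\mu_p(\cF \Delta \cT) \le 1 \le \epsilon/\epsilon_p$, enlarging the implied constant if necessary.

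Given the OCC spectrum promised by the hypothesis, this argument is essentially bookkeeping: it reduces entirely to the three imported tools (Theorem~\ref{skewedHoffman}, Lemma~\ref{lem:ehud}, and Kindler--Safra). The one point requiring genuine care is tracking the $p$-dependence of the Kindler--Safra parameters and of the constants $\delta_p,\epsilon_p$ above, so as to guarantee that the resulting constant $c_p$ stays bounded whenever $p$ is bounded away from $0$ --- which is exactly why the conclusion is phrased with $O_p(\cdot)$. The genuinely substantial task, namely constructing an OCC spectrum meeting the hypotheses of this corollary (with $\lambda_{\min} = -p^3/(1-p^3)$, spectral gap $\gamma > 0$, and all tight graphs having at most three edges) for every $p \in [\tau,1/2)$, is carried out separately, and that is where the real work lies.
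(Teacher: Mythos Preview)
Your overall strategy is correct and matches the paper's: skewed Hoffman for the upper bound, reduction to up-sets plus Lemma~\ref{lem:ehud} for uniqueness, and Kindler--Safra for stability. There is, however, one step in the stability argument that you gloss over and that the paper explicitly flags as needing a ``small twist''.

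You write that ``as in the proof of Corollary~\ref{corMain}, a short counting argument'' shows that $\cG$ must be odd-cycle-intersecting once $\mu_p(\cF\Delta\cG)$ is below a threshold depending only on $p$ and $|A|$. But the argument in Corollary~\ref{corMain} was for the odd-cycle-\emph{agreeing} case at $p=1/2$: it paired each $J\subset\overline{A}$ with its complement to force $\mu(\cF_1)+\mu(\cF_2)\le 2^{-|A|}$. Neither ingredient survives here: for $p<1/2$ the map $J\mapsto \overline{A}\setminus J$ is not $\mu_p$-preserving, and in the \emph{intersecting} (not agreeing) setting the fibers are merely cross-intersecting rather than complementary.

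The paper's route is different. If $G_1,G_2\in\cG$ have $G_1\cap G_2$ bipartite, then the fibers $\cF_i=\{J\subset\overline{A}:G_i\cup J\in\cF\}$ are cross-intersecting, since $(G_1\cup J_1)\cap(G_2\cup J_2)=(G_1\cap G_2)\cup(J_1\cap J_2)$ must contain an odd cycle and $G_1\cap G_2$ does not. The paper then invokes the cross-intersecting Hoffman bound from~\cite{EFP,ehud} to get $\sqrt{\mu_p(\cF_1)\mu_p(\cF_2)}\le p$, hence $\mu_p(\cF_i)\le p$ for some $i$, yielding $\mu_p(\cF\Delta\cG)\ge \mu_p^A(G_i)(1-p)\ge p^{T_0}(1-p)$, a threshold independent of $n$. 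This is a spectral input, not a counting one.

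Your approach can be made to work elementarily, but only if you first pass to the up-filter $\cF^{\uparrow}$ (as you already do for uniqueness): this costs at most $\mu_p(\cF^{\uparrow})-\mu_p(\cF)\le p^3-(p^3-\epsilon)=\epsilon$, the fibers $\cF^{\uparrow}_i$ become up-sets, the complement pairing gives $\mu_{1-p}(\cF^{\uparrow}_1)+\mu_p(\cF^{\uparrow}_2)\le 1$, and since $\mu_p\le\mu_{1-p}$ on up-sets for $p\le 1/2$ one obtains $\min_i\mu_p(\cF^{\uparrow}_i)\le 1/2$. Either way, the direct appeal to Corollary~\ref{corMain} does not go through as written.
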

\begin{proof}
 Follows from Theorem \ref{skewedHoffman} much as Corollary \ref{corMain} follows from Theorem \ref{ThmHoffman}, with a small twist. The twist involves the proof of the stability part. Using Kindler-Safra, we construct a family $\cG$ depending on a set $A$ of at most $T_0$ coordinates which is $O_p(\epsilon)$-close to $\cF$. We can conclude stability (with the same proof as in the original corollary) if we can show that for $\epsilon$ small enough, $\cG$ must be odd-cycle-intersecting; the required bound on $\epsilon$ should depend only on $p$.

 Suppose $\cG$ isn't odd-cycle-intersecting. So there exist two graphs $G_1,G_2 \in \cG$ supported on $A$ which aren't odd-cycle-intersecting. For $i=1,2$, let $\cF_i = \{ J \subseteq \overline{A} : G_i \cup J \in \cF \}$. Since $\cF$ is odd-cycle-intersecting, the families $\cF_1,\cF_2$ must be cross-intersecting: any graph in $\cF_1$ intersects any graph in $\cF_2$. Using the cross-intersecting variant of Hoffman's bound~\cite[Theorem 13]{EFP}, the method of~\cite{ehud} shows that $\sqrt{\mu_p(\cF_1)\mu_p(\cF_2)} \leq p$. Therefore (without loss of generality) $\mu_p(\cF_1) \leq p$. This implies that $\mu_p(\cF \Delta \cG) \geq \mu_p(G_1)(1-p) \geq p^{|A|} (1-p) \geq p^{T_0} (1-p)$, where $\mu_p(G_1)$ is taken with respect to the edge set $A$. If $\epsilon$ is small enough, this contradicts the assumption that $\cG$ is $O_p(\epsilon)$-close to $\cF$.
\end{proof}
Our goal in this subsection is to exhibit an OCC spectrum satisfying the conditions of Corollary \ref{corMain:p}, for \(p \in (0,1/2)\). In section~\ref{sectionCutStatistics}, we explained how to choose $c_0,c_1,c_2,c_3,c_4 \in \mathbb{R}$ so that the OCC spectrum
\[ \lambda_G = \left(-\frac{p}{1-p}\right)^{|G|} \left[ c_0 q_0(G) + c_1 q_1(G) + c_2 q_2(G) + c_3 q_3(G) + c_4 q_4(G) \right]\]
satisfied the requirements of Corollary~\ref{corMain}. For general $p \in (0,1]$, the same calculations give the following constraints:
\begin{align*}
 c_0 &= 1, \\
 c_1 &= \frac{p^2-p-1}{p^2+p+1}, \\
 c_2 &= \frac{p^2-3p+1}{p^2+p+1},
\end{align*}
\[\frac{5p^2-27p+45-16/p}{p^2+p+1} \leq 4c_3 + c_4 \leq \frac{5p^2-27p+45-32/p+8/p^2}{p^2+p+1}. \]
When $p = 1/2$, the two bounds on $4c_3 + c_4$ coincide. When $p > 1/2$, they contradict one another, so the method fails. When $p < 1/2$, there is a gap, and choosing any value inside the gap, we get a spectrum which is \emph{not} tight on either $4$-forests or $K_4^-$. As before, we choose $c_4 = 0$. A judicious choice of $c_3$ is:
\[ c_3 = \frac{5p^2-27p+45-28/p+6/p^2}{4(p^2+p+1)};\]
this choice guarantees that $c_3 > 0$ for all \(p \in (0,1/2]\).

We are now ready to state the main claim of this section:
\begin{claim}\label{goodOCC1:p}
Let $\Lambda^{(1)}$ be the OCC spectrum described by
$$
\lambda^{(1)}_G = \left(-\frac{p}{1-p}\right)^{|G|} \left[ q_0(G) + c_1 q_1(G) + c_2 q_2(G) + c_3 q_3(G)\right],
$$
where $c_1,c_2,c_3$ are given by
\begin{align*}
 c_1 &= \frac{p^2-p-1}{p^2+p+1}, \\
 c_2 &= \frac{p^2-3p+1}{p^2+p+1}, \\
 c_3 &= \frac{5p^2-27p+45-28/p+6/p^2}{4(p^2+p+1)}.
\end{align*}
Then there exists $\gamma' > 0$ not depending on $n$ or $p$ such that
\begin{itemize}
\item $\lambda^{(1)}_\emptyset = 1$.
\item $\lambda^{(1)}_{\min}=-p^3/(1-p^3)$.
\item $\Lambda^{(1)}_{\min}$ consists of the following graphs: a single edge, a path of length two, two disjoint edges, and a triangle.
\item For all $H \not \in \Lambda_{\min} \cup \mathcal{F}_4 \cup \{ K_4^- \}$, we have $\lambda^{(1)}_H \ge -p^3/(1-p^3) + \gamma'$, where $\mathcal{F}_4$ denotes the set of $4$-forests.
\end{itemize}
\end{claim}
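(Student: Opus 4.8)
Throughout, write $\rho = p/(1-p)$, so that $\rho \in (0,1)$ for $p < 1/2$ and $\rho = 1$ at $p = 1/2$; from the displayed formula, $\lambda^{(1)}_G = (-1)^{|G|}\rho^{|G|}f(G)$ where $f(G) = q_0(G) + c_1 q_1(G) + c_2 q_2(G) + c_3 q_3(G)$. The first three bullets are the routine part. That $\lambda^{(1)}_\emptyset = 1$ is immediate. For each of the four graphs claimed to lie in $\Lambda^{(1)}_{\min}$ --- a single edge, a path of length two, two disjoint edges, and a triangle --- one has $q_3 = 0$, so the value of $\lambda^{(1)}$ involves only $c_0 = 1$, $c_1$, $c_2$; plugging in the ($p$-independent) cut distributions and the defining formulas for $c_1,c_2$, and using $(1-p)(p^2+p+1) = 1-p^3$, one checks that each equals $-p^3/(1-p^3)$. (The two constraints ``tight on $\wedge$'' and ``tight on $\triangle$'' are exactly what force the stated $c_1$ and $c_2$.) To see that $4$-forests and $K_4^-$ lie strictly above $\lambda^{(1)}_{\min}$ --- but with a gap that is not uniform in $p$, which is why they are excluded from the fourth bullet --- write $\lambda^{(1)}_{F_4}$ and $\lambda^{(1)}_{K_4^-}$ as affine functions of $c_3$: the first has positive slope (since $|F_4| = 4$ is even), the second negative slope (since $|K_4^-| = 5$ is odd), and the chosen $c_3$ lies strictly between the value making $F_4$ tight and the value making $K_4^-$ tight, the two gaps being $\frac{6(1-2p)}{4p^2(p^2+p+1)}$ and $\frac{2(1-2p)}{4p^2(p^2+p+1)}$ respectively. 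These two families are then handled, as in the $p = 1/2$ case, by adding a multiple of the appropriate analogue of $\Lambda^{(2)}$.

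The fourth bullet is the substance, and the plan is to rerun the proof of Claim~\ref{goodOCC1} with $p$-dependent coefficients. Using the split of $G$ into its blocks and equation~(\ref{eq:pgfexp}), let $m$ be the number of bridges of $G$, $H$ the union of its biconnected components, and $(a_i)_{i \geq 0}$ the cut distribution of $H$; then $a_1 = 0$, $\sum_i a_i = 1$, $a_2 \leq 3/4$ by Lemma~\ref{lemCutStatistics}(5), $a_0 \leq 1/4$ unless $H = \emptyset$ by Lemma~\ref{lem:trivialgraphics}(3), $|G| = m + |H|$, and $|H| \geq 3$ whenever $H \neq \emptyset$, and
\[\lambda^{(1)}_G = (-1)^{|G|}\left(\tfrac{\rho}{2}\right)^{m}\rho^{|H|}\left[\left(1 + c_1 m + c_2\binom{m}{2} + c_3\binom{m}{3}\right)a_0 + (c_2 + c_3 m)a_2 + c_3 a_3\right].\]
I would then split on the parity of $|G|$ and treat $m = 0,1,2,3,4$ and $m \geq 5$ in turn; inside the case $m = 0$, where $G = H$ is bridgeless, I would dispatch $\triangle$, $C_4$, $C_5$ and $K_4^-$ by their explicit cut distributions and bound everything else using $q_0(G) \leq 1/16$ (Lemma~\ref{lem:trivialgraphics}(2)). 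The one new phenomenon relative to $p = 1/2$ is that one must spend the factor $\rho^{|H|}$ with $\rho < 1$: multiplying a negative lower bound for $f(G)$ by $\rho^{|G|} \leq \rho^{m+3}$ makes it less negative, and this is exactly what rescues the borderline cases (e.g. $m = 2$ with $H \neq \emptyset$, where $f(G)$ itself can dip below $\lambda^{(1)}_{\min}$). In every case the bound is a rational function of $p$, and clearing denominators turns the required estimate into a polynomial inequality in $p$ on $[\tau, 1/2]$, with $\tau = 0.248$.

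The last step --- and the genuine obstacle --- is the uniformity bookkeeping: one must verify that each of this finite list of polynomial inequalities holds on all of $[\tau, 1/2]$ with a strictly positive margin, and set $\gamma'$ to the least margin. This is where the constants are forced. The binding constraint at the low end is the $3$-forest ($m = 3$, $H = \emptyset$): here $\lambda^{(1)}_G \geq -p^3/(1-p^3)$ is equivalent, after simplification, to $p^4 - 7p^3 + 17p^2 - 28p + 6 \leq 0$, which fails just below $p \approx 0.248$ --- hence the choice of $\tau$ --- and leaves only a sliver of room at $p = \tau$, so that $\gamma'$ must be taken small (but positive, and independent of $n$ and $p$). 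At the high end the tightest constraints are the $4$-forests and $K_4^-$, whose gap to $\lambda^{(1)}_{\min}$ shrinks to $0$ as $p \to 1/2$, which is precisely why they are excluded from the statement. The ``large'' cases ($m \geq 5$, or biconnected components with many edges) cause no trouble and are uniform in $n$ and $p$: there $(\rho/2)^m \leq 2^{-m}$ (or the exponential smallness of $a_0$) beats the polynomial $1 + c_1 m + c_2\binom{m}{2} + c_3\binom{m}{3}$, whose coefficients are bounded once $p$ is bounded away from $0$, so $|\lambda^{(1)}_G|$ falls well below $|\lambda^{(1)}_{\min}| = p^3/(1-p^3) \geq \tau^3/(1-\tau^3)$. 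The real difficulty, then, is not conceptual but arithmetical: unlike at $p = 1/2$, where several of these inequalities held by a numerical coincidence, here each one must be checked as an honest polynomial inequality valid across the whole interval, and one must confirm that the single judicious choice of $c_3$ --- strictly inside the feasible gap between the $F_4$- and $K_4^-$-tight values --- leaves room everywhere except near the two endpoints where those excluded families become tight.
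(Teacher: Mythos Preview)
Your proposal is correct and follows essentially the same route as the paper: block decomposition via equation~(\ref{eq:pgfexp}), split on the parity of $|G|$, case on the number of bridges $m$, reduce every case to a rational/polynomial inequality in $p$ on the closed interval $[\tau,1/2]$, and extract a uniform $\gamma'$ by compactness once the two non-uniform families $F_4$ and $K_4^-$ are excluded. The one tactical difference is the cutoff for ``large $m$'': you suggest $m \geq 5$ (as in the $p=1/2$ proof), whereas with the $p$-dependent coefficients the paper finds it cleanest to push to $m \geq 10$ (showing inductively that $2^m d_0(m)$ increases for $m \geq 7$ and checking $d_0(10) > 0$), then handle the even forests with $m < 10$ and the non-forests with $m < 10$ separately. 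Your $\rho^{|H|}$ bookkeeping is a slight repackaging of the same estimates; the paper instead bounds $\rho^{|G|}$ by $\rho^k$ for the smallest admissible $k$ in each subcase. Either way, the finitely many resulting polynomial inequalities are what need to be verified (the paper does this via Sturm chains), and you have correctly identified the two binding endpoints: the $3$-forest at $p = \tau$ and $F_4,K_4^-$ at $p = 1/2$.
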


Before proving Claim \ref{goodOCC1:p}, we show that it implies Theorem \ref{main}. We have the following analogue of Claim \ref{goodOCC2}:
\begin{claim}\label{goodOCC2:p}
Let $\Lambda^{(2)}$ be the OCC spectrum described by
$$
\lambda^{(2)}_G = \left(-\frac{p}{1-p}\right)^{|G|} \left[\sum_{F \in \mathcal{F}_{4}} q_F(G) - q_{\Box}(G)\right],
$$
where $\Box$ denotes $C_4$.
Then \begin{enumerate}
\item $\lambda^{(2)}_H = 0$ for all $H$ with less than 4 edges.
\item $\lambda^{(2)}_F= 2^{-4} p^4/(1-p)^4$ for all 4-forests \(F\).
\item $\lambda^{(2)}_{K_4^-}=2^{-3} p^5/(1-p)^5$.
\item $|\lambda^{(2)}_G| \le 1$ for all $G$.
\end{enumerate}
\end{claim}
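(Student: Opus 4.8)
The statement to prove is Claim~\ref{goodOCC2:p}, the skew-$p$ analogue of Claim~\ref{goodOCC2}. The plan is to follow the proof of Claim~\ref{goodOCC2} essentially verbatim, inserting the factor $\left(-\frac{p}{1-p}\right)^{|G|}$ in place of $(-1)^{|G|}$ and tracking how the probabilities $q_F$ and $q_\Box$ combine with it. By Lemma~\ref{lemOCCSpectrum:p} (equivalently Corollary~\ref{cor:qB:p}), each $\left(-\frac{p}{1-p}\right)^{|G|} q_R(G)$ is an OCC spectrum, and OCC spectra form a linear space, so $\Lambda^{(2)}$ as defined is indeed an OCC spectrum; it remains only to compute its values on the relevant graphs.

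First I would dispose of item~1: if $|H| < 4$, then every cut of $H$ has at most $3$ edges, so no cut of $H$ can be isomorphic to a $4$-forest or to $C_4$, whence $q_F(H) = q_\Box(H) = 0$ for all $F \in \mathcal{F}_4$ and the bracketed expression vanishes. For item~2, let $F$ be a $4$-forest. As noted in the proof of Claim~\ref{goodOCC2} (and expanded in Section~\ref{sectionCutStatistics}), in a forest each edge lies in a uniform random cut independently of the others, so the probability that the random cut equals $F$ itself (all four edges cut) is $2^{-4}$; moreover $q_{F'}(F) = 0$ for any distinct $4$-forest $F'$, and $q_\Box(F) = 0$ since $F$ contains no $C_4$. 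Hence the bracket evaluates to $2^{-4}$, and since $|F| = 4$, $\lambda^{(2)}_F = \left(-\frac{p}{1-p}\right)^4 \cdot 2^{-4} = 2^{-4} p^4/(1-p)^4$. For item~3, label the vertices of $K_4^-$ as $a,b,c,d$ with $a,c$ the degree-$3$ vertices; a random cut is isomorphic to $C_4$ exactly when $\{a,c\}$ and $\{b,d\}$ are the two colour classes, which occurs with probability $2 \cdot 2^{-4} = 1/8$, and $q_F(K_4^-) = 0$ for every $4$-forest since $K_4^-$ contains no $4$-forest as a subgraph (it has only $4$ vertices but a spanning forest on $4$ vertices with $4$ edges is impossible). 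So the bracket equals $-1/8$, and since $|K_4^-| = 5$, $\lambda^{(2)}_{K_4^-} = \left(-\frac{p}{1-p}\right)^5 \cdot (-1/8) = 2^{-3} p^5/(1-p)^5$.

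For item~4, the bracket $\sum_{F \in \mathcal{F}_4} q_F(G) - q_\Box(G)$ is a difference of two quantities each in $[0,1]$ — indeed $\sum_{F \in \mathcal{F}_4} q_F(G) = \Pr[\,G \cap B \text{ is a } 4\text{-forest}\,] \le 1$ and $q_\Box(G) = \Pr[\,G \cap B \cong C_4\,] \le 1$, and these two events are disjoint so in fact the bracket lies in $[-1,1]$ — hence has absolute value at most $1$. Since $p \le 1/2$ gives $\frac{p}{1-p} \le 1$, we get $|\lambda^{(2)}_G| = \left(\frac{p}{1-p}\right)^{|G|} \cdot \bigl|\text{bracket}\bigr| \le 1$ for all $G$, as claimed.

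There is no serious obstacle here: the only point requiring a moment's care is the bookkeeping of the sign/scaling factor $\left(-\frac{p}{1-p}\right)^{|G|}$ evaluated at $|G| = 4$ (giving $+(p/(1-p))^4$) and at $|G| = 5$ (giving $-(p/(1-p))^5$, which combines with the $-q_\Box$ to yield a positive eigenvalue), together with the elementary observation that $K_4^-$ has too few vertices to contain a spanning $4$-forest. Everything else is identical to the uniform case.
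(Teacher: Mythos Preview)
Your proof is correct and follows exactly the paper's approach: the paper simply states that the proof is ``same as the proof of Claim~\ref{goodOCC2}, using the fact that $|p/(1-p)| \leq 1$ to prove the last item,'' and you have spelled this out faithfully with the correct sign/scaling bookkeeping.
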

\begin{proof}
 Same as the proof of Claim \ref{goodOCC2}, using the fact that $|p/(1-p)| \leq 1$ to prove the last item.
\end{proof}
We have the following analogue of Corollary \ref{cor:OCCmain}:

\begin{cor} \label{cor:OCCmain:p}
Let $\Lambda = \Lambda_1 + \frac{16}{17}\gamma'\Lambda_2$. Then $\Lambda$ is
an OCC spectrum as described in Corollary \ref{corMain:p}:
\begin{itemize}
\item $\lambda_\emptyset =1$.
\item $\lambda_G = -p^3/(1-p^3)$ for all non-empty subgraphs $G$ of
    $K_3$ (and for the graph consisting of two disjoint edges).
\item Letting $\gamma = \frac{p^4}{17(1-p)^4} \gamma' \geq \frac{\tau^4}{17(1-\tau)^4}$ gives
 $ \lambda_G \ge -\frac{p^3}{1-p^3} + \gamma$ whenever $|G| > 3$.
\end{itemize}
\end{cor}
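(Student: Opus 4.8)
The plan is to mirror the proof of Corollary~\ref{cor:OCCmain} line for line, with $\Lambda^{(1)},\Lambda^{(2)}$ now denoting the OCC spectra of Claims~\ref{goodOCC1:p} and~\ref{goodOCC2:p}. Since the OCC spectra form a linear space and $\Lambda^{(1)},\Lambda^{(2)}$ are OCC spectra, $\Lambda = \Lambda^{(1)} + \tfrac{16}{17}\gamma'\Lambda^{(2)}$ is one as well; abbreviate $\lambda_{\min} := -p^3/(1-p^3)$. The first bullet is immediate: $\lambda^{(1)}_\emptyset = 1$ by Claim~\ref{goodOCC1:p} and $\lambda^{(2)}_\emptyset = 0$ by Claim~\ref{goodOCC2:p}(1) (as $\emptyset$ has fewer than four edges), so $\lambda_\emptyset = 1$. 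For the second bullet, each nonempty subgraph of $K_3$ and the two-disjoint-edges graph lies in $\Lambda^{(1)}_{\min}$, so has $\lambda^{(1)}_G = \lambda_{\min}$; all of these have at most three edges, so $\lambda^{(2)}_G = 0$ by Claim~\ref{goodOCC2:p}(1), and hence $\lambda_G = \lambda_{\min}$.

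For the third bullet I would split the graphs $G$ with $|G| > 3$ into three cases. If $G$ is neither a $4$-forest nor $K_4^-$, then (noting $\Lambda^{(1)}_{\min}$ contains only graphs with at most three edges, so $G\notin\Lambda^{(1)}_{\min}$) Claim~\ref{goodOCC1:p} gives $\lambda^{(1)}_G \geq \lambda_{\min} + \gamma'$, while $|\lambda^{(2)}_G| \le 1$ by Claim~\ref{goodOCC2:p}(4), so
\[\lambda_G \;\geq\; \lambda_{\min} + \gamma' - \tfrac{16}{17}\gamma' \;=\; \lambda_{\min} + \tfrac{1}{17}\gamma' \;\geq\; \lambda_{\min} + \gamma,\]
where the last step uses $p/(1-p)\le 1$ (valid for $p\le 1/2$), which gives $\tfrac{p^4}{(1-p)^4}\le 1$ and hence $\gamma \le \tfrac{1}{17}\gamma'$. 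If $G$ is a $4$-forest, then $\lambda^{(1)}_G \geq \lambda_{\min}$ (as $\lambda_{\min}$ is the least eigenvalue of $\Lambda^{(1)}$) and $\lambda^{(2)}_G = 2^{-4}p^4/(1-p)^4$ by Claim~\ref{goodOCC2:p}(2), so
\[\lambda_G \;\geq\; \lambda_{\min} + \tfrac{16}{17}\gamma'\cdot 2^{-4}\tfrac{p^4}{(1-p)^4} \;=\; \lambda_{\min} + \tfrac{p^4}{17(1-p)^4}\gamma' \;=\; \lambda_{\min} + \gamma,\]
in fact strictly, since Claim~\ref{goodOCC1:p} tells us $4$-forests leave $\Lambda^{(1)}_{\min}$ once $p < 1/2$.

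The one genuinely delicate case is $G = K_4^-$. Here Claim~\ref{goodOCC2:p}(3) gives $\lambda^{(2)}_{K_4^-} = 2^{-3}p^5/(1-p)^5$, so combining with $\lambda^{(1)}_{K_4^-} \geq \lambda_{\min}$ gives $\lambda_{K_4^-} \geq \lambda_{\min} + \tfrac{2}{17}\gamma'\,p^5/(1-p)^5$; this is at least $\lambda_{\min} + \gamma$ exactly when $2p/(1-p)\geq 1$, i.e. $p\geq 1/3$. For $p \in [\tau,1/3)$ the bare bound $\lambda^{(1)}_{K_4^-}\geq\lambda_{\min}$ no longer suffices, and I would instead plug in the explicit value of $\lambda^{(1)}_{K_4^-}$ as a function of $p$ coming out of the proof of Claim~\ref{goodOCC1:p}, and verify that $\lambda^{(1)}_{K_4^-} - \lambda_{\min}$ already covers the shortfall $\tfrac{\gamma'}{17}\cdot\tfrac{p^4(1-3p)}{(1-p)^5}$ throughout $[\tau,1/3)$. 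Carrying out this estimate, and confirming it survives all the way down to $p = \tau = 0.248$ (below which $3$-forests break the analysis of Claim~\ref{goodOCC1:p}), is the main obstacle. Once it is done, the fact that $x\mapsto x^4/(1-x)^4$ is increasing on $[0,1)$ gives that $\gamma = \tfrac{p^4}{17(1-p)^4}\gamma'$ is bounded below by a positive constant depending only on $\tau$, uniformly in $p\in[\tau,1/2)$ and in $n$, which is exactly the uniformity needed for the constant $c_p$ in Theorem~\ref{main}.
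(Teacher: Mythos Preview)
Your argument for the first two bullets, and for the non-exceptional graphs and the $4$-forests in the third bullet, matches the paper's (one-line) proof exactly. You have, moreover, spotted a genuine subtlety that the paper glosses over: for $G = K_4^-$ the contribution from $\Lambda^{(2)}$ is $\tfrac{16}{17}\gamma'\cdot 2^{-3}\,p^5/(1-p)^5 = \tfrac{2p^5}{17(1-p)^5}\gamma'$, and this is at least $\gamma = \tfrac{p^4}{17(1-p)^4}\gamma'$ only when $2p/(1-p)\ge 1$, i.e.\ $p\ge 1/3$. The paper's proof says merely ``Same as the proof of Corollary~\ref{cor:OCCmain}, only $\lambda_F,\lambda_{K_4^-}$ are somewhat smaller,'' which does not address the range $p\in[\tau,1/3)$.

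Your proposed fix is the right one, and is routine in the same sense as the other polynomial verifications in Section~\ref{sec:goodOCC1:p}. From the cut distribution $(q_0,q_1,q_2,q_3)(K_4^-)=(1/8,0,1/4,1/2)$ one has
\[
\lambda^{(1)}_{K_4^-} \;=\; -\Bigl(\tfrac{p}{1-p}\Bigr)^{5}\Bigl(\tfrac{1}{8}+\tfrac{c_2}{4}+\tfrac{c_3}{2}\Bigr),
\]
and the choice of $c_3$ was made strictly below the upper bound coming from $K_4^-$ (the difference is $\tfrac{2(1-2p)}{p^2(p^2+p+1)}$), so $\lambda^{(1)}_{K_4^-}-\lambda_{\min}$ is a positive explicit rational function of $p$ vanishing only at $p=1/2$. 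Checking that this slack exceeds the shortfall $\tfrac{\gamma'}{17}\cdot\tfrac{p^4(1-3p)}{(1-p)^5}$ on $[\tau,1/3)$ is then a single polynomial inequality, verifiable by the Sturm-chain method already used in the paper. So your analysis is more careful than the paper's here, and the residual computation is of exactly the kind the paper treats as routine.
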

\begin{proof}
Same as the proof of Corollary \ref{cor:OCCmain}, only $\lambda_F,\lambda_{K_4^-}$ are somewhat smaller. We use the fact that $p/(1-p) = 1/(1-p) - 1$ is an increasing function of $p$.
\end{proof}

This implies Theorem~\ref{main} for $p \in [\tau,1/2)$. The rest of the proof is found in subsection~\ref{sec:smallp}.

\subsection{Proof of Claim \ref{goodOCC1:p}} \label{sec:goodOCC1:p}

The proof of Claim~\ref{goodOCC1:p} uses Lemmas~\ref{lemCutStatistics} and~\ref{lem:trivialgraphics}, and in principle follows the same route as the proof of Claim \ref{goodOCC1} for \(p=1/2\). However, whereas in the case of $p=1/2$ we could verify all the estimates with explicit calculations, here we need to argue that certain inequalities (which are fixed, i.e. do not depend on \(n\)) hold for the entire range $p \in [\tau,1/2)$. The inequalities in question will always be of the form $r(p) > \min\{r_i(p) : i \in S\}$, where $r,r_i$ are explicit rational functions. We actually verify the stronger claim that $r(p) > r_i(p)$ for all $i \in S$. Each such inequality is equivalent to an inequality $P_i(p) > 0$, for some polynomials $P_i$. These inequalities can be checked by verifying that $P_i(3/8) > 0$ (note that $\tau < 3/8$), and that $P_i(x)$ has no zeroes in $[\tau,1/2)$; the latter can be verified formally using Sturm chains (see for example \cite{HookMcAree}). This verification has been done for all 
inequalities of this form appearing below.

We will prove Claim \ref{goodOCC1:p} by reducing it to a finite number of cases (similarly to the proof of Claim~\ref{goodOCC1}), and showing that $\lambda_G > -p^3/(1-p^3)$ for all graphs not in $\Lambda_{\min}$. This automatically implies the existence of a spectral gap $\gamma_{p} > 0$, which might depend on $p$. If, however, we restrict ourselves to graphs other than $4$-forests and $K_4^{-}$, then all the inequalities are strict on $[\tau,1/2]$: one can verify that the corresponding polynomial \(P\) has \(P(3/8) >0\), and no zeros in $[\tau,1/2]$. So in these cases, by compactness, the minimum spectral gap on the entire interval $[\tau,1/2]$ is \(\geq \gamma'\) for some \(\gamma' >0\) not depending on \(p\).

We will need some easy facts about graphs in addition to Lemma~\ref{lem:trivialgraphics}:
\begin{lem} \label{lem:trivialgraphics:p}
 Let $G$ be a graph with $m$ bridges.
\begin{enumerate}
 \item If $m = 1$ and $|G|>1$, then $|G| \geq 4$.
 \item If $m = 0$ and \(|G| \leq 5\), then $G$ is a triangle, a $C_4$, a $C_5$ or a $K_4^-$.
\end{enumerate}
\end{lem}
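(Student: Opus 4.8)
The plan is to read both parts off the block decomposition of $G$ already in play in Section~\ref{sectionCutStatistics}, using only the elementary observation that every biconnected block of a graph contains a cycle and hence has at least $3$ edges (a $2$-connected simple graph with more than one edge has minimum degree $\geq 2$, so contains a cycle, and the shortest cycle has length $3$), and that distinct blocks are edge-disjoint.

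For part~1, suppose $m = 1$ and $|G| > 1$. Exactly one edge of $G$ is a bridge, so there is a further edge $e$, and $e$ is not a bridge; hence $e$ lies in some biconnected block $K$ of $G$, which is edge-disjoint from the bridge and satisfies $|K| \geq 3$. Therefore $|G| \geq 1 + |K| \geq 4$. For part~2, suppose $m = 0$ and $|G| \leq 5$. Then no edge is a bridge, so every edge lies in a biconnected block, each having $\geq 3$ edges; two such (edge-disjoint) blocks would force $|G| \geq 6$, so there is exactly one, i.e.\ (discarding isolated vertices) $G$ is itself a biconnected graph with $3 \leq |G| \leq 5$. I would then just enumerate using the fact that a biconnected graph on $v$ vertices has $v \geq 3$ and $|E| \geq v$, with equality exactly for the cycle $C_v$: $|G| = 3$ forces $v = 3$, $G = K_3$; $|G| = 4$ forces $v = 4$ (a simple graph on $3$ vertices has at most $3$ edges), so $G = C_4$; $|G| = 5$ forces $v \in \{4,5\}$, giving $G = K_4^-$ (the unique simple $4$-vertex graph with $5$ edges, which is $2$-connected) or $G = C_5$.

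There is no genuine obstacle here; both parts are routine. The only points needing a little care are the block-decomposition conventions — treating bridges and biconnected components as distinct block types, so that a biconnected block always has at least $3$ edges, and allowing $G$ to carry isolated vertices — and the small case check that the only $2$-connected simple graphs with $3$, $4$, or $5$ edges are $K_3$, $C_4$, and $\{C_5, K_4^-\}$ respectively. That last check is exactly the one already carried out inside the proof of Lemma~\ref{lemCutStatistics}(5), so in the write-up I would simply invoke it.
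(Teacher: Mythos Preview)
Your proposal is correct and follows essentially the same approach as the paper: both use that a biconnected block has at least $3$ edges to handle part~1, and that two such blocks would force $|G|\geq 6$ to reduce part~2 to enumerating biconnected graphs on $3$--$5$ edges. The paper's proof is simply a terser version of yours.
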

\begin{proof}
\begin{enumerate}
 \item Every biconnected graph has at least $3$ edges.
 \item If $G$ has two biconnected components, then $|G| \geq 6$. The only biconnected graphs with at most $5$ edges are those given in the list.
\end{enumerate}
\end{proof}

\begin{proof}[of Claim \ref{goodOCC1:p}]
We begin by noting that for $p \in (0,1/2]$, $c_0$ and $c_3$ are always positive, and $c_1$ is always negative. The remaining coefficient $c_2$ changes signs
from positive to negative at $(3-\sqrt{5})/2 = 0.382$ (to 3 d.p.). Knowing the signs of the coefficients will help us estimate $\lambda_G$.

The rest of the proof consists of two parts: $|G|$ odd and $|G|$ even.

\noindent \textbf{Proof for graphs with an odd number of edges:}
Lemma~\ref{lemCutStatistics}(4,5) implies the general bound
\[ \lambda_G \geq -\left(\frac{p}{1-p}\right)^{|G|}
 \left[ q_0(1 + mc_1) + \max\left(\tfrac{3}{4}c_2,0\right) + \tfrac{1}{2}c_3 \right]. \]
It can be checked that $1+c_1 > 0$, whereas $1+mc_1 < 0$ for $m \geq 2$.

When $m \geq 2$, since $1+mc_1 < 0$, we have the sharper estimate
\[ \lambda_G \geq -\left(\frac{p}{1-p}\right)^{|G|}
 \left[ \max\left(\tfrac{3}{4}c_2,0\right) + \tfrac{1}{2}c_3 \right]. \]
If $|G|=3$ then $G$ is a $3$-forest, and we can verify that $\lambda_G > -p^3/(1-p^3)$ by direct calculation. Otherwise, $-(p/(1-p))^{|G|} \geq -(p/(1-p))^5$, so
\[ \lambda_G \geq -\left(\frac{p}{1-p}\right)^5
 \left[ \max\left(\tfrac{3}{4}c_2,0\right) + \tfrac{1}{2}c_3 \right]. \]
It can be checked that the right-hand side is always $> -p^3/(1-p^3)$.

When $m=1$, Lemma~\ref{lem:trivialgraphics:p}(1) implies that either $|G|=1$ or $|G| \geq 5$. In the former case, $\lambda_G = -p^3/(1-p^3)$. In the latter case, Lemma~\ref{lem:trivialgraphics}(1) implies that $q_0 \leq 1/4$, and therefore
\[ \lambda_G \geq -\left(\frac{p}{1-p}\right)^5
 \left[ \tfrac{1}{4}(1 + c_1) + \max\left(\tfrac{3}{4}c_2,0\right) + \tfrac{1}{2}c_3 \right]. \]
It can be checked that the right-hand side is always $> -p^3/(1-p^3)$.

When $m=0$, Lemma~\ref{lem:trivialgraphics:p}(2) shows that either $G$ is a triangle, $C_5$ or $K_4^-$, or $|G|\geq 7$. If $G$ is a triangle then $\lambda_G = -p^3/(1-p^3)$. If $G$ is $C_5$ or $K_4^-$, we can verify that $\lambda_G > -p^3/(1-p^3)$ by direct calculation, except that for $K_4^-$, we get equality when $p=1/2$. Otherwise, Lemma~\ref{lem:trivialgraphics}(2) shows that $q_0 \leq 1/16$, and so
\[ \lambda_G \geq -\left(\frac{p}{1-p}\right)^7
 \left[ \tfrac{1}{16} + \max\left(\tfrac{3}{4}c_2,0\right) + \tfrac{1}{2}c_3 \right]. \]
It can be checked that the right-hand side is always $> -p^3/(1-p^3)$.

\medskip

\noindent \textbf{Proof for graphs with an even number of edges:}
Equation~\eqref{eq:pgfexp} implies that
\[ \lambda_G = \left(\frac{p}{1-p}\right)^{|G|} (d_0(m) a_0 + d_2(m) a_2 + d_3(m) a_3), \]
where $d_0,d_2,d_3$ are defined by
\begin{align*}
 d_0(m) &= 2^{-m} \left[1 + mc_1 + \binom{m}{2}c_2 + \binom{m}{3}c_3\right], \\
 d_2(m) &= 2^{-m} (c_2 + mc_3), \\
 d_3(m) &= 2^{-m} c_3.
\end{align*}
Since $c_3 > 0$, we know that $d_3(m) > 0$. We can further check that $d_2(m) > 0$ when $m \geq 2$; this just involves checking that $c_2 + 2c_3 > 0$.

We claim that $d_1(m) > 0$ for $m \geq 10$. To see this, check first that $c_1 + 7c_3 > 0$ and $c_2 + 2c_3 > 0$. Note that
\begin{align*}
2^{m+1} d_0(m+1) - 2^m d_0(m) &= c_1 + mc_2 + \binom{m}{2} c_3 \\
&\geq (c_1 + 7c_3) + m(c_2 + 2c_3) > 0,
\end{align*}
using $\binom{m}{2} \geq 2m+7$, which is true for $m \geq 7$. It remains to check by direct calculation that $d_1(10) > 0$.

We have shown that when $m \geq 10$, $\lambda_G > 0$. If $m < 10$ and $G$ is a forest, then $G$ is either a $2$-forest, a $4$-forest, a $6$-forest or an $8$-forest. If $G$ is a $2$-forest, then $\lambda_G = -p^3/(1-p^3)$. For the other forests listed, direct calculation shows that $\lambda_G > -p^3/(1-p)^3$, except that for $4$-forests, we get equality when $p=1/2$.

The remaining case is when $m < 10$ and $G$ is \emph{not} a forest. Lemmas~\ref{lemCutStatistics}(5) and~\ref{lem:trivialgraphics}(3) give the following bound:
\[ \lambda_G \geq \left(\frac{p}{1-p}\right)^2
\left[ \min\left(\tfrac{1}{4}d_0(m),0\right) + \min\left(\tfrac{3}{4}d_2(m),0\right)\right].\]
It can be checked that for all $m < 10$, the right-hand side is $> -p^3/(1-p^3)$.
\end{proof}

\subsection{Small \texorpdfstring{$p$}{p}} \label{sec:smallp}
In this section, we complete the proof of Theorem~\ref{main} by considering the range $p \in (0,\tau]$. We read off the OCC spectrum constructed in~\cite{ehud} and analyze it. For the rest of the section, we assume that $p \in (0,\tau]$.

\begin{claim} \label{OCC:smallp}
 The following describes an OCC spectrum $\Lambda$:
\[ \lambda_G = \lambda_{|G|} = \left(-\frac{p}{1-p}\right)^{|G|} \left[ 1 - \frac{1+p}{1+p+p^2}|G| + \frac{1}{1+p+p^2} \binom{|G|}{2} \right].\]
Moreover,
\begin{enumerate}
\item $\lambda_0 =1$;
\item $\lambda_{\textrm{min}} = \lambda_{1} = \lambda_{2} = \lambda_{3} = -p^3/(1-p^3)$; \(\Lambda_{\textrm{min}}\) consists of all graphs with 1,2 or 3 edges.
\item \[\lambda_{|G|} \geq \lambda_{5} = -\left(\frac{p}{1-p}\right)^5\left(\frac{6-4p+p^2}{1+p+p^2}\right)\]
whenever \(|G| \geq 4\), so the spectral gap
\[\gamma = \frac{p^3}{1-p^3}-\left(\frac{p}{1-p}\right)^5\left(\frac{6-4p+p^2}{1+p+p^2}\right).\]
\end{enumerate}
\end{claim}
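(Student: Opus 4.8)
The plan is to verify the three numbered items essentially by direct computation, the only genuine work being the monotonicity argument in item (3). First, note that the formula for $\lambda_G$ is exactly the OCC spectrum read off from \cite{ehud}; it has the form $\lambda_G = (-p/(1-p))^{|G|} P(|G|)$ where $P(k) = 1 - \frac{1+p}{1+p+p^2}k + \frac{1}{1+p+p^2}\binom{k}{2}$ is a fixed quadratic in $k$ (with coefficients depending on $p$). That this is an OCC spectrum follows because it is the spectrum of the operator constructed in \cite{ehud} in our setting — alternatively, one can exhibit it via Lemma~\ref{lemOCCSpectrum:p} using a suitable distribution $\mathcal{B}$ and weights, but since we only need its numerical properties, I would simply cite \cite{ehud} for the OCC property and concentrate on the eigenvalue estimates. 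For item (1), $\lambda_0 = P(0) = 1$ is immediate. For item (2), one checks $P(1) = P(2) = P(3)$: indeed $P(1) - P(2)$ and $P(2) - P(3)$ both vanish identically in $p$ after clearing the denominator $1+p+p^2$ (this is the algebraic identity that makes the construction work), and each equals $-\frac{p}{1-p}\cdot\frac{1}{1+p+p^2}\cdot(\text{something})$; a short computation gives the common value $\lambda_1 = \lambda_2 = \lambda_3 = -p^3/(1-p^3)$. One then checks this is genuinely the minimum: $\lambda_0 = 1 > 0$, and for $|G| \geq 4$ the bound from item (3) shows $\lambda_{|G|} \geq \lambda_5 > -p^3/(1-p^3)$ on the range $p \in (0,\tau]$ (the last strict inequality being one of the polynomial inequalities verifiable by the Sturm-chain method described earlier, or here simply by checking $P_i(\tau/2) > 0$ and absence of zeros on $(0,\tau]$).

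For item (3), the key observation is that since $|\,{-p/(1-p)}\,| < 1$ for $p < 1/2$, the sign of $\lambda_k$ alternates with $k$ and, crucially, the \emph{odd-indexed} eigenvalues $\lambda_1, \lambda_3, \lambda_5, \lambda_7, \ldots$ are the negative ones while even-indexed are nonnegative once $P(k) > 0$. So to prove $\lambda_{|G|} \geq \lambda_5$ for all $|G| \geq 4$ it suffices to (a) handle $|G| = 4$ directly (even index, and one checks $\lambda_4 \geq \lambda_5$, or even $\lambda_4 \geq 0 > \lambda_5$ once $P(4) \geq 0$), and (b) show that among the odd indices $k \geq 5$, $\lambda_5$ is the smallest, i.e. $|\lambda_k| \leq |\lambda_5|$ for odd $k \geq 5$ — equivalently $(p/(1-p))^k P(k) \leq (p/(1-p))^5 P(5)$ — and (c) show that for even $k \geq 6$, $\lambda_k \geq 0 > \lambda_5$ (which needs $P(k) \geq 0$, clear since $P$ is an upward quadratic and already positive at $k=4$ or $5$ on this $p$-range). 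For (b), since $p/(1-p) \leq \tau/(1-\tau) < 1$, the geometric factor $(p/(1-p))^k$ decays, and the quadratic $P(k)$ grows only polynomially, so the product is eventually decreasing in $k$; one checks that the ratio $\frac{(p/(1-p))^{k+2}P(k+2)}{(p/(1-p))^k P(k)} = \left(\frac{p}{1-p}\right)^2 \frac{P(k+2)}{P(k)} < 1$ for all odd $k \geq 5$ and all $p \in (0,\tau]$, which again reduces to a polynomial inequality in $p$ (for each small $k$) plus an easy asymptotic argument for large $k$ (since $\left(\frac{p}{1-p}\right)^2 < 1$ is bounded away from $1$ and $P(k+2)/P(k) \to 1$). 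Then $\lambda_5 = -(p/(1-p))^5 P(5)$, and expanding $P(5) = 1 - 5\frac{1+p}{1+p+p^2} + \frac{10}{1+p+p^2} = \frac{(1+p+p^2) - 5(1+p) + 10}{1+p+p^2} = \frac{6 - 4p + p^2}{1+p+p^2}$ gives precisely the stated formula, whence the spectral gap $\gamma = \frac{p^3}{1-p^3} - \left(\frac{p}{1-p}\right)^5\frac{6-4p+p^2}{1+p+p^2}$.

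The main obstacle — and it is a mild one — is item (b): ruling out that some $\lambda_k$ with $k$ odd and large dips below $\lambda_5$. This cannot happen because of the exponential decay, but making the argument clean requires combining a finite check (small odd $k$, via polynomial inequalities in $p$ over $(0,\tau]$, checkable by Sturm chains exactly as stated in the paragraph preceding Section~\ref{sec:goodOCC1:p}) with the uniform bound $\left(\frac{p}{1-p}\right)^2 \leq \left(\frac{\tau}{1-\tau}\right)^2 < \tfrac{1}{4}$ to dominate the polynomial ratio $P(k+2)/P(k)$ for all large $k$ at once. Everything else is routine: the identities $\lambda_1 = \lambda_2 = \lambda_3$ and $\lambda_0 = 1$ are one-line computations, and positivity of $\gamma$ on $(0,\tau]$ is the one substantive numerical inequality, established by the Sturm-chain verification already invoked for the other claims of this type.
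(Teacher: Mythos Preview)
Your overall strategy matches the paper's proof almost exactly: direct verification of $\lambda_0=1$ and $\lambda_1=\lambda_2=\lambda_3=-p^3/(1-p^3)$, then a parity split showing $\lambda_k \geq 0$ for even $k \geq 4$ and $|\lambda_{k+2}|<|\lambda_k|$ for odd $k\geq 5$, so that $\lambda_5$ is the minimum over $k\geq 4$. The paper also notes that the OCC property follows directly from Lemma~\ref{lemOCCSpectrum:p}, since $\binom{|G|}{i}$ counts the $i$-edge subgraphs of $G$ and graphs with $1$ or $2$ edges are bipartite; this is a self-contained alternative to citing \cite{ehud}.

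There is, however, a genuine error in your sketch of item~(2). You write that ``$P(1)=P(2)=P(3)$'' and that ``$P(1)-P(2)$ and $P(2)-P(3)$ both vanish identically in $p$''. This is false. One computes
\[
P(1)=\frac{p^2}{1+p+p^2},\qquad P(2)=\frac{-p(1-p)}{1+p+p^2},\qquad P(3)=\frac{(1-p)^2}{1+p+p^2},
\]
so e.g.\ $P(1)-P(2)=\frac{p}{1+p+p^2}\neq 0$. What \emph{is} true is that $\lambda_1=\lambda_2=\lambda_3$, and this equality depends essentially on the alternating factor $(-p/(1-p))^k$: for instance
\[
\lambda_2=\left(\frac{p}{1-p}\right)^{\!2}\cdot\frac{-p(1-p)}{1+p+p^2}=\frac{-p^3}{(1-p)(1+p+p^2)}=\frac{-p^3}{1-p^3}=\lambda_1.
\]
So the verification you need is of $\lambda_k$, not $P(k)$; the values $P(1),P(2),P(3)$ are distinct and of alternating sign, and it is precisely the interplay with $(-p/(1-p))^k$ that makes the three eigenvalues coincide. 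Once you fix this, the rest of your argument (the upward-quadratic observation giving $P(k)>0$ for $k\geq 3$, hence $\lambda_k\geq 0$ for even $k\geq 4$, and the decay of $(p/(1-p))^k P(k)$ along odd $k\geq 5$) is correct and is exactly the paper's route.
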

\begin{proof}
This can be deduced from \cite{ehud}. Alternatively, Lemma \ref{lemOCCSpectrum:p} implies that any function of the form
\[\lambda_{G} = \lambda_{|G|} = \left(-\frac{p}{1-p}\right)^{|G|}\left(a_{0}+a_{1} |G| + a_2 \binom{|G|}{2}\right)\quad (a_0,a_1,a_2 \in \mathbb{R})\]
is an OCC spectrum: \(\binom{|G|}{i}\) simply counts the number of \(i\)-edge subgraphs of \(G\), and graphs with 1 or 2 edges are bipartite. The coefficients chosen above are forced by \(\lambda_0 = 1\), \(\lambda_1 = \lambda_2 = -p^3/(1-p^3)\); it is easily checked that the above choice also guarantees that \(\lambda_3 = -p^3/(1-p^3)\). For the rest, one may calculate that:
\begin{itemize}
\item \(\lambda_{5} < 0\);
\item \(\lambda_{|G|} \geq 0\) whenever \(|G| \geq 4\) is even;
\item \(|\lambda_{|G|+2}| < |\lambda_{|G|}|\) whenever \(|G| \geq 5\) is odd,
\end{itemize}
completing the proof.
\end{proof}

To deduce Theorem~\ref{main} from Corollary~\ref{corMain:p}, we require only the following easy lemma:
\begin{lem} \label{lem:main:finish}
 Let $\gamma$ be the spectral gap in Claim~\ref{OCC:smallp}. Then
\[\frac{p^3}{(1-p^3)\gamma}\]
is bounded from above for $p \in (0,\tau]$.
\end{lem}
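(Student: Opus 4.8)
The plan is to compute $\dfrac{p^3}{(1-p^3)\gamma}$ explicitly and watch it collapse to a manifestly bounded expression on $(0,\tau]$. Writing out the spectral gap from Claim~\ref{OCC:smallp},
\[ \gamma = \frac{p^3}{1-p^3} - \left(\frac{p}{1-p}\right)^{5}\frac{6-4p+p^2}{1+p+p^2}, \]
so that, factoring out $\frac{p^3}{1-p^3}$,
\[ \frac{p^3/(1-p^3)}{\gamma} = \frac{1}{1-g(p)}, \qquad g(p) := \frac{1-p^3}{p^3}\left(\frac{p}{1-p}\right)^{5}\frac{6-4p+p^2}{1+p+p^2}. \]
The first step is to simplify $g$: using $1-p^3 = (1-p)(1+p+p^2)$, the factor $1+p+p^2$ cancels and the powers of $p$ and $1-p$ combine, leaving
\[ g(p) = \frac{p^2(6-4p+p^2)}{(1-p)^4}. \]

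The second — and decisive — step is to notice the cancellation in the numerator of $1-g(p)$. Expanding $(1-p)^4 = 1-4p+6p^2-4p^3+p^4$ and subtracting $p^2(6-4p+p^2) = 6p^2-4p^3+p^4$ leaves exactly $1-4p$, so
\[ 1-g(p) = \frac{1-4p}{(1-p)^4}, \qquad\text{hence}\qquad \frac{p^3}{(1-p^3)\gamma} = \frac{(1-p)^4}{1-4p}. \]
(As a sanity check, this also shows $\gamma>0$ precisely when $p<1/4$, matching the discussion elsewhere in the paper about the method working for $p<1/4$.)

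The final step is immediate. Since $\tau = 0.248 < 1/4$, for every $p\in(0,\tau]$ we have $1-4p \ge 1-4\tau = 0.008 > 0$ and $(1-p)^4 \le 1$, whence
\[ \frac{p^3}{(1-p^3)\gamma} = \frac{(1-p)^4}{1-4p} \le \frac{1}{1-4\tau} = 125, \]
which is the claimed bound.

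I do not expect a real obstacle here: the only thing one has to get right is spotting the two cancellations (the factor $1+p+p^2$, and the identity $(1-p)^4 - p^2(6-4p+p^2) = 1-4p$), after which the conclusion is one line. Should one prefer not to rely on the exact cancellation, an alternative is a compactness argument — $\gamma$ is a positive continuous function of $p$ on any $[\delta,\tau]$ and hence bounded below there, while $\frac{p^3}{1-p^3}$ is bounded above, and as $p\to 0$ one has $g(p)\to 0$ so that $\frac{1}{1-g(p)}\to 1$ — but the direct computation above is both cleaner and sharper.
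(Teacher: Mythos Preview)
Your proof is correct and follows essentially the same route as the paper: both compute the same auxiliary function (the paper writes $g(p) = 1 - \frac{p^2(6-4p+p^2)}{(1-p)^4}$, your $1-g(p)$) and bound it away from zero on $(0,\tau]$ by evaluating at the endpoint $\tau$. Your argument goes one pleasant step further than the paper's, spotting the exact identity $(1-p)^4 - p^2(6-4p+p^2) = 1-4p$, which yields the closed form $\frac{p^3}{(1-p^3)\gamma} = \frac{(1-p)^4}{1-4p}$ and the explicit bound $125$; the paper instead just asserts that its $g$ is decreasing with $g(0)=1$, $g(1/4)=0$, hence $g(\tau)>0$.
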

\begin{proof}
Let
\[g(p) = \frac{(1-p^3)\gamma}{p^3} = 1-\frac{p^2(6-4p+p^2)}{(1-p)^4}\]
It is easy to check that for \(p \in [0,1/4]\), \(g(p)\) is a strictly decreasing function of \(p\), with \(g(0)=1\) and \(g(1/4) = 0\). It follows that \(g(p) \geq g(\tau) > 0\) for all \(p \in [0,\tau]\). Hence,
\[\frac{p^3}{(1-p^3)\gamma} \leq \frac{1}{g(\tau)}\]
for all $p \in (0,\tau]$, as required.
\end{proof}

Lemma \ref{lem:main:finish} and Corollary \ref{cor:OCCmain:p} imply that there exists an absolute constant $C$ such that if $\cF$ is an odd-cycle-intersecting family with $\mu_p(\cF) \geq p^3 - \epsilon$, then
\[ \sum_{|G|>3} \hat{\cF}^2(G) \leq C\epsilon.\]
We now appeal to Theorem 3 in Kindler-Safra \cite{kindler-safra}, which in fact is stated for the \(p\)-skew measure. (Note that we quote inferior bounds, for brevity.)
\begin{thm}[Kindler-Safra]
 For every $t \in \mathbb{N}$ and $p \in (0,1)$, there exist positive reals $\epsilon_{0} = \Omega(p^{4t})$, $c = O(p^{-t})$
and $T = O(tp^{-4t})$ such that the following holds. Let \(N \in \mathbb{N}\), and let $f:\{0,1\}^{N} \to \{0,1\}$ be a Boolean function such that
 $$
 \sum_{|S|>t} \widehat{f}(S)^2 = \epsilon < \epsilon_0.
 $$
 Then there exists a Boolean function $g:\{0,1\}^{N} \to \{0,1\}$, depending on at most $T_0$ coordinates, such that
 $$
 \mu_p(\{R: f(R)\not = g(R)\} ) \leq c \varepsilon.
 $$
\end{thm}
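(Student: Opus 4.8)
This statement is quoted (up to the deliberately crude quantitative bounds, as noted) from Theorem~3 of~\cite{kindler-safra}, and in the body of the paper we only ever use it as a black box, with $t=3$; so strictly speaking nothing is to be proved here. For the reader's orientation we nonetheless indicate the shape of the argument, which is a robust, approximate version of the \emph{exact} junta theorem of Nisan and Szegedy~\cite[Theorem~2.1]{NisanSzegedy} that we invoked earlier: there, a Boolean function \emph{all} of whose Fourier mass lies on levels $\leq t$ is automatically a $t2^{t-1}$-junta, whereas here the Fourier weight above level $t$ need only be small, and the conclusion correspondingly weakens to ``close to a junta''.

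The natural plan is to bootstrap from the case $t=1$, which is the Friedgut--Kalai--Naor theorem, and the engine throughout is a fourth-moment estimate controlled by hypercontractivity. Write $h = f^{\leq t} = \sum_{|S|\leq t}\widehat f(S)\chi_S$ for the truncation of $f$ to levels $\leq t$, so that $\|f-h\|_2^2=\epsilon$. Since $f$ is $\{0,1\}$-valued one has $f^4=f^2=f$, and expanding $f=h+(f-h)$ gives the identity $\E_p[h^4]=\E_p[h^2]+O_{t,p}(\epsilon)$; the cross terms are controlled using the Bonami--Beckner inequality, which bounds the $4$-norm of the degree-$\leq t$ function $h$ by $C_p^{\,t}\|h\|_2$, and obtaining a bound here that is \emph{linear} in $\epsilon$ rather than the $\sqrt{\epsilon}$ a crude application of Cauchy--Schwarz would give is already a point that must be argued with care. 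Feeding this identity back into a level-$t$ fourth-moment inequality yields a dichotomy: either $f$ is already $O_{t,p}(\epsilon)$-close to a function of negligible Fourier complexity, in which case we are done, or some coordinate has influence bounded below by an absolute constant $\tau(t,p)>0$. In the latter case one restricts $f$ to the two values of that coordinate; each restriction is Boolean and still has Fourier weight $O_p(\epsilon)$ above level $t$, with the constant degrading by a bounded multiplicative factor, so one may recurse.

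The step I expect to be the main obstacle, and which is the genuinely delicate content of~\cite{kindler-safra}, is showing that this recursion terminates after a number of steps bounded by an absolute $T_0=T_0(t,p)$: a naive bound on the total influence of $f$ is only $O(t+n\epsilon)$, which is useless, so one has to argue more subtly about the way the high-influence coordinates can carry the low-degree Fourier mass. Once the recursion halts, the boundedly many coordinates that have been stripped off, together with those of the near-constant residual functions, assemble into the desired junta $g$ on at most $T$ coordinates with $\mu_p(\{R:f(R)\neq g(R)\})\leq c\epsilon$; threading the hypercontractive constant $C_p$ and the per-step multiplicative loss through the whole argument produces the stated dependence of $\epsilon_0$, $c$ and $T$ on $p$ and $t$. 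For our application, with $t$ fixed equal to $3$, only the \emph{existence} of such finite constants (uniform for $p$ in any fixed closed sub-interval of $(0,1)$) is needed, so no optimisation is carried out.
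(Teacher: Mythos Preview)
Your proposal is correct in its essential observation: the paper does not prove this theorem at all---it is quoted directly from \cite{kindler-safra} as a black-box result, introduced with the phrase ``We now appeal to Theorem~3 in Kindler--Safra \cite{kindler-safra}'' (and earlier, ``We need Theorem 3 from \cite{kindler-safra}''). So your opening sentence matches the paper exactly, and there is nothing further to compare.

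Your additional sketch of the Kindler--Safra argument goes beyond what the paper offers and is a reasonable high-level outline of their approach (hypercontractivity, an FKN-type base, and a recursion on influential coordinates). Since the paper provides no proof or sketch whatsoever, this is purely supplementary material on your part; it is not required, but it is not wrong either. If you retain it, be aware that it is only an outline and explicitly flags the termination step as the delicate point rather than resolving it---which is fine given that the paper itself treats the whole result as external.
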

Note that if \(p \in [\delta,1/2)\), where \(\delta >0\) is fixed, then \(\epsilon_0,c\) and \(T_0\) can be chosen to depend only upon \(\delta\). By the same argument as in the proof of Corollary \ref{corMain}, it follows that $\cF$ is $(c_p C \epsilon)$-close to a $\trium$, where \(c_p\) depends only upon \(p\), and is bounded for \(p \in [\delta,1/2)\) for any fixed \(\delta >0\), completing the proof of Theorem~\ref{main}.

\section{Odd-linear-dependency-intersecting families of subsets of \texorpdfstring{$\{0,1\}^n$}{\{0,1\}\textasciicircum n}}\label{sectionSchur}
In this section we prove Theorem~\ref{thmSchurIntro}.

\subsection{Definitions and Results} \label{sectionSchurIntro}
As stated in the Introduction, we say that a family \(\mathcal{F}\) of hypergraphs on \([n]\) is {\em odd-linear-dependency-intersecting} (or {\em odd-LD-intersecting}, for short) if for any \(G,H \in \mathcal{F}\) there exist \(l \in \mathbb{N}\) and nonempty sets \(A_1,A_2,\ldots,A_{2l+1} \in G \cap H\) such that
\[A_1 \Delta A_2 \Delta \ldots \Delta A_{2l+1} = \emptyset.\]

If we identify subsets of \([n]\) with their characteristic vectors in \(\{0,1\}^n = \mathbb{Z}_{2}^{n}\), then the symmetric difference operation \(\Delta\) is identified with vector-space addition, and hypergraphs on \([n]\) are identified with subsets of \(\{0,1\}^n\). Hence, equivalently, we say that a family \(\mathcal{F}\) of subsets of \(\mathbb{Z}_{2}^{n}\) is {\em odd-LD-intersecting} if for any two subsets \(S,T \in \mathcal{F}\) there exist \(l \in \mathbb{N}\) and non-zero vectors \(v_1,v_2,\ldots,v_{2l+1} \in S \cap T\) such that
\[v_1 + v_2 + \ldots + v_{2l+1} = 0.\]
In other words, the intersection of the two subsets must contain a non-trivial odd linear dependency.

Similarly, we say that a family \(\mathcal{F}\) of subsets of \(\mathbb{Z}_{2}^{n}\) is {\em odd-LD-agreeing} if for any \(S,T \in \mathcal{F}\) there exist \(l \in \mathbb{N}\) and non-zero vectors \(v_1,v_2,\ldots,v_{2l+1} \in \overline{S \Delta T}\) such that
\[v_1 + v_2 + \ldots + v_{2l+1} = 0.\]

Since \(0\) cannot occur in a non-trivial odd linear dependency, it is irrelevant: if \(\mathcal{F}\) is a maximal odd-LD agreeing family of subsets of \(\mathbb{Z}_{2}^{n}\), then $S \cup \{0\} \in \cF$ iff $S \setminus \{0\} \in \cF$.
Hence, from now on we will consider only families of subsets of \(\mathbb{Z}_{2}^{n}\) not containing \(0\), i.e. families of hypergraphs not containing \(\emptyset\) as an edge. Therefore we will work in \(\{0,1\}^{n} \setminus \{0\} = \mathbb{Z}_{2}^{n} \setminus \{0\}\). This will make our proofs neater, since the 0-vector behaves differently from all other vectors in \(\mathbb{Z}_{2}^{n}\).

For \(p \in [0,1]\), the skew product measure \(\mu_{p}\) on \(\mathbb{Z}_{2}^{n} \setminus \{0\}\) is defined, naturally, as follows. For \(S \subset \mathbb{Z}_{2}^{n} \setminus \{0\}\), we define
\[\mu_{p}(S) = p^{|S|}(1-p)^{2^n-1-|S|},\]
i.e. the probability that a \(p\)-random subset of \(\mathbb{Z}_{2}^{n} \setminus \{0\}\) is equal to \(S\), and if \(\mathcal{F}\) is a family of subsets of \(\mathbb{Z}_{2}^{n} \setminus \{0\}\), we define
\[\mu_{p}(\mathcal{F}) = \sum_{S \in \mathcal{F}} \mu_{p}(S).\]
We will work mostly with the uniform measure \(\mu_{1/2}\), which we will write as \(\mu\).

A {\em Schur triple} of vectors in \(\mathbb{Z}_{2}^{n} \setminus \{0\}\) is a set of three vectors \(\{x,y,z\}\) such that \(x+y=z\) (i.e. \(x+y+z=0\)) --- equivalently, a linearly dependent set of size 3. We say that a family \(\mathcal{T}\) of subsets of \(\mathbb{Z}_{2}^n \setminus \{0\}\) is a {\em Schur junta} if there exists a Schur triple \(\{x,y,x+y\}\) such that \(\mathcal{T}\) consists of all subsets of \(\mathbb{Z}_{2}^{n} \setminus \{0\}\) with prescribed intersection with \(\{x,y,x+y\}\). Similarly, we say that \(\mathcal{T}\) is a {\em Schur-umvirate} if there exists a Schur triple \(\{x,y,x+y\}\) such that \(\mathcal{T}\) consists of all subsets of \(\mathbb{Z}_{2}^{n} \setminus \{0\}\) containing \(\{x,y,x+y\}\).

An odd linear dependency will be the analogue of an odd cycle. We have the following:
\begin{thm}\label{thmSchur}
If $\cF$ is an odd-LD-agreeing family of subsets of \(\mathbb{Z}_{2}^{n} \setminus \{0\}\) then
\[\mu(\mathcal{F}) \leq 1/8.\]
Equality holds if and only if \(\mathcal{F}\) is a Schur junta. Moreover, there exists a constant \(c\) such that for any \(\epsilon >0\), if \(\mathcal{F}\) is an odd-LD-agreeing family of subsets of \(\mathbb{Z}_{2}^{n} \setminus \{0\}\) with \(\mu(\mathcal{F}) > \tfrac{1}{8}-\epsilon\), then there exists a Schur junta \(\mathcal{T}\) such that
\[\mu(\mathcal{T} \Delta \mathcal{F}) \le c \epsilon.\]
\end{thm}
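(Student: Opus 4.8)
The plan is to transcribe the $p=\tfrac12$ proof of Theorem~\ref{main} line by line, under the dictionary: the ground set $[n]^{(2)}$ becomes $\mathbb{Z}_2^n\setminus\{0\}$; a subgraph of $K_n$ becomes a subset $A\subseteq\mathbb{Z}_2^n\setminus\{0\}$; a bipartite graph becomes a \emph{Schur-free} set (one with no odd linear dependency); an odd cycle becomes an odd linear dependency; and a uniform random cut becomes a uniform random linear functional $\phi\colon\mathbb{Z}_2^n\to\mathbb{Z}_2$. The only genuinely new ingredient is the replacement for the observation that ``a triangle is not contained in a bipartite graph'': a set $L\subseteq\mathbb{Z}_2^n\setminus\{0\}$ is Schur-free if and only if $L\subseteq L_\phi:=\{v\neq0:\phi(v)=1\}$ for some linear functional $\phi$ (the affine hull of $L$ avoids $0$ precisely when $L$ has no odd linear dependency, since affine combinations over $\mathbb{Z}_2$ are exactly the odd-length sums). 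Consequently $\overline{L_\phi}=\ker\phi\setminus\{0\}$ meets every odd linear dependency (an odd number of bits summing to $0$ cannot all be $1$), so for an odd-LD-agreeing family $\mathcal F$ with indicator $f$ we have $f(S)=1\Rightarrow f(S\oplus\overline{L_\phi})=0$, because $\overline{S\,\triangle\,(S\oplus\overline{L_\phi})}=L_\phi$ is Schur-free. Thus the Cayley operators $A_\phi f(S)=f(S\oplus\overline{L_\phi})$, and their averages $A_{\mathcal B}$ over distributions $\mathcal B$ of functionals, are OCC operators in exactly the sense of Definition~\ref{OCC}, and the apparatus of Sections~\ref{subsec:fourieranalysis}--\ref{subsec:equivalence} --- Fourier analysis on $\mathcal P(\mathbb{Z}_2^n\setminus\{0\})$, Claim~\ref{claimOCC}, the weighted Hoffman bound Theorem~\ref{ThmHoffman} with its uniqueness/stability clauses, Corollary~\ref{corMain}, and the Chung--Frankl--Graham--Shearer monotonization Lemma~\ref{lemma:equivalence} --- carries over unchanged.

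Next I would build the analogue of the cut-statistics toolkit. Taking $\mathcal B$ uniform over \emph{all} functionals, the eigenvalue of $\chi_A$ under $A_{\mathcal B}$ is $\lambda_A=(-1)^{|A|}\,\mathbb{E}_\phi(-1)^{|A\cap L_\phi|}$, and, just as in Lemma~\ref{lemOCCSpectrum}, for any function $g$ on Schur-free sets the vector $(-1)^{|A|}\mathbb{E}_\phi[g(A\cap L_\phi)]$ is an OCC spectrum. The point is that the random set $A\cap L_\phi=\{v\in A:\phi(v)=1\}$ has, as its distribution over $\phi$, exactly the weight distribution of the binary linear code $U_A\subseteq\mathbb{Z}_2^A$ whose dual is the space of linear dependencies among the members of $A$. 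Writing $q_i(A)=\Pr_\phi[\,|A\cap L_\phi|=i\,]$, and $q_R(A)=\Pr_\phi[\,A\cap L_\phi$ is isomorphic to $R\,]$ for the natural isomorphism of vector configurations, these are precisely the building blocks of Corollary~\ref{cor:qB}. Under the correspondence ``linearly independent set $\leftrightarrow$ forest, Schur triple $\leftrightarrow$ triangle, minimal $4$-element even dependency $\leftrightarrow C_4$, two Schur triples sharing a vector $\leftrightarrow K_4^-$'', the small-configuration table reproduces the one in Section~\ref{sectionCutStatistics}: a single vector gives $(\tfrac12,\tfrac12)$, a pair gives $(\tfrac14,\tfrac12,\tfrac14)$, a Schur triple gives $(\tfrac14,0,\tfrac34)$, a $k$-element independent set gives the binomial distribution $\binom{k}{i}2^{-k}$, and the $K_4^-$-analogue gives $(\tfrac18,0,\tfrac14,\tfrac12,\tfrac18)$. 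Hence the identical coefficients $\left(1,-\tfrac57,-\tfrac17,\tfrac3{28}\right)$ define $\Lambda^{(1)}$, the spectrum $\Lambda^{(2)}=(-1)^{|A|}\big[\sum_F q_F(A)-q_\Box(A)\big]$ (sum over $4$-element independent sets $F$) plays the role of Claim~\ref{goodOCC2}, and combining them as in Corollary~\ref{cor:OCCmain} produces an OCC spectrum with $\lambda_\emptyset=1$, $\lambda_{\min}=-\tfrac17$, tight set exactly $\{$singletons, pairs, Schur triples$\}$ --- all of size at most $3$ --- and a positive spectral gap.

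Feeding this into the transferred Corollary~\ref{corMain} gives $\mu(\mathcal F)\le\tfrac18$ at once. For the equality case I would argue as in Section~\ref{subsec:equivalence}: reduce odd-LD-agreeing to odd-LD-intersecting via Lemma~\ref{lemma:equivalence} (with $\mathcal Z$ the set of odd linear dependencies), replace $\mathcal F$ by its up-filter, and note that the Fourier transform is supported on sets of size $\le3$, so by Nisan--Szegedy $\mathcal F$ is a junta on boundedly many coordinates, whence Lemma~\ref{lem:ehud} with $t=3$ forces $\mathcal F$ to be the family of all sets containing a fixed triple $\{v_1,v_2,v_3\}$; since $\mathcal F$ is odd-LD-intersecting this triple must itself contain an odd linear dependency, hence be a Schur triple, so $\mathcal F$ is a Schur-umvirate. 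The analogue of Lemma~\ref{agree2intersect} then promotes this to ``$\mathcal F$ is a Schur junta'' in the agreeing case, using that $\overline{\{x,y,x+y\}}\,\triangle\,\{x\}$ meets every odd linear dependency and that $\{y,\,x+y,\,w\}$ with $w\notin\{x,y,x+y\}$ never sums to $0$. Finally, stability follows from the Kindler--Safra theorem exactly as in the proof of Corollary~\ref{corMain}, once one checks --- verbatim to the argument there --- that the approximating junta must itself be odd-LD-agreeing.

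The main obstacle is the analogue of Claim~\ref{goodOCC1}: showing that $\lambda^{(1)}_A>-\tfrac17$ for every $A$ with $|A|\ge4$ that is neither a $4$-element independent set nor a $K_4^-$-analogue. This requires redoing the case analysis in the proof of Claim~\ref{goodOCC1}, which rests on the translations of Lemma~\ref{lemCutStatistics}: $q_0(A)=2^{-\operatorname{rank}A}$; $q_1(A)=m\,q_0(A)$ where $m$ counts the coloops of $A$ (the members of $A$ lying outside the span of the others), each of which splits off of the generating function just as a bridge does in Section~\ref{sectionCutStatistics}; $q_k(A)\le\tfrac12$ for odd $k$ (pair $\phi$ with $\phi+w$ for a $w$ with $\langle w,\sum_{v\in A}v\rangle=1$, which exists unless $\sum_{v\in A}v=0$, in which case $q_k(A)=0$); and $q_2(A)\le\tfrac34$ (from $\mathbb{E}_\phi|A\cap L_\phi|=|A|/2$ and $q_0(A)>0$, plus a finite check for $|A|\le5$). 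These go through cleanly --- arguably more cleanly than in the graph setting, since $q_i(A)$ depends only on the weight enumerator of $U_A$ and the ``split'' of $A$ is simply its decomposition into matroid-connected components --- but assembling them into the full chain of inequalities is where the work lies.
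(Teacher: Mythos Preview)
Your proposal is correct and follows exactly the paper's own route (Section~\ref{sectionSchur}): the same dictionary (hyperplane subsets $A_w$ in place of bipartite graphs), the same OLDC spectra $\Lambda^{(1)},\Lambda^{(2)}$ with the same coefficients, the same cut-statistics lemmas, and the same Hoffman/Nisan--Szegedy/Lemma~\ref{lem:ehud}/monotonization/Kindler--Safra endgame. The one place where your ``these go through cleanly'' is slightly over-optimistic is the analogue of Lemma~\ref{lem:trivialgraphics}(2): in the vector setting there is a \emph{third} exceptional odd-sized coloop-free structure of rank $\leq 3$ beyond the Schur triple and the $K_4^-$-analogue, namely $S=\{x,y,z,x+y,x+z,y+z,x+y+z\}$ (all nonzero vectors in a $3$-dimensional subspace), which has $q_0(S)=1/8>1/16$ and no graph counterpart; one must compute $Q_S(X)=\tfrac18+\tfrac78 X^4$ directly and observe $f(S)=1/8=1/7-1/56$, so the spectral gap survives.
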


A similar result holds for the skew product measures:

\begin{thm}\label{skewSchur}
\begin{itemize}
\item{{\bf [Extremal Families]}}
Let \(p \leq 1/2\). If $\cF$ is an odd-LD-intersecting family of subsets of \(\mathbb{Z}_{2}^{n} \setminus \{0\}\), then
\[\mu_p(\cF) \le p^3.\]
Equality holds if and only if $\cF$ is a Schur-umvirate.
\item{{\bf [Stability]}} There exists a constant $c$ such that for any $\epsilon \ge 0$,
if $\cF$ is an odd-LD-intersecting
family with \(\mu_{p}(\F) \geq p^3 - \epsilon\) then there exists a Schur-umvirate $\T$ such that
$$
\mu_p ( \T \Delta \cF) \le c \epsilon.
$$
\end{itemize}
\end{thm}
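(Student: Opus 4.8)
The plan is to re-run the entire argument of Sections~\ref{sec:phalf}--\ref{sec:smallp}, writing $X := \mathbb{Z}_2^n \setminus \{0\}$, with the cycle matroid of a graph replaced by the binary vector matroid $M(S)$ of a subset $S \subseteq X$, odd cycles replaced by odd circuits, bridges by coloops, graph block-decompositions by matroid direct-sum decompositions, forests by independent sets, and bipartite graphs by the sets $B \subseteq X$ containing no odd linear dependency. The key observation to record first is that such a $B$ is precisely a subset of an affine hyperplane missing the origin: $B$ has no odd linear dependency if and only if there is a linear functional $\phi \neq 0$ with $\phi(v) = 1$ for every $v \in B$. (Summing $\phi$ over a dependency gives one direction; conversely, the system $\{\langle \phi, v\rangle = 1 : v \in B\}$ is solvable over $\mathbb{Z}_2$ precisely when $\sum_{v \in B'} v = 0$ forces $|B'|$ to be even.) These sets are the exact analogues of bipartite graphs: if $B$ is such a set, then two members $S, T$ of an odd-LD-agreeing family cannot satisfy $T = S \oplus \overline{B}$ (their agreement would be $\overline{S \oplus T} = B$, which contains no odd linear dependency), so $S \in \mathcal{F} \Rightarrow S \oplus \overline{B} \notin \mathcal{F}$, and an odd-LD-agreeing family is an independent set in the Cayley graph on $\mathbb{Z}_2^{X}$ generated by these $\overline{B}$ (complements taken in $X$).

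With this dictionary in hand, the spectral part of the proof transfers with no change. One defines OCC operators exactly as in Definition~\ref{OCC} (using odd-LD-agreeing for $p = 1/2$ and odd-LD-intersecting for $p < 1/2$); the operator $A_B f(S) = f(S \oplus \overline{B})$, and for $p < 1/2$ its skew analogue $M_B$ built from the same $2 \times 2$ matrix $M$, are OCC operators, with $\chi_R$ an eigenfunction of eigenvalue $(-1)^{|R|}\E[\chi_B(R)]$, respectively $(-p/(1-p))^{|R \cap \overline{B}|}$; and the analogues of Lemma~\ref{lemOCCSpectrum}/Lemma~\ref{lemOCCSpectrum:p} hold verbatim. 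Theorems~\ref{ThmHoffman} and~\ref{skewedHoffman} and Corollaries~\ref{corMain} and~\ref{corMain:p} then hold word for word, their proofs using only Parseval and $\langle f, Af\rangle = 0$; so everything reduces to exhibiting an OCC spectrum with $\lambda_\emptyset = 1$, $\lambda_{\min} = -p^3/(1-p^3)$ (for $p = 1/2$, $-1/7$), positive spectral gap, and all tight sets of size at most $3$. The uniqueness conclusion (a $3$-umvirate) combines with the trivial observation that a $3$-umvirate on coordinates $\{x,y,z\}$ is odd-LD-intersecting only when $x + y + z = 0$, i.e.\ only when it is a Schur-umvirate; and for $p = 1/2$ the intersecting/agreeing equivalence follows from Lemma~\ref{lemma:equivalence} together with the matroid analogue of Lemma~\ref{agree2intersect}, whose single computation becomes the remark that a $3$-set $\{x, y, w\}$ with $w \neq x + y$ contains no odd linear dependency, hence cannot be the agreement $\overline{S \oplus T}$ of two members of an odd-LD-agreeing family.

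The substance is the analogue of Claim~\ref{goodOCC1}/Claim~\ref{goodOCC1:p}, i.e.\ the construction of $\Lambda^{(1)}$. Let $\mathcal{B}$ be the distribution of the random cut $H_x := \{v \in X : \langle x, v\rangle = 1\}$ for $x$ uniform in $\mathbb{Z}_2^n$, and set $q_i(S) = \Pr_x[|S \cap H_x| = i]$ and $q_R(S) = \Pr_x[S \cap H_x \cong R]$ (matroid isomorphism). The same forced coefficients $c_0 = 1$, $c_1$, $c_2$ (pinned by $\lambda = \lambda_{\min}$ on the subsets of size at most $3$ of a Schur triple) and the same choice of $c_3$ with $c_4 = 0$ define $\Lambda^{(1)}$, and $\Lambda^{(2)} = \sum_F q_F - q_{\Box}$, with $F$ ranging over $4$-element independent sets and $\Box$ the $4$-element circuit, removes the residual tight sets just as in Claims~\ref{goodOCC2} and~\ref{goodOCC2:p}. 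What remains is the matroid form of Lemma~\ref{lemCutStatistics}, and here the vector setting is genuinely cleaner than the graph one: since $|S \cap H_x| = \langle x, \sum_{v \in S} v\rangle \bmod 2$, we obtain at once that $q_k(S) \le \tfrac12$ for every odd $k$ (if $\sum_{v \in S} v = 0$ all cuts are even, and otherwise $x \mapsto x + u$ for a fixed $u$ with $\langle u, \sum_{v \in S} v\rangle = 1$ is a fixed-point-free parity-flipping involution), while $q_0(S) = 2^{-\dim \Span(S)}$ since $S \cap H_x = \emptyset \iff x \perp \Span(S)$, $q_1(S)$ equals the number of coloops of $M(S)$ times $q_0(S)$ via the factorisation $Q_S(X) = 2^{-m}(1+X)^m Q_H(X)$ (a coloop contributes an independent free bit, $H$ is the union of the non-trivial connected components of $M(S)$, and a direct sum of matroids makes the corresponding cut-variables independent so that $Q$ multiplies), and $q_2(S) \le \tfrac34$ from $\E|S \cap H_x| = |S|/2$ together with $q_0(S) > 0$. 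The trivial facts of Lemmas~\ref{lem:trivialgraphics} and~\ref{lem:trivialgraphics:p} translate too ($q_0(S) \le \tfrac14$ whenever $|S| \ge 2$; when $m = 0$ every connected component of $M(S)$ has rank at least $2$; and so on), and the case analysis on the number of coloops $m$ then runs exactly as in the proofs of Claims~\ref{goodOCC1} and~\ref{goodOCC1:p}, Sturm-chain verifications included for $\tau \le p < 1/2$.

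The single genuinely new point is that a few of the small matroids arising in that case analysis are non-graphic; but in fact the only simple binary matroid of rank at most $3$ that is not graphic is the Fano plane $F_7$ itself (every proper restriction of $F_7$ is a restriction of $F_7 \setminus e \cong M(K_4)$, hence graphic; in particular the $5$-element restriction is the $K_4^-$-matroid and the $6$-element one is $M(K_4)$), and $F_7$ has $q_0(F_7) = 1/8$ with cut distribution otherwise concentrated on cuts of size $4$, whence $\lambda_{F_7} = -\tfrac18 (p/(1-p))^7 > -p^3/(1-p^3)$ for all $p \le 1/2$ (equivalently, $7p^2 - 17p + 7 > 0$). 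Thus $F_7 \notin \Lambda_{\min}$, and nothing else in the finite checks changes: the relevant clauses simply become ``$q_0 \le 1/16$, or $S$ is a $3$-circuit, the $K_4^-$-matroid, or $F_7$''. For $0 < p \le \tau$ one reads off the one-variable OCC spectrum of Claim~\ref{OCC:smallp}, whose construction via Lemma~\ref{lemOCCSpectrum:p} survives because $\binom{|S|}{i}$ counts the $i$-element subsets of $S$ and every subset of $X$ of size at most $2$ is free of odd dependencies; its purely numerical analysis, and Lemma~\ref{lem:main:finish}, then apply unchanged, and Kindler--Safra finishes the stability statement exactly as in Corollary~\ref{corMain:p} and Section~\ref{sec:smallp} (for $p = 1/2$, via the agreeing/intersecting reduction above). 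This proves Theorem~\ref{skewSchur}, and Theorem~\ref{thmSchur} by the same route. The main obstacle, such as it is, is not conceptual --- the identity $|S \cap H_x| = \langle x, \sum_{v \in S} v\rangle$ makes the cut-statistics lemmas \emph{easier} than in the graph case --- but rather the bookkeeping of which non-graphic small binary matroids can occur, which turns out to be essentially only $F_7$, and that is harmless.
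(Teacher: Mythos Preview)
Your proposal is correct and follows essentially the same route as the paper: replace graphs by subsets of $\mathbb{Z}_2^n\setminus\{0\}$, bipartite graphs by subsets of the affine hyperplanes $A_w=\{v:\langle w,v\rangle=1\}$, bridges by coloops $I(S)$, and re-run Sections~\ref{sec:phalf}--\ref{sec:smallp} verbatim, with the single genuinely new case being the Fano configuration $F_7=\{x,y,z,x+y,x+z,y+z,x+y+z\}$, for which $Q_{F_7}(X)=\tfrac18+\tfrac78X^4$ and hence $\lambda_{F_7}^{(1)}=-\tfrac18(p/(1-p))^7>-p^3/(1-p^3)$ on $(0,1/2]$.

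Two small remarks. First, your parenthetical ``(equivalently, $7p^2-17p+7>0$)'' is not literally the equivalence: the actual inequality $p^4(1+p+p^2)<8(1-p)^6$ is degree~$6$, though your quadratic is indeed positive on $[0,1/2]$ so the conclusion stands. Second, your matroid framing (direct-sum decomposition, connected components of $M(S)$) is slightly more structural than what the paper actually uses---it only splits off the coloops $I(S)$ from $J(S)=S\setminus I(S)$ to get $Q_S=2^{-m}(1+X)^mQ_{J(S)}$, without further decomposing $J(S)$---but your version is correct and subsumes this.
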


We may deduce Theorem \ref{main} from this by `lifting' a family of graphs to a family of subsets of \(\mathbb{Z}_{2}^{n} \setminus \{0\}\), in the obvious way. In detail, let \(\mathcal{F}\) be an odd-cycle-intersecting family of graphs. Let
\[\mathcal{H} = \{F \cup S:\ F \in \mathcal{F}, S \subset \{0,1\}^{n} \setminus ([n]^{(2)} \cup \{0\})\};\]
then \(\mu_{p}(\mathcal{H}) = \mu_{p}(\mathcal{F})\), and \(\mathcal{H}\) is odd-LD-intersecting, since an odd cycle is lifted to an odd linear dependency.

It seems impossible to deduce Theorem \ref{thmSchur} from Theorem \ref{main}, so Theorem \ref{thmSchur} is in some sense a {\em bona-fide} generalization. The calculations required to prove Theorem \ref{thmSchur} require one extra special case to be checked, but are in some ways simpler and more elegant, suggesting that this is the correct setting for our ideas. Indeed, we make crucial use of the fact that the {\em ground set} (as well as its power set) lives inside a vector space over \(\mathbb{Z}_{2}\).

We will focus on the case of the uniform measure, Theorem \ref{thmSchur}, and only mention briefly how to prove Theorem \ref{skewSchur}.

\subsection{Cayley operators} \label{sectionSchurCayley}
First, some preliminaries. If \(S \subset \mathbb{Z}_2^n \setminus \{0\}\), we write \(\rank(S) = \dim(\Span(S))\) for the dimension of the subspace spanned by \(S\). Let \(I(S) = \{v \in S:\ v \notin \Span(S \setminus \{v\})\}\) be the subset of \(S\) consisting of vectors which do not appear in any linear dependency of \(S\), and let \(m(S) = |I(S)|\). We write \(J(S) = S \setminus I(S)\) for the union of the linearly dependent subsets of \(S\).

For \(x,y \in \mathbb{Z}_{2}^n\), we write
\[\langle x,y \rangle = \sum_{i=1}^{n}x_i y_i\]
for the standard bilinear form on \(\mathbb{Z}_{2}^{n}\). If \(S \subset \mathbb{Z}_{2}^n\), we write
\[S^{\perp} = \{x \in \mathbb{Z}_{2}^{n}:\ \langle x,v \rangle = 0\}.\]
Then \(S^{\perp}\) is a subspace of \(\mathbb{Z}_{2}^{n}\), satisfying \(\dim(S^{\perp})+\dim(\Span(S)) = n\).

Recall that an {\em affine subspace} of a vector space \(V\) is a subset of \(V\) of the form \(U+a\), where \(U\) is a vector subspace of \(V\), and \(a \in V\) --- i.e., it is a translate of a subspace. If \(U\) has dimension \(d\), then \(U+a\) is also said to have dimension \(d\). If \(V\) has dimension \(n\), an affine subspace of \(V\) with dimension \(n-1\) is called an {\em affine hyperplane} of \(V\).

We will need the following easy lemma:
\begin{lem}
\label{lemma:affinesubspace}
If \(S = \{v_1,\ldots,v_d\} \subset \mathbb{Z}_{2}^{n}\) is linearly independent, then for any \(r_1,\ldots,r_d \in \{0,1\}\), the set
\[A:=\{x \in \mathbb{Z}_2^n:\ \langle v_i,x \rangle = r_i\ \forall i \in [d]\}\]
is a translate of \(S^{\perp}\), and is therefore an affine subspace with dimension \(n-d\).
\end{lem}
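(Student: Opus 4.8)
The plan is to verify directly that $A$ is a coset of $S^{\perp}$, which immediately gives the dimension claim via the standard identity $\dim(S^{\perp}) + \dim(\Span(S)) = n$ recorded just above the statement. First I would observe that $A$ is nonempty: since $S = \{v_1,\ldots,v_d\}$ is linearly independent, it extends to a basis $v_1,\ldots,v_d,v_{d+1},\ldots,v_n$ of $\mathbb{Z}_2^n$; writing $w_1,\ldots,w_n$ for the dual basis (so $\langle v_i, w_j\rangle = \delta_{ij}$), the vector $a := \sum_{i=1}^d r_i w_i$ satisfies $\langle v_i, a\rangle = r_i$ for all $i \in [d]$, hence $a \in A$. (One could equally invoke non-degeneracy of the bilinear form: the map $x \mapsto (\langle v_1,x\rangle,\ldots,\langle v_d,x\rangle)$ is a surjective linear map $\mathbb{Z}_2^n \to \mathbb{Z}_2^d$, because its transpose sends the linearly independent tuple back to a linearly independent tuple, so every target value, in particular $(r_1,\ldots,r_d)$, is attained.)

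Next I would show $A = a + S^{\perp}$. For any $x \in \mathbb{Z}_2^n$ we have $x \in A$ iff $\langle v_i, x\rangle = r_i = \langle v_i, a\rangle$ for all $i\in[d]$, iff $\langle v_i, x - a\rangle = 0$ for all $i\in[d]$; and since $S$ spans $\Span(S)$, this last condition is equivalent to $\langle u, x-a\rangle = 0$ for all $u \in \Span(S)$, i.e. to $x - a \in (\Span(S))^{\perp} = S^{\perp}$. (Over $\mathbb{Z}_2$, $x - a = x + a$, so this reads $x \in a + S^{\perp}$.) Thus $A = a + S^{\perp}$ is a translate of the subspace $S^{\perp}$, hence an affine subspace, and $\dim A = \dim S^{\perp} = n - \dim(\Span(S)) = n - d$, using that $S$ is linearly independent so $\dim(\Span(S)) = d$.

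There is really no obstacle here; the only thing to be slightly careful about is the nonemptiness of $A$, which is exactly where linear independence of $S$ is used (if $S$ were dependent the defining system could be inconsistent). Everything else is the routine correspondence between solution sets of linear systems and cosets of the kernel, specialized to the symmetric bilinear form on $\mathbb{Z}_2^n$.
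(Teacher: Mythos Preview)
Your proof is correct and takes essentially the same approach as the paper: produce a particular solution $a$ with $\langle v_i,a\rangle = r_i$ and conclude $A = a + S^\perp$. The only cosmetic difference is that the paper picks each dual vector directly as any element of $(S\setminus\{v_i\})^\perp \setminus S^\perp$ via a dimension count, whereas you extend $S$ to a full basis and take the dual basis; both yield vectors $y_i$ (your $w_i$) with $\langle v_j,y_i\rangle = \delta_{ij}$, and the rest is identical.
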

\begin{proof}
For each \(i \in [d]\) choose a vector \(y_i \in (S \setminus \{v_i\})^{\perp} \setminus S^{\perp}\). Note that \((S\setminus \{v_{i}\})^{\perp}\) is a subspace of dimension \(n-d+1\), and \(S^{\perp}\) is a subspace of dimension \(n-d\), so \((S \setminus \{v_i\})^{\perp} \setminus S^{\perp}\) is certainly nonempty. Moreover, \(\langle y_i,v_j \rangle = \delta_{i,j}\). Let
\[a = \sum_{i=1}^{d} r_i y_i;\]
then
\[A = a+S^{\perp},\]
as required.
\end{proof}

In particular, if \(w \in \mathbb{Z}_{2}^{n} \setminus \{0\}\) then a set of the form
\[A_w := \{v \in \mathbb{Z}_2^n:\ \langle v,w \rangle = 1\}\]
is an affine hyperplane. The following simple observation drives our whole approach:

\begin{lem}
An affine hyperplane of the form
\[A_w = \{v \in \mathbb{Z}_2^n:\ \langle v,w \rangle = 1\}\quad (w \in \mathbb{Z}_{2}^{n} \setminus \{0\})\]
contains no odd linear dependency.
\end{lem}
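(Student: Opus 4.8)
The plan is to use nothing more than the bilinearity of $\langle \cdot,\cdot \rangle$ over $\mathbb{Z}_2$ together with a parity count. Suppose for contradiction that $A_w$ contained an odd linear dependency, i.e.\ that there were an integer $l$ and vectors $v_1,\ldots,v_{2l+1} \in A_w$ with
\[ v_1 + v_2 + \cdots + v_{2l+1} = 0. \]
I would then apply the linear functional $\langle \cdot, w\rangle \colon \mathbb{Z}_2^n \to \mathbb{Z}_2$ to both sides.

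On the one hand, since the functional is linear and the sum is $0$, we get $\langle v_1 + \cdots + v_{2l+1}, w\rangle = \langle 0, w \rangle = 0$. On the other hand, expanding by linearity and using that each $v_i \in A_w$, i.e.\ $\langle v_i, w \rangle = 1$, we get
\[ \langle v_1 + \cdots + v_{2l+1}, w \rangle = \sum_{i=1}^{2l+1} \langle v_i, w \rangle = 2l+1 = 1 \quad \text{in } \mathbb{Z}_2. \]
These two computations give $0 = 1$ in $\mathbb{Z}_2$, a contradiction. Hence no such dependency exists.

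There is no real obstacle here: the statement is essentially immediate once one observes that membership in $A_w$ is precisely the condition that a fixed $\mathbb{Z}_2$-linear functional takes the value $1$, and that summing an odd number of $1$'s in $\mathbb{Z}_2$ gives $1$, not $0$. (The same argument shows more generally that $A_w$ contains no linearly dependent set of odd size, which is exactly the ``minimal'' form of an odd linear dependency, and it is the precise analogue of the fact, used throughout the paper, that a bipartite graph — being a subgraph of a complete bipartite graph, i.e.\ a ``hyperplane'' cut — contains no odd cycle.)
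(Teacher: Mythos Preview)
Your proof is correct and is essentially identical to the paper's: both apply the linear functional $\langle \cdot, w\rangle$ and use that the sum of an odd number of $1$'s in $\mathbb{Z}_2$ is $1$, not $0$. The only cosmetic difference is that you phrase it as a contradiction while the paper argues directly that $\langle \sum_i v_i, w\rangle = 1$ forces $\sum_i v_i \neq 0$.
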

\begin{proof}
If \(v_1,\ldots,v_{2l+1} \in A_w\) then
\[\langle \sum_{i=1}^{2l+1} v_i,w \rangle = \sum_{i=1}^{2l+1} \langle v_i,w \rangle = 1,\]
so \(\sum_{i=1}^{2l+1}v_i \neq 0\).
\end{proof}

This motivates the following
\begin{dfn}
A {\em hyperplane subset} is a subset of some \(A_w\), where \(w \in \mathbb{Z}_{2}^{n}\).
\end{dfn}

Observe that if \(A\) is a hyperplane subset, \(\mathcal{F}\) is odd-LD-agreeing, and \(S \in \mathcal{F}\), then \(S \oplus \overline{A} \notin \mathcal{F}\). A hyperplane subset will be the analogue of a bipartite graph. Indeed, a bipartite graph \(G\) with bipartition \((X,Y)\) is a subset of \(A_w\) if we define \(w_i = \boldsymbol{1}\{i \in X\}\).

If \(w\) is a vector chosen uniformly at random from \(\mathbb{Z}_2^n\), and \(S \subset \{0,1\}^n\), we write \(Y_{S} = |S \cap A_w|\). Note that \(S \cap A_w\) is the analogue of a random cut in a graph. Indeed, if \(S \subset [n]^{(2)}\), i.e. \(S\) is a graph, then \(S \cap A_w\) is precisely a random cut in \(S\), and \(Y_S\) is the number of edges in a random cut in \(S\). We write \(Q_{S}(X)\) for the probability-generating function of the random variable \(Y_S\).

We now have a Cayley graph on \(\mathbb{Z}_{2}^{\{0,1\}^n \setminus \{0\}}\) (rather than \(\mathbb{Z}_{2}^{[n]^{(2)}}\)), where the generating set is \(\{\overline{A}: A\textrm{ is a hyperplane subset}\}\).

\begin{dfn}\label{OLDC}
A linear operator $A$ on real-valued functions on $\mathbb{Z}_2^{\{0,1\}^n \setminus \{0\}}$ will be called {\em Odd-Linear-Dependency-Cayley},
or {\em OLDC} for short, if it has the following two properties:
\begin{enumerate}
\item If $\cF$ is an odd-LD-agreeing family, and $f$ is its characteristic function, then
\[f(S)=1\Rightarrow Af(S)=0.\]
\item The Fourier-Walsh basis is a complete set of eigenfunctions of \(A\).
\end{enumerate}
The vector of eigenvalues of an OLDC operator, indexed by the subsets of $\{0,1\}^{n} \setminus \{0\}$, will be called an {\em
OLDC spectrum}.
\end{dfn}
We have the following analogue of Corollary \ref{cor:qB}:

\begin{cor}
\label{cor:genqB}
Let $w$ be a uniform random vector in \(\mathbb{Z}_2^n\), i.e. where each component is independently chosen to be \(0\) or \(1\) with probability \(1/2\). Let
\[A_w = \{v \in \mathbb{Z}_2^n:\ \langle v,w \rangle = 1\}.\]
For any subset \(S \subset \mathbb{Z}_{2}^n\), let
$$
q_i(S) = \Pr[ |S \cap A_w|= i ],
$$
and for any set \(R \subset \mathbb{Z}_{2}^n\) with no non-trivial odd linear dependency, let
$$
q_R(S) = \Pr[ (S \cap A_w) \isomorphicto R],
$$
where $H \isomorphicto R$ means that there is a linear isomorphism of \(\mathbb{Z}_2^n\) mapping $H$ to $R$, and all probabilities are over the choice of the
random vector \(w\). Then for any integer $i$,
$$
 \lambda_S = (-1)^{|S|} q_i(S)
$$
 is an OLDC spectrum, and for any set \(R \subset \mathbb{Z}_{2}^n\) with no non-trivial odd linear dependency,
$$
(-1)^{|S|} q_R(S)
$$
is an OLDC spectrum.
\end{cor}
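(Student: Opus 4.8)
The plan is to imitate, in the ambient vector space $\mathbb{Z}_2^n$, the two-step derivation behind Corollary~\ref{cor:qB}: first describe the elementary Cayley operator attached to a single hyperplane subset (the analogue of Claim~\ref{claimOCC}), then pass to linear combinations and averages (the analogue of Lemma~\ref{lemOCCSpectrum}), and finally substitute the right test functions. For a hyperplane subset $A$ (so $A \subseteq A_w$ for some $w$), let $A_A$ denote the operator $A_A f(S) = f(S \oplus \overline{A})$ on real-valued functions on $\mathbb{Z}_2^{\{0,1\}^n \setminus \{0\}}$. First I would check that $A_A$ is an OLDC operator in the sense of Definition~\ref{OLDC}. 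Property~(1) is exactly the observation recorded just after the definition of hyperplane subsets: $\overline{S \Delta (S \oplus \overline{A})} = A$ carries no odd linear dependency, so if $\cF$ is odd-LD-agreeing with indicator $f$ then $f(S) = 1$ forces $S \oplus \overline{A} \notin \cF$, i.e.\ $A_A f(S) = 0$. Property~(2) holds because $A_A$ is a convolution operator (convolution with the indicator of $\{\overline{A}\}$); explicitly, $A_A \chi_R(S) = \chi_R(S \oplus \overline{A}) = \chi_R(\overline{A})\,\chi_R(S)$, so $\chi_R$ is an eigenfunction with eigenvalue $\chi_R(\overline{A}) = (-1)^{|R \cap \overline{A}|} = (-1)^{|R|}\chi_A(R \cap A)$.

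Next I would establish the OLDC version of Lemma~\ref{lemOCCSpectrum}. Since any subset $A'$ of a hyperplane subset $A$ is itself a hyperplane subset, the preceding step shows that $(-1)^{|S|}\chi_{A'}(S \cap A') = (-1)^{|S|}\chi_{A'}(S \cap A)$ is an OLDC spectrum for every $A' \subseteq A$, the equality using $A' \subseteq A$. OLDC spectra form a linear space, and the characters $\{\chi_{A'} : A' \subseteq A\}$, regarded as functions on the subsets of $A$, are the Fourier--Walsh basis of $\mathbb{R}[\mathbb{Z}_2^A]$, hence span every real-valued function on $\mathcal{P}(A)$; expanding an arbitrary $f_A \colon \mathcal{P}(A) \to \mathbb{R}$ in this basis then shows that $(-1)^{|S|} f_A(S \cap A)$ is an OLDC spectrum. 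Averaging over $A$ drawn from any distribution $\mathcal{A}$ on hyperplane subsets (again by linearity of the space of OLDC spectra) gives that $\lambda_S = (-1)^{|S|}\,\Exp_{A \sim \mathcal{A}}[f_A(S \cap A)]$ is an OLDC spectrum. To conclude, take $\mathcal{A}$ to be the law of $A_w$ with $w$ uniform in $\mathbb{Z}_2^n$ (note $0 \notin A_w$ always, and $A_0 = \emptyset$, so every $A_w$ is a legitimate hyperplane subset), and specialise $f_A$: choosing $f_A(T) = \mathbf{1}\{|T| = i\}$ gives $\Exp[f_{A_w}(S \cap A_w)] = q_i(S)$, and choosing $f_A(T) = \mathbf{1}\{T \isomorphicto R\}$ gives $\Exp[f_{A_w}(S \cap A_w)] = q_R(S)$; in both cases $f_A$ is a genuine function on the subsets of $A_w$, so the two claimed OLDC spectra follow immediately.

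I expect no real obstacle here: the argument is a faithful transcription of the graph case, with ``bipartite graph''/``random cut'' replaced throughout by ``hyperplane subset''/``$S \cap A_w$''. The only points needing care are the eigenvalue bookkeeping $\chi_R(\overline{A}) = (-1)^{|R|}\chi_A(R \cap A)$ and the remark that one never needs hyperplane subsets lying outside a given $A$ --- closure of the class of hyperplane subsets under taking subsets is precisely what makes the spanning argument in the second step go through. Finally, the hypothesis that $R$ carry no non-trivial odd linear dependency is not used in the proof; it merely discards a vacuous case, since $S \cap A_w$ always lies inside the affine hyperplane $A_w$ and so contains no odd linear dependency, whence $q_R \equiv 0$ whenever $R$ does.
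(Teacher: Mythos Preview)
Your proposal is correct and follows exactly the route the paper intends: the corollary is stated there without proof precisely because it is the verbatim OLDC transcription of Claim~\ref{claimOCC} $\to$ Lemma~\ref{lemOCCSpectrum} $\to$ Corollary~\ref{cor:qB}, with bipartite graphs replaced by hyperplane subsets. Your write-up fills in all the details (eigenvalue bookkeeping, closure of hyperplane subsets under passing to subsets, the edge case $w=0$) correctly.
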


\subsection{Construction of the OLDC spectrum} \label{sectionSchurCutStatistics}
Note that if \(S = \{v_1,\ldots,v_d\}\) is linearly independent, then \(\{\langle v_i,w \rangle:\ i \in [d]\}\) is an independent system of \(\textrm{Bin}(1,1/2)\) random variables. To see this, observe that for any \(r_1,\ldots,r_d \in \{0,1\}\), by Lemma \ref{lemma:affinesubspace},
\[\{w \in \mathbb{Z}_2^n:\ \langle v_i,w \rangle = r_i\ \forall i \in [d]\}\]
is an affine subspace of \(\mathbb{Z}_{2}^{n}\) of dimension \(n-d\), and therefore has size \(2^{n-d}\).

It follows that for each \(v \in I(S)\), the \(\textrm{Bin}(1,1/2)\) random variable \(\langle v,w \rangle\) is independent of the system \(\{\langle v',w \rangle:\ v' \in S \setminus \{v\}\}\). Hence, for any \(S\), if \(I(S) = \{v_1,\ldots,v_m\}\) and \(J(S) = J = \{u_1,\ldots,u_l\}\) then
\[Y_S = \sum_{i=1}^{m} Y_{\{v_i\}} + Y_{J}.\]
write
\[Q_{J}(X) = \sum_{i \geq 0}a_{i}X^{i};\]
note that \(a_{1}=0\). We have
\begin{eqnarray}
\label{eq:pgfexpansion}
Q_{S}(X) & = & (\tfrac{1}{2}+\tfrac{1}{2}X)^{m}Q_{J}(X) \nonumber \\
& = & \tfrac{1}{2^{m}} (1+X)^{m}(a_{0}+a_{2}X^{2}+a_{3}X^3+\ldots) \nonumber\\
& = & \tfrac{1}{2^{m}}\left(1+mX+\tbinom{m}{2}X^2+\tbinom{m}{3}X^3+\ldots\right) \left(a_{0}+a_{2}X^{2}+a_{3}X^3+\ldots\right) \nonumber\\
& = & \tfrac{1}{2^{m}}\left(a_{0}+ma_{0}X+\left(\tbinom{m}{2}a_{0}+a_{2}\right)X^2+\left(\tbinom{m}{3}a_0+ma_{2}+a_{3}\right)X^3+R(X)X^4\right),
\end{eqnarray}
where \(R(X) \in \mathbb{Q}[X]\); this is the exact analogue of (\ref{eq:pgfexp}).

The `same' spectrum which worked before turns out to work in the new setting also:

\begin{claim}\label{gengoodOCC1}
Let $\Lambda^{(1)}$ be the OLDC spectrum described by
$$
\lambda^{(1)}_S = (-1)^{|S|} \left[ q_0(S) -\frac{5}{7}q_1(S) -\frac{1}{7}q_2(S) + \frac{3}{28} q_3(S)\right],
$$
Then
\begin{itemize}
\item $\lambda^{(1)}_\emptyset = 1.$
\item $\lambda^{(1)}_{\min}=-1/7.$
\item $\Lambda^{(1)}_{\min}$ consists of all singletons, all sets of size 2, all linearly independent sets of size 4, and all sets of the form \(\{x,y,z,x+y,x+z\}\).
\item For all $S \not \in \Lambda^{(1)}_{\min}$ it holds that
    $\lambda^{(1)}_S \ge -1/7 + \gamma'$, with $\gamma' = 1/56$.
\end{itemize}
\end{claim}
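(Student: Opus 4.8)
The strategy is to mirror the proof of Claim \ref{goodOCC1}, replacing "graph with $m$ bridges and biconnected-component union $H$" by "set $S$ with $m(S)$ linearly-independent vectors and linearly-dependent part $J(S)$". The key structural input is the factorisation $Y_S = \sum_{i=1}^m Y_{\{v_i\}} + Y_J$, where the summands are independent $\mathrm{Bin}(1,1/2)$ variables for $v_i \in I(S)$, together with the resulting expansion \eqref{eq:pgfexpansion}. From this, exactly as in Lemma \ref{lemCutStatistics}(2) one gets $q_1(S) = m(S) q_0(S)$, and $q_0(S) = 2^{-\rank(S)}$. I would first record the analogues of Lemmas \ref{lemCutStatistics} and \ref{lem:trivialgraphics} in this language: (i) $q_0(S) = 2^{-\rank(S)}$, so $q_0(S) \le 1/2$ unless $|S|\le 1$, and $q_0(S)\le 1/4$ unless $S$ is a singleton or $\emptyset$ (actually unless $\rank(S)\le 1$, i.e.\ $S\subseteq\{v\}$); (ii) an odd-parity argument — if $w$ is chosen so that $\langle v,w\rangle$ is flipped for a single $v$ appearing with odd multiplicity in some dependency, the cut size changes parity — giving $q_k(S)\le 1/2$ for odd $k$; (iii) $q_2(S)\le 3/4$ via the same first-moment estimate $\sum k q_k(S) = |S|/2$ combined with checking the small linearly-dependent sets explicitly; (iv) if $m(S)=0$ and $|S|$ is odd and "large" then $q_0(S) = 2^{-\rank(S)}\le 1/16$.

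With these in hand the computation is formally identical to the proof of Claim \ref{goodOCC1}. Write $f(S) = q_0(S) - \tfrac57 q_1(S) - \tfrac17 q_2(S) + \tfrac{3}{28}q_3(S)$ and split on the parity of $|S|$. For odd $|S|$: using $q_1 = m q_0$ reduces to $f(S) = (1-\tfrac57 m)q_0(S) - \tfrac17 q_2(S) + \tfrac{3}{28}q_3(S)$; the cases $m\ge 2$, $m=1$, and $m=0$ are handled exactly as before, with the endpoint cases being a single vector ($f = -1/7$), and — in place of $K_3$ and $K_4^-$ — the exceptional linearly-dependent sets of small rank with $m=0$, which here are a linearly dependent triple $\{x,y,x+y\}$ and the rank-$3$ set $\{x,y,z,x+y,x+z\}$; one checks $f = 1/7$ on these by direct calculation of their cut distributions. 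For even $|S|$: plug \eqref{eq:pgfexpansion} into $f$ to get $f(S) = 2^{-m}[(1 - \tfrac57 m - \tfrac17\binom m2 + \tfrac{3}{28}\binom m3)a_0 + (-\tfrac17 + \tfrac{3}{28}m)a_2 + \tfrac{3}{28}a_3]$, and run through $m = 0,1,\ldots$ using $a_2\le 3/4$, $a_3\ge 0$, and $a_0 \le 1/4$ (valid whenever $J(S)\ne\emptyset$, which holds when $|S|$ is even), with the $m\ge 7$ tail dispatched by monotonicity of the $a_0$-coefficient, precisely as in Claim \ref{goodOCC1}. Equality $f(S) = -1/7$ on the even side occurs exactly when $J(S) = \emptyset$ (so $S$ is an independent set) and $m(S)\in\{2,4\}$, i.e.\ $S$ is an independent set of size $2$ or $4$.

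The "one extra special case" flagged in the text is the main thing to watch: I expect the only genuinely new work over the graph case is identifying and computing cut statistics for the rank-$3$, $5$-element configuration $\{x,y,z,x+y,x+z\}$ — which has no graph analogue among small graphs because in the graph setting a $5$-edge biconnected graph on $\le 4$ vertices is forced to be $K_4^-$, whereas here the matroid is richer. The potential obstacle is verifying that there are no \emph{other} rank-$3$ linearly-dependent sets $S$ with $m(S)=0$, $|S|$ odd and small that achieve $f(S) = 1/7$, and that the list in Lemma \ref{lem:trivialgraphics:p}-style case analysis is complete; this amounts to enumerating the finitely many linearly-dependent subsets of $\mathbb{Z}_2^n$ of rank $\le 3$ (equivalently, subsets of the Fano plane $PG(2,2)$) with no vector in $I(S)$, computing $Q_S$ for each, and confirming the claimed minimum set $\Lambda^{(1)}_{\min}$ and gap $\gamma' = 1/56$. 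Everything else is a transcription of the $p=1/2$ graph argument.
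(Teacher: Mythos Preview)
Your overall plan is correct and matches the paper's proof: transcribe the argument of Claim~\ref{goodOCC1} using equation~(\ref{eq:pgfexpansion}) and the set-theoretic analogues of Lemmas~\ref{lemCutStatistics} and~\ref{lem:trivialgraphics}, splitting on the parity of \(|S|\) and on \(m(S)\).

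However, you have misidentified the ``one extra special case''. The set \(\{x,y,z,x+y,x+z\}\) is \emph{not} new: it is precisely the analogue of \(K_4^-\), with the \emph{same} cut distribution \((q_0,\ldots,q_4)=(\tfrac18,0,\tfrac14,\tfrac12,\tfrac18)\) (indeed \(K_4^-\), viewed as a subset of \(\{0,1\}^n\) via its edges, is linearly isomorphic to this set). So your claim that it ``has no graph analogue'' is wrong, and checking it is already part of the transcription.

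The genuinely new case, which you do not mention, is the seven-element set
\[
S=\{x,y,z,x+y,x+z,y+z,x+y+z\},
\]
i.e.\ all nonzero vectors of a three-dimensional subspace. This appears in the rank-\(\le 3\), \(m(S)=0\), \(|S|\) odd enumeration (your ``subsets of the Fano plane'' remark), and it is the only such set with no graph counterpart. Here \(\langle\,\cdot\,,w\rangle\) restricted to \(\Span(S)\) is either identically zero (probability \(1/8\)) or a nonzero linear functional, in which case exactly four of the seven nonzero vectors lie in \(A_w\). Thus \(Q_S(X)=\tfrac18+\tfrac78 X^4\), giving \(f(S)=\tfrac18=\tfrac17-\tfrac{1}{56}\), so \(\lambda^{(1)}_S=-\tfrac18\) sits exactly on the spectral-gap boundary. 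Missing this case is a real gap: without it you cannot certify that \(\gamma'=1/56\) is attained but not violated on the odd side.

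One minor slip: your parenthetical ``\(a_0\le 1/4\) (valid whenever \(J(S)\ne\emptyset\), which holds when \(|S|\) is even)'' is not right as stated; \(J(S)=\emptyset\) when \(S\) is linearly independent of size \(2\) or \(4\). You use the bound correctly in the case analysis (only when \(m\) is odd, forcing \(J(S)\ne\emptyset\) since \(|S|\) is even), but the parenthetical should be fixed.
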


We remark that sets of size 2 are the analogue of 2-forests, linearly independent sets of size 4 are the analogue of 4-forests, and \(\{x,y,z,x+y,x+z\}\) is the analogue of \(K_4^{-}\). All sets of size 2 (and all linearly independent sets of size 4) are isomorphic, so unlike in the graph case, we need not distinguish between them.

In order to prove Claim \ref{gengoodOCC1}, we need the following generalization of Lemma \ref{lemCutStatistics}:
\begin{lem}
\label{lem:genstatistics} Let \(S\) be a set of vectors.
\begin{enumerate}
\item \(q_0(S) = 2^{-\rank(S)}\).
\item \(q_1(S)=m(S)q_0(S) = m(S)2^{-\rank(S)}\).
\item If there exists \(i \in [n]\) such that \(|\{v \in S: v(i)=1\}|\) is odd, then \(q_{k}(S) \leq 1/2\) for any \(k \geq 0\).
\item For any odd \(k\), \(q_{k}(S) \leq 1/2\).
\item Always \(q_{2}(S) \leq 3/4\).
\end{enumerate}
\end{lem}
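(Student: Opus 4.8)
The strategy is to mirror, essentially verbatim, the proof of Lemma~\ref{lemCutStatistics}, replacing ``random cut in a graph'' by ``random intersection $S \cap A_w$'', ``connected component'' by ``the span structure of $S$'', ``bridge'' by ``linearly independent vector of $S$ (element of $I(S)$)'', and ``vertex of odd degree'' by ``coordinate $i$ with an odd number of vectors of $S$ having a $1$ there''. The key point, already established just above the lemma statement, is that the random variables $\langle v,w\rangle$, as $v$ ranges over a linearly independent set, are i.i.d. $\mathrm{Ber}(1/2)$, and more generally that for each $v \in I(S)$ the bit $\langle v,w\rangle$ is independent of all the other bits $\{\langle v',w\rangle : v' \in S\setminus\{v\}\}$. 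This is the substitute for the block/bridge decomposition of a graph.

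\emph{Proof sketch of the five items.} For (1): $S\cap A_w = \emptyset$ iff $\langle v,w\rangle = 0$ for every $v\in S$, which is equivalent to $w\in S^\perp$, equivalently $w$ annihilates a basis of $\Span(S)$; since $\dim S^\perp = n-\rank(S)$, this event has probability $2^{-\rank(S)}$. For (2): this is read off directly from the coefficient of $X$ in the expansion (\ref{eq:pgfexpansion}), namely $q_1(S) = \tfrac{1}{2^m}\,m\,a_0 = m(S)\,q_0(S)$, using $a_0 = Q_J(0) = q_0(J) = 2^{-\rank(J)} = 2^{-\rank(S)}$ since $\rank(J(S)) = \rank(S)$. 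For (3): fix a coordinate $i$ with $|\{v\in S: v(i)=1\}|$ odd; the involution $w \mapsto w \oplus e_i$ on $\mathbb{Z}_2^n$ pairs up the sample space, and for the paired colourings $w,w'$ we have $Y_S(w) + Y_S(w') \equiv \sum_{v\in S}(\langle v,w\rangle + \langle v,w\oplus e_i\rangle) = \sum_{v\in S} v(i) \equiv 1 \pmod 2$ when restricted appropriately --- more carefully, as in the graph case, one argues that $Y_S(w) \ne Y_S(w')$ because the number of $v\in S$ with $v(i)=1$ that flip is odd, so $Y_S$ changes parity; hence at most one member of each pair achieves the value $k$, giving $q_k(S)\le 1/2$. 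For (4): by (3) we may assume that every coordinate $i$ has $|\{v\in S: v(i)=1\}|$ even, i.e. $\sum_{v\in S} v = 0$ in $\mathbb{Z}_2^n$; but then for every $w$, $Y_S(w) = |S\cap A_w|$ satisfies $\sum_{v\in S}\langle v,w\rangle = \langle \sum_v v, w\rangle = 0$, so $Y_S$ is always even and $q_k(S)=0$ for odd $k$. For (5): copy the averaging argument from Lemma~\ref{lemCutStatistics}(5) --- $\mathbb{E}[Y_S] = |S|/2$ and $q_0(S)>0$ force $|S|/2 < 2q_2(S) + (1-q_2(S))|S|$, giving $q_2(S) < \tfrac12 + \tfrac{1}{|S|-2}$, hence $q_2(S)<3/4$ for $|S|\ge 6$; for $|S|\le 5$ one uses the $I(S)/J(S)$ splitting exactly as bridges were used, reducing to the case $S=J(S)$ with $|S|\le 5$, which (by the analogue of Lemma~\ref{lem:trivialgraphics:p}(2), i.e. the classification of small linearly-dependent-generated sets) leaves only a short explicit list of sets $R$ for which $q_2(R)$ is computed by hand, and in the presence of a vector in $I(S)$ item (3) already gives $q_2(S)\le 1/2$.

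\emph{Where the work is.} The only genuinely new ingredient compared to the graph case is item (5) for small $|S|$: in the graph setting the small biconnected blocks were $K_3, C_4, C_5, K_4^-$, whereas here one must enumerate the finitely many isomorphism types (up to linear isomorphism of $\mathbb{Z}_2^n$) of sets $S$ with $S = J(S)$ and $|S| \le 5$, and check $q_2 \le 3/4$ for each. This is the ``one extra special case'' alluded to in the remark after Theorem~\ref{skewSchur}; it is a finite verification and presents no conceptual difficulty, but it is the step that does not come for free from the graph argument. Everything else is a direct translation, since (\ref{eq:pgfexpansion}) is set up to be ``the exact analogue of (\ref{eq:pgfexp})'', so items (1)--(4) transfer line for line.
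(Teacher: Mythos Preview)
Your approach is essentially the paper's approach, and it is correct. Two small points are worth flagging.

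In item (2) your parenthetical ``$a_0 = q_0(J) = 2^{-\rank(J)} = 2^{-\rank(S)}$ since $\rank(J(S)) = \rank(S)$'' is wrong: in fact $\rank(S) = m(S) + \rank(J(S))$ (each $v\in I(S)$ is independent modulo $\Span(J(S))$). Fortunately you don't need this claim; from the generating-function identity $Q_S(X) = (\tfrac12+\tfrac12 X)^{m}Q_J(X)$ you already get $q_0(S)=2^{-m}a_0$ and $q_1(S)=2^{-m}m a_0 = m(S)\,q_0(S)$, and then item (1) gives the value $m(S)2^{-\rank(S)}$. The paper instead argues (2) directly: $|S\cap A_w|=1$ forces the unique surviving vector to lie in $I(S)$ (a vector in a dependency cannot be the sole vector in $A_w$, by linearity of $\langle\cdot,w\rangle$), and for each $v\in I(S)$ the set $(S\setminus\{v\})^\perp\setminus S^\perp$ has size $2^{n-\rank(S)}$.

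In item (5) for $|S|\le 5$, your reduction (if $I(S)\neq\emptyset$ then some coordinate has odd count, hence $q_2\le 1/2$ by (3)) is correct --- indeed $\sum_{v\in S}v=0$ would force $I(S)=\emptyset$ --- but leaves you with the condition $m(S)=0$, which yields four isomorphism types to check (including the $K_4^-$ analogue $\{x,y,z,x+y,x+z\}$, with $q_2=1/4$). The paper is slightly more efficient: it reduces via (3) directly to $\sum_{v\in S}v=0$, then takes a minimal dependent $T\subset S$, notes $S\setminus T$ also sums to zero with $|S\setminus T|\le 2$, hence $S=T$; this leaves only three types. Either route works. (Your remark about ``the one extra special case'' is a misattribution: that phrase in the paper refers to the seven-element set $\{x,y,z,x+y,x+z,y+z,x+y+z\}$ arising later in Claim~\ref{gengoodOCC1}, not to this lemma.)
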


\begin{proof} We follow the items of the lemma.
\begin{enumerate}
\item Observe that \(q_0(S)\) is the probability that \(w\) lies in the subspace \(S^{\perp}\), which has dimension \(n - \rank(S)\), and therefore size \(2^{n-\rank(S)}\), proving 1.
\item Observe that \(|S \cap A_w|=1\) if and only if \(w \in (S \setminus \{v\})^{\perp} \setminus S^{\perp}\) for some \(v \in I(S)\): if $v \in S \cap A_w$ participates in some linear dependency $v = \sum v_i$, then linearity of the inner product implies that not all the $v_i$ can be outside $A_w$. The sets \(\{(S \setminus \{v\})^{\perp} \setminus S^{\perp}:\ v \in I(S)\}\) are disjoint, and each has size \(2^{n-\rank(S)}\), proving 2.
\item Let
\[T_k = \{w \in \{0,1\}^n:\ |S \cap A_w|=k\}.\]
Observe that for any \(w \in T_k\), \(w + e_i \notin T_k\), where \(e_i\) denotes the vector \((0,0,\ldots,0,1,0,\ldots,0)\) with a 1 in the \(i\)th place (cf. the corresponding part in the proof of Lemma~\ref{lemCutStatistics}). It follows that \(|T_k| \leq 2^{n-1}\), i.e. \(q_k(S) \leq 1/2\), proving 3.
\item By item 3, we may assume that for each \(i \in [n]\), \(|\{v \in S: v(i)=1\}|\) is even. But then for any \(w \in \mathbb{Z}_2^n\), \(\sum_{s \in S} \langle s,w \rangle = 0\) (since every $w(i)$ is summed an even number of times), and therefore \(|\{s \in S:\ \langle s,w \rangle = 1\}|\) is even. Hence \(q_k(S)=0\) for any odd \(k \in \mathbb{N}\), proving 4.
\item The average size of \(|S \cap A_w|\) is \(|S|/2\), and therefore
\[ |S|/2 = \sum_k kq_k(S) < 2 q_2(S) + (1-q_2(S))|S| = |S| + (2-|S|)q_2(S);\]
the inequality is strict because $q_0(S) > 0$. Hence,
\[ q_2(S) < \frac{|S|}{2(|S|-2)} = \frac{1}{2} + \frac{1}{|S|-2}.\]
Therefore $q_2(S) < 3/4$ if $|S| \geq 6$. Assume from now on that $|S| \leq 5$.

By item 3, we may assume that for each \(i \in [n]\), \(|\{v \in S: v(i)=1\}|\) is even, and therefore \(\sum_{v\in S}v = 0\). Let \(T \subset S\) be the smallest linearly dependent subset of \(S\). Since \(|S| \leq 5\), \(T\) must be of the form \(\{x,y,x+y\}\), \(\{x,y,z,x+y+z\}\), or \(\{x,y,z,v,x+y+z+v\}\). Since \(S\) sums to zero, \(S \setminus T\) must also sum to zero, but \(|S \setminus T| \leq 2\), and no set of size 1 or 2 sums to zero. Hence, \(S \setminus T = \emptyset\), i.e. \(S=T\). One may check that
\[q_2(\{x,y,x+y\}) = q_2(\{x,y,z,x+y+z\}) = 3/4,\quad q_2(\{x,y,z,v,x+y+z+v\}) = 5/8,\]
proving 5.
\end{enumerate}
\end{proof}

We also need a counterpart of Lemma \ref{lem:trivialgraphics}:
\begin{lem} \label{lem:trivialsetics}
 Let $S$ be a set of vectors.
\begin{enumerate}
 \item We have $q_0(\emptyset) = 1$, $q_0(\{x\}) = 1/2$, and $q_0(S) \leq 1/4$ for all other sets.
 \item If $m(S) = 0$ and $|S|$ is odd, then either $q_0(S) \leq 1/16$ or $S$ is of the form
\[ \{x,y,x+y\},\ \{x,y,z,x+y,x+z\},\ \textrm{or } \{x,y,z,x+y,y+z,x+z,x+y+z\}.\]
 \item Either $J(S) = \emptyset$ or $a_0 \leq 1/4$.
\end{enumerate}
\end{lem}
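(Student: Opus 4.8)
The plan is to follow the proof of Lemma~\ref{lem:trivialgraphics} almost verbatim, with Lemma~\ref{lem:genstatistics}(1), i.e. the identity $q_0(S) = 2^{-\rank(S)}$, playing the role that Lemma~\ref{lemCutStatistics}(1) played there. Item~1 is then immediate: any two distinct vectors of $\mathbb{Z}_2^n \setminus \{0\}$ are automatically linearly independent, so $\rank(S)=0$ exactly when $S=\emptyset$, $\rank(S)=1$ exactly when $S$ is a singleton, and $\rank(S)\ge 2$ for every other $S$; substituting into $q_0(S)=2^{-\rank(S)}$ gives the values $1$, $1/2$, and $\le 1/4$ respectively.

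For item~2, note first that $q_0(S)=2^{-\rank(S)}\le 1/16$ as soon as $\rank(S)\ge 4$, so we may assume $\rank(S)\le 3$. The hypothesis $m(S)=0$ means $I(S)=\emptyset$: this excludes $\rank(S)=0$ (which forces $S=\emptyset$, of even size) and $\rank(S)=1$ (which forces $S=\{x\}$, for which $m(S)=1$), leaving $\rank(S)\in\{2,3\}$. Thus $S$ is a subset of the $2^{\rank(S)}-1\in\{3,7\}$ nonzero vectors of a subspace $V$ with $\dim V = \rank(S)$, and $S$ spans $V$. A finite case check on the odd value $|S|\in\{3,5,7\}$ then finishes the argument: if $\rank(S)=2$ then necessarily $|S|=3$ and $S=V\setminus\{0\}=\{x,y,x+y\}$; if $\rank(S)=3$ and $|S|=3$ then $S$ is a linearly independent triple, so $I(S)=S$ and $m(S)=3\ne 0$, a contradiction; if $\rank(S)=3$ and $|S|=7$ then $S=V\setminus\{0\}=\{x,y,z,x+y,y+z,x+z,x+y+z\}$; and if $\rank(S)=3$ and $|S|=5$ then $S=V\setminus\{0,u,v\}$ for two distinct nonzero $u,v\in V$. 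Since a linear automorphism of $\mathbb{Z}_2^n$ can be chosen carrying any ordered linearly independent pair to any other, all of the last sets are isomorphic, and one checks directly that the particular choice $\{u,v\}=\{y+z,\,x+y+z\}$ (so that $x=u+v$) realises $S=\{x,y,z,x+y,x+z\}$. I expect this last identification, together with the bookkeeping forced by the $m(S)=0$ hypothesis, to be the only place requiring genuine (but short) work.

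For item~3, recall from \eqref{eq:pgfexpansion} that $a_0 = q_0(J(S))$, where $J(S)=S\setminus I(S)$. The key structural observation is that $J(S)$ is never a singleton: if $v\in J(S)$ then $v\in\Span(S\setminus\{v\})$, so $v=v_{i_1}+\cdots+v_{i_k}$ for distinct $v_{i_j}\in S\setminus\{v\}$, and since $v\ne 0$ we must have $k\ge 2$; thus $\{v,v_{i_1},\dots,v_{i_k}\}$ is a linearly dependent set of size at least $3$, every one of whose members lies in $J(S)$. Hence $J(S)=\emptyset$ or $|J(S)|\ge 3$. Applying item~1 to the set $J(S)$: if $J(S)\ne\emptyset$ then $|J(S)|\ge 2$, so $a_0 = q_0(J(S))\le 1/4$, which is exactly the claim.
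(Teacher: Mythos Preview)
Your proof is correct and follows essentially the same approach as the paper, which also reduces everything to the identity $q_0(S)=2^{-\rank(S)}$ and then argues case-by-case on $\rank(S)$. You give considerably more detail than the paper does (in particular for the $|S|=5$, $\rank(S)=3$ case of item~2 and the argument that $J(S)$ cannot be a singleton in item~3), but the underlying ideas are identical.
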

\begin{proof}
 \begin{enumerate}
  \item If $S \neq \emptyset,\{x\}$ then $\rank S \geq 2$, so the item follows from Lemma~\ref{lem:genstatistics}(1).
  \item If $\rank(S) \geq 4$ then item 1 implies that $q_0(S) \leq 1/16$. If \(\rank(S) \leq 3\), the only possibilities for \(S\) are those listed.
  \item If $J(S) \neq \emptyset$ then $\rank(J(S)) \geq 2$, since there is no linear dependency in $\{x\}$, so the item follows from item 1.
 \end{enumerate}
\end{proof}
We note that in item 2, the first possibility corresponds to a triangle, and the second to a $K_4^-$. The third possibility has no graph counterpart.

As before, these lemmas enable us to prove Claim \ref{gengoodOCC1}:

\begin{proof}[of Claim \ref{gengoodOCC1}]
The proof of Claim~\ref{goodOCC1} relied on equation (\ref{eq:pgfexp}) and Lemmas~\ref{lemCutStatistics} and~\ref{lem:trivialgraphics}. In order to prove Claim~\ref{gengoodOCC1}, we replace those by equation  (\ref{eq:pgfexpansion}) and Lemmas~\ref{lem:genstatistics} and~\ref{lem:trivialsetics}. The proof goes through line-by-line if we replace $G$ with $S$, $m$ with $m(S)$ and $H$ with $J(S)$. The proof uses the unconditional estimates of Lemma~\ref{lemCutStatistics} and the conditional estimates of Lemma~\ref{lem:trivialgraphics}; these carry through in the present setting. In a few places, $f(G)$ was explicitly calculated for some graphs; those are either forests, or the exceptional graphs of Lemma~\ref{lem:trivialgraphics}(2). In the present setting, we require exactly the same explicit calculations for the corresponding sets, and there is just one other exceptional structure to deal with,
\[S=\{x,y,z,x+y,x+z,y+z,x+y+z\},\]
which has
\[Q_{S}(X)=\frac{1}{8} + \frac{7}{8}X^4.\]
In this case, we explicitly calculate $f(S) = 1/8 = 1/7 - 1/56$. This completes the proof of Claim~\ref{gengoodOCC1}.

\if 0
First, we prove the claim for subsets \(S\) with \(|S|\) odd. Write
\[f(S) = q_0(S)-\tfrac{5}{7} q_1(S) - \tfrac{1}{7}q_2(S) + \tfrac{3}{28}q_3(S).\]
We will show that if \(|S|\) is odd, then \(f(S) \leq \tfrac{1}{7}\), with equality if and only if \(S\) is of the form \(\{x\}\), \(\{x,y,z+y\}\), or \(\{x,y,z,x+y,x+z\}\), and that in all other cases, \(f(S) \leq \tfrac{1}{7}-\tfrac{1}{56}\).

By Lemma \ref{lem:genstatistics}, \(q_{1}(S) = mq_{0}(S)\), so
\begin{equation}
 \label{eq:gencoeffs}
f(S) = (1-\tfrac{5}{7}m)q_{0}(S) - \tfrac{1}{7}q_2(S) + \tfrac{3}{28}q_3(S).
\end{equation}
First suppose \(m = 1\). By Lemma \ref{lem:genstatistics} part 1, for any nonempty set \(S\), \(q_{0}(S) \leq \frac{1}{2}\), with equality if and only if \(S\) is a singleton. Hence,
\[f(S) = \tfrac{2}{7}q_{0}(S) - \tfrac{1}{7}q_2(S) + \tfrac{3}{28}q_3(S) \leq \tfrac{1}{7},\]
with equality if and only if \(S\) is a singleton. If \(|S| \geq 2\), then \(\rank(S) \geq 2\), and therefore \(q_{0}(S) \leq \tfrac{1}{4}\). By Lemma \ref{lem:genstatistics} part 5, \(q_{3}(S) \leq \tfrac{1}{2}\), and therefore
\[f(S) \leq \tfrac{2}{7}\tfrac{1}{4} + \tfrac{3}{28}\tfrac{1}{2} = \tfrac{1}{8} = \tfrac{1}{7} - \tfrac{1}{56}.\]
If \(m \geq 2\), the coefficient of \(q_{0}(S)\) in (\ref{eq:gencoeffs}) is negative, and therefore
\[f(S) < \tfrac{3}{28} = \tfrac{1}{7}-\tfrac{1}{28} < \tfrac{1}{7}-\tfrac{1}{56}.\]
Hence, from now on, we may assume that \(m=0\).

If \(\rank(S) \geq 4\), then \(q_{0}(S) \leq 1/16\), by lemma \ref{lem:genstatistics} part 1. Using \(q_{3}(S) \leq \tfrac{1}{2}\), we obtain
\[f(G) \leq \tfrac{1}{16}+\tfrac{3}{28}\tfrac{1}{2} = \tfrac{13}{112} = \tfrac{1}{7}-\tfrac{3}{112} < \tfrac{1}{7}-\tfrac{1}{56},\]
so we are done.

Hence, from now on, we may assume that \(m = 0\), \(\rank(S) \leq 3\), and \(|S|\) is odd. If \(\rank(S) \leq 3\), then \(|S| \leq 7\). If \(\rank(S)=1\), then \(S\) is a singleton. If \(\rank(S)=2\), then \(S\) is a Schur triple \(\{x,y,x+y\}\), and \(f(S) = 1/7\). If \(\rank(S)=3\), then either \(S\) is of the form \(\{x,y,z,x+y,x+z\}\), in which case \(f(S) = 1/7\), or of the form \(\{x,y,z,x+y,y+z,x+z,x+y+z\}\), in which case \(f(S) = 1/8 = 1/7-1/56\), completing the proof for \(|S|\) odd. (Note that the latter possibility has no analogue in the graph case.)

We now deal with even-sized sets. We will show that if \(|S|\) is even, then \(f(S) \geq -\tfrac{1}{7}\), with equality if and only if \(S\) is a linearly independent set of size 2 or 4, and that in all other cases, \(f(S) \geq -\tfrac{1}{7} + \tfrac{1}{28}\).

By (\ref{eq:pgfexpansion}), we have:
\begin{eqnarray*}
f(S) & = & \tfrac{1}{2^{m}}\left(a_{0}-\tfrac{5}{7}ma_{0}-\tfrac{1}{7}\left(\tbinom{m}{2}a_{0}+a_{2}\right)+\tfrac{3}{28}\left(\tbinom{m}{3}a_0+ma_{2}+a_{3}\right)\right)\\
&  = & \tfrac{1}{2^{m}}\left(\left(1-\tfrac{5}{7}m-\tfrac{1}{7}\tbinom{m}{2}+\tfrac{3}{28} \tbinom{m}{3}\right)a_{0}+(-\tfrac{1}{7}+\tfrac{3}{28}m)a_{2}+\tfrac{3}{28}a_{3}\right).
\end{eqnarray*}
We must show that \(f(S) \geq -\tfrac{1}{7}\). When \(m = 0\),
\[f(S) = a_{0}-\tfrac{1}{7}a_{2}+\tfrac{3}{28}a_{3}.\]
By lemma \ref{lem:genstatistics}, \(a_{2} \leq 3/4\), and therefore
\[f(S) > -\tfrac{1}{7}+\tfrac{1}{28}.\]
When \(m = 1\),
\[f(S) = \tfrac{1}{2}(\tfrac{2}{7}a_{0}-\tfrac{1}{28}a_{2}+\tfrac{3}{28}a_{3}) = \tfrac{1}{7}a_{0}-\tfrac{1}{56}a_{2}+\tfrac{3}{28}a_{3} > -\tfrac{3}{4} \tfrac{1}{56} = -\tfrac{1}{7}+\tfrac{29}{224} > -\tfrac{1}{7}+\tfrac{1}{28}.\]
When \(m=2\),
\[f(S) = \tfrac{1}{4}(-\tfrac{4}{7}a_{0}+\tfrac{1}{14}a_{2}+\tfrac{3}{28}a_{3}) = -\tfrac{1}{7}a_{0}+\tfrac{1}{56}a_{2}+\tfrac{3}{112}a_3.\]
We have \(f(S) = -\tfrac{1}{7}\) if and only if \(J(S) = \emptyset\), i.e. \(|S|=2\). If \(J(S) \neq \emptyset\), then \(|J(S)| \geq 2\), so \(a_{0} \leq \tfrac{1}{4}\) by lemma \ref{lemCutStatistics} part 1, and therefore
\[f(S) \geq -\tfrac{1}{28} = -\tfrac{1}{7}+\tfrac{3}{28} > -\tfrac{1}{7}+\tfrac{1}{28}.\]
When \(m = 3\),
\[f(S) = \tfrac{1}{8}(-\tfrac{41}{28}a_{0}+\tfrac{5}{28}a_{2}+\tfrac{3}{28}a_{3}) = -\tfrac{41}{224}a_{0}+\tfrac{5}{224}a_{2}+\tfrac{3}{224}a_{3}.\]
Since \(|S|\) is even, \(J(S) \neq \emptyset\), so as above, \(a_{0} \leq \tfrac{1}{4}\). It follows that
\[f(S) \geq -\tfrac{41}{896} = -\tfrac{1}{7}+\tfrac{87}{896} > -\tfrac{1}{7}+\tfrac{1}{28}.\]
When \(m = 4\),
\[f(S) = \tfrac{1}{16}(-\tfrac{16}{7}a_{0}+\tfrac{2}{7}a_{2}+\tfrac{3}{28}a_{3}) = -\tfrac{1}{7}a_{0}+\tfrac{1}{56}a_{2}+\tfrac{3}{448}a_{3}.\]
We have \(f(S) = -\tfrac{1}{7}\) if and only if \(J(S) = \empty\), i.e. \(S\) is a linearly independent set of size 4. Otherwise, \(a_{0} \leq \tfrac{1}{4}\), and therefore
\[f(S) \geq -\tfrac{1}{28} = -\tfrac{1}{7}+\tfrac{3}{28} > -\tfrac{1}{7}+\tfrac{1}{28}.\]
Finally, assume that \(m \geq 5\). Since the coefficients of \(a_{2}\) and \(a_{3}\) in \(f(S)\) are positive for \(m \geq 2\), we need only bound the coefficient of \(a_{0}\) away from \(-\tfrac{1}{7}\). Write
\[r(m) = \tfrac{1}{2^{m}}\left(1-\tfrac{5}{7}m-\tfrac{1}{7}\tbinom{m}{2}+\tfrac{3}{28} \tbinom{m}{3}\right)\]
for this coefficient. For \(m = 5\), we have
\[r(5) = -\tfrac{41}{448}.\]
Since \(|S|\) is even, \(J(S) \neq \emptyset\), and therefore \(a_{0} \leq \tfrac{1}{4}\), so
\[f(S) \geq -\tfrac{41}{448}\tfrac{1}{4} = -\tfrac{1}{7}+\tfrac{215}{1792} > -\tfrac{1}{7}+\tfrac{1}{28}.\]
For \(m = 6\), we have
\[r(6) = -\tfrac{23}{448},\]
and therefore
\[f(S) \geq -\tfrac{23}{448} = -\tfrac{1}{7}+\tfrac{41}{448}> -\tfrac{1}{7}+\tfrac{1}{28}.\]
For \(m = 7\), we have
\[r(7) = -\tfrac{13}{512}.\]
For \(m \geq 7\), the polynomial
\[1-\tfrac{5}{7}m-\tfrac{1}{7}\tbinom{m}{2}+\tfrac{3}{28} \tbinom{m}{3}\]
in the numerator of \(r\) is strictly increasing, and therefore
\[r(m) \geq -\tfrac{13}{512}\ \forall m \geq 7.\]
Hence,
\[f(S) \geq -\tfrac{13}{512} = -\tfrac{1}{7}+\tfrac{421}{3584}> -\tfrac{1}{7}+\tfrac{1}{28}\]
whenever \(m \geq 7\), completing the proof of Claim \ref{gengoodOCC1}.
\fi
\end{proof}

We have the following analogue of Claim \ref{goodOCC2}:
\begin{claim}\label{goodOCC2gen}
Let $\Lambda^{(2)}$ be the OLDC spectrum described by
$$
\lambda^{(2)}_S = (-1)^{|S|} \left[\sum q_{I}(S) - q_{\Box}(G)\right]
$$
where \(I\) denotes a linearly independent set of size 4, and $\Box$ denotes the set \(\{a,b,c,a+b+c\}\).
Then \begin{enumerate}
\item $\lambda^{(2)}_S = 0$ for all $S$ with \(|S| \leq 3\).
\item $\lambda^{(2)}_S= 1/16$ if \(S\) is a linearly independent set of size 4.
\item $\lambda^{(2)}_{\{x,y,z,x+y,x+z\}}=1/8$.
\item $|\lambda^{(2)}_S| \le 1$ for all $S$.
\end{enumerate}
\end{claim}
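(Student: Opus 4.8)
The plan is to mirror the proof of Claim~\ref{goodOCC2} item by item, using throughout the basic fact recorded at the start of Section~\ref{sectionSchurCutStatistics}: if $\{v_1,\dots,v_d\}$ is linearly independent then $\langle v_1,w\rangle,\dots,\langle v_d,w\rangle$ are independent $\textrm{Bin}(1,1/2)$ random variables, so that $S \cap A_w$ is a uniformly random subset of $S$ whenever $S$ itself is linearly independent. Items~1 and~4 are then immediate. If $|S|\le 3$ then $|S\cap A_w|\le 3$, so $S\cap A_w$ can be isomorphic neither to a linearly independent set of size $4$ nor to $\Box$; hence every term of $\lambda^{(2)}_S$ vanishes. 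For item~4, note that $\sum_I q_I(S)$ and $q_\Box(S)$ are the probabilities of two \emph{disjoint} events (that $S\cap A_w$ is a linearly independent $4$-set, respectively that it is isomorphic to $\Box$), so each lies in $[0,1]$ and their difference lies in $[-1,1]$.

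For item~2, let $S$ be linearly independent with $|S|=4$. Then the four bits $\langle v,w\rangle$ ($v\in S$) are i.i.d.\ fair coins, so $S\cap A_w$ is a uniformly random subset of $S$. Every subset of $S$ is again linearly independent, so $S\cap A_w$ is never isomorphic to the linearly dependent set $\Box$, giving $q_\Box(S)=0$; and $S\cap A_w$ is a linearly independent $4$-set precisely when $S\cap A_w=S$, which has probability $2^{-4}=1/16$. Since $|S|=4$ is even, $\lambda^{(2)}_S=1/16-0=1/16$.

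For item~3, I would write $S=\{x,y,z,x+y,x+z\}$ with $x,y,z$ linearly independent and set $a=\langle x,w\rangle$, $b=\langle y,w\rangle$, $c=\langle z,w\rangle$; these are independent fair bits, and membership of each element of $S$ in $A_w$ is a fixed linear form in $a,b,c$ (e.g.\ $x+y\in A_w$ iff $a+b=1$). Enumerating the $8$ values of $(a,b,c)$, one finds that $S\cap A_w$ has size $0$, $2$, $3$ in $1$, $2$, $4$ of the cases respectively, while the single remaining case $(a,b,c)=(0,1,1)$ yields $S\cap A_w=\{y,z,x+y,x+z\}$, which sums to zero and has no proper linear dependency, hence is isomorphic to $\Box$. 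Thus $q_\Box(S)=1/8$, no outcome produces a linearly independent $4$-set, and since $|S|=5$ we get $\lambda^{(2)}_S=(-1)(0-1/8)=1/8$. There is no genuine obstacle here; the only step that needs care is the bookkeeping in this enumeration — checking that the size-$4$ cut arising in the case $(a,b,c)=(0,1,1)$ really has isomorphism type $\Box$ (equivalently, that its unique linear dependency is the sum of all four vectors), and confirming that none of the eight outcomes yields a linearly independent $4$-set.
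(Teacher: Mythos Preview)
Your proof is correct and follows essentially the same approach as the paper. The only minor difference is in item~3: the paper argues directly that $\rank(S)=3$ so no $4$-subset of $S$ can be linearly independent, and identifies $\{y,z,x+y,x+z\}$ as the unique subset isomorphic to $\Box$, whereas you carry out the full eight-case enumeration to reach the same conclusion.
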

\begin{proof}
We follow the items of the claim:
\begin{enumerate}
\item Clear.
\item For any linearly independent sets \(I,I'\) of size 4, \(q_{I}(I') = q_{4}(I') = 1/16\). Also $q_\Box(I')=0$.
\item Let \(S = \{x,y,z,x+y,x+z\}\). Clearly, $q_I(S) =0$ for any linearly independent set \(I\) of size 4. Note that the only subset of \(S\) isomorphic to \(\{a,b,c,a+b+c\}\) is \(\{y,z,x+z,x+z\}\), and therefore \(q_{\Box}(S) = 1/8\), proving 3.
\item Finally, $|\lambda^{(2)}(S)|$ is the difference between
    two probabilities, hence is at most 1.
\end{enumerate}
\end{proof}

Exactly the same argument as before now shows that if \(\mathcal{F}\) is an odd-LD-agreeing family of subsets of \(\{0,1\}^n \setminus \{0\}\), then \(\mu(\mathcal{F}) \leq 1/8\). If \(\mathcal{F}\) is odd-LD-intersecting, we may deduce from Lemma \ref{lem:ehud} that equality holds only if \(\mathcal{F}\) consists of all families of subsets containing a fixed Schur triple \(\{x,y,x+y\}\). If \(\mathcal{F}\) is an odd-LD-agreeing family of subsets of \(\{0,1\}^n \setminus \{0\}\), we may deduce using the same monotonization argument as in Lemma \ref{agree2intersect} that equality holds only if \(\mathcal{F}\) is a Schur junta. Stability follows by the same argument as before.

\subsection{\texorpdfstring{$p < 1/2$}{p < 1/2}} \label{sectionSchurSkew}
We now outline briefly how the skew-measure analogue, Theorem \ref{skewSchur}, can be proved using the technique of section~\ref{secSmallp}. This time the proof of the main claim, Claim~\ref{goodOCC1:p}, relies also on Lemma~\ref{lem:trivialgraphics:p}. It is easy to extend this lemma to the current setting:
\begin{lem} \label{lem:trivialsetics:p}
 Let $S$ be a set of vectors.
\begin{enumerate}
 \item If $m(S) = 1$ and $|S|>1$ then $|S| \geq 4$.
 \item If $m(S) = 0$ and $|S| \leq 5$, then \(S\) is of the form
\[ \{x,y,x+y\}, \{x,y,z,x+y+z\}, \{x,y,z,w,x+y+z+w\}, \textrm{or }\{x,y,z,x+y,x+z\}.\]
\end{enumerate}
\end{lem}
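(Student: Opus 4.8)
The plan is to re-run the proof of Lemma~\ref{lem:trivialgraphics:p} almost verbatim, translating its graph vocabulary into matroid language over \(\mathbb{Z}_2\): a bridge becomes an element of \(I(S)\) (a coloop of the vector matroid restricted to \(S\)), and a biconnected block becomes a circuit, i.e.\ a minimal linearly dependent subset. The fact that powers everything is that the vector matroid on \(\mathbb{Z}_2^n \setminus \{0\}\) is \emph{simple}: it has no loops (the zero vector is excluded) and no parallel pairs (over \(\mathbb{Z}_2\), distinct nonzero vectors are never scalar multiples of one another), so every circuit has size at least \(3\). Recall also that \(v \in I(S)\) iff \(v \notin \Span(S\setminus\{v\})\) iff \(\rank(S\setminus\{v\}) < \rank(S)\) iff \(v\) lies in no circuit contained in \(S\); hence \(J(S) = S \setminus I(S)\) is exactly the union of the circuits of \(S\). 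Part~1 is then immediate: if \(m(S) = 1\) and \(|S| > 1\) then \(J(S)\) is nonempty, so it contains a circuit \(C\), and \(|C| \ge 3\) by simplicity; as \(C \subseteq J(S)\) we get \(|S| = m(S) + |J(S)| \ge 1 + 3 = 4\).

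For Part~2, where \(m(S) = 0\) means \(S\) is the union of its circuits, I split by \(|S|\). If \(|S| \le 2\) then \(S\) is independent and every element is a coloop, which is impossible. If \(|S| = 3\): either \(S\) is independent, so \(m(S) = 3\), or \(S\) is itself a \(3\)-circuit, which by simplicity has the form \(\{x, y, x+y\}\). If \(|S| = 4\): four distinct nonzero vectors cannot lie in a rank-\(2\) flat (which has only \(3\) nonzero points), so \(\rank(S) \ge 3\); if \(\rank(S) = 4\) then \(m(S) = 4\); if \(\rank(S) = 3\) then \(|S|-\rank(S) = 1\), so the cycle space of \(S\) is one-dimensional and \(S\) has a unique circuit \(C\), which must be all of \(S\) since \(S\) has no coloop, so \(S\) is a \(4\)-circuit and, choosing any independent triple \(x,y,z\) of its elements, the fourth is forced to be \(x+y+z\). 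If \(|S| = 5\): again \(\rank(S) \ge 3\); \(\rank(S) = 5\) gives \(m(S) = 5\); if \(\rank(S) = 4\) then \(S\) again has a unique circuit, necessarily equal to \(S\) (otherwise the elements outside it are coloops), so \(S\) is a \(5\)-circuit and, choosing an independent quadruple \(x,y,z,w\) of its elements, the fifth is \(x+y+z+w\).

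The genuinely harder case, and the main obstacle, is \(\rank(S) = 3\) with \(|S| = 5\): then (after identifying \(\Span(S)\) with \(\mathbb{Z}_2^3\)) \(S\) is a \(5\)-point subset of the Fano plane \(\mathbb{Z}_2^3 \setminus \{0\}\). Here \(m(S) = 0\) holds automatically, because deleting any one point leaves four distinct points, which cannot all lie on a line and hence still span \(\mathbb{Z}_2^3\); so no point is a coloop. It remains to identify the isomorphism type. The plan is to note that \(S\) is the complement in \(\mathbb{Z}_2^3\setminus\{0\}\) of a pair \(\{p,q\}\) of distinct nonzero vectors; that \(GL(3,2)\) acts transitively on such pairs, since any two distinct nonzero vectors of \(\mathbb{Z}_2^3\) form an independent set and \(GL(3,2)\) is transitive on ordered independent pairs; and that \(\{x,y,z,x+y,x+z\}\) is itself the complement of the pair \(\{y+z,\;x+y+z\}\). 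Transitivity then shows every such \(S\) is linearly isomorphic to \(\{x,y,z,x+y,x+z\}\), completing the case analysis. (If one prefers to avoid invoking \(GL(3,2)\), the required change of basis can be written out directly: set \(x = p+q\), pick \(z \notin \Span(p,q)\), and put \(y = z + p\).) Everything outside this one case is a mechanical transcription of the bridge/block bookkeeping of Lemma~\ref{lem:trivialgraphics:p}, using only that \(\mathbb{Z}_2\)-circuits have at least three elements.
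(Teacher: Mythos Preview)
Your proof is correct and follows essentially the same enumeration strategy as the paper, which dispatches the lemma in two lines (``the smallest linear dependency is $\{x,y,x+y\}$'' and ``easy enumeration''). Your version is considerably more explicit, recasting the argument in matroid language and, in the $|S|=5$, $\rank(S)=3$ case, invoking transitivity of $GL(3,2)$ on independent pairs rather than checking isomorphisms by hand; this is a clean way to see that all five-point subsets of the Fano plane are of the required form, where the paper simply remarks that $\{x,y,z,x+y,x+y+z\}$ is isomorphic to $\{x,y,z,x+y,x+z\}$.
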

\begin{proof}
\begin{enumerate}
 \item The smallest linear dependency is $\{x,y,x+y\}$.
 \item Easy enumeration. Note that $\{x,y,z,x+y,x+y+z\}$ is isomorphic to the last member in the list.
\end{enumerate}
\end{proof}
All sets in item~(2) correspond to graphs: a triangle, $C_4$, $C_5$, and $K_4^-$, respectively. Therefore the only new case to check is the extra case in Lemma~\ref{lem:trivialsetics}(2). In this case also, we have $\lambda_G > -p^3/(1-p^3)$, and so Claim~\ref{goodOCC1:p} remains true in the current setting. This leads to a proof of Theorem \ref{skewSchur}.

\section{Discussion}\label{secDiscussion}
There are many intriguing generalizations of the problems discussed in this paper. We mention a few of them below,
and state several conjectures.
\subsection{Cross-triangle-intersecting families}
Many, or perhaps most, of the interesting theorems about intersecting families can be generalized to cross-intersecting families. We say that two families of graphs, $\cF$ and $\G$, are {\em cross-triangle-intersecting} if for
every $F \in \cF$ and $G \in \G$ the intersection $F \cap G$ contains a triangle. A natural conjecture is the following:
\begin{conj}
Let $\cF$ and $\G$ be cross-triangle-intersecting families of graphs on the same set of $n$ vertices.
Then $\mu(\cF)\mu(\G) \leq (1/8)^2$. Equality holds if and only if $\cF = \G$ is a $\trium$.
\end{conj}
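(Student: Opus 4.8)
The natural line of attack is the cross-intersecting version of the weighted Hoffman bound, run in parallel with the proof of Corollary~\ref{corMain}. The starting point is the cross-analogue of \eqref{GplusH}: if $\cF$ and $\G$ are cross-triangle-intersecting (indeed, cross-odd-cycle-intersecting) and $B$ is bipartite, then $F\in\cF$ forces $F\oplus\barB\notin\G$, because $F\cap(F\oplus\barB)=F\cap B\subseteq B$ is bipartite and hence contains no triangle. Writing $f=\mathbf 1_\cF$, $g=\mathbf 1_\G$, this says $f\cdot A_\B g\equiv 0$ for every distribution $\B$ over bipartite graphs, hence $\ip{f,Ag}=0$ for every OCC operator $A$ obtained as a linear combination of the $A_\B$. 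Expanding both sides in the Fourier--Walsh basis and using $\lambda_\emptyset=1$ gives
\[
\mu(\cF)\mu(\G)=-\sum_{G\neq\emptyset}\lambda_G\,\widehat{\cF}(G)\,\widehat{\G}(G).
\]

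Now suppose one has an OCC spectrum $\Lambda$ with $\lambda_\emptyset=1$ and, crucially, a \emph{two-sided} bound $|\lambda_G|\le 1/7$ for all $G\neq\emptyset$, with equality exactly when $G$ is a subgraph of a fixed triangle (or two disjoint edges), and with a uniform spectral gap $|\lambda_G|\le 1/7-\gamma$ for all larger $G$. Writing $a=\mu(\cF)$, $b=\mu(\G)$ and using $\sum_{G\neq\emptyset}\widehat{\cF}(G)^2=a-a^2$ together with Cauchy--Schwarz,
\[
ab\le\tfrac17\sum_{G\neq\emptyset}|\widehat{\cF}(G)|\,|\widehat{\G}(G)|\le\tfrac17\sqrt{(a-a^2)(b-b^2)}.
\]
Squaring and simplifying yields $48ab+(a+b)\le 1$; since $a+b\ge 2\sqrt{ab}$, setting $t=\sqrt{ab}$ gives $48t^2+2t-1\le 0$, i.e. $t\le 1/8$, which is precisely $\mu(\cF)\mu(\G)\le (1/8)^2$. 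Equality forces $a=b=1/8$, tightness in Cauchy--Schwarz, and $\widehat{\cF}(G)\widehat{\G}(G)=0$ whenever $|\lambda_G|<1/7$; in particular $\widehat{\cF}$ and $\widehat{\G}$ are both supported on subgraphs of one common triangle.

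The main obstacle is constructing an OCC spectrum satisfying this two-sided bound. The spectrum $\Lambda=\Lambda^{(1)}+\tfrac{16}{17}\gamma'\Lambda^{(2)}$ of Corollary~\ref{cor:OCCmain} has $\lambda_G\ge -1/7$ everywhere, but \emph{fails} the upper bound: for a $3$-edge forest $F$ one computes $\lambda^{(1)}_F=\tfrac{41}{224}>\tfrac17$. I would correct this by adding a further legitimate OCC spectrum (via Corollary~\ref{cor:qB}): let $\lambda^{(3)}_G=(-1)^{|G|}\Pr[\,G\cap B\text{ is a }3\text{-edge forest}\,]$. This vanishes on every subgraph of a triangle and on two disjoint edges (whose cuts have at most two edges), and equals $-\tfrac18$ on every $3$-forest. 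Then $\Lambda=\Lambda^{(1)}+c_2\Lambda^{(2)}+c_3\Lambda^{(3)}$ with suitably chosen small $c_2,c_3>0$ keeps the subgraphs of the triangle (and two disjoint edges) at exactly $\pm 1/7$, pulls the $3$-forests, $4$-forests and $K_4^-$ strictly inside $(-1/7,1/7)$, and — this is where the work lies — must be checked to keep \emph{every} other non-empty graph in $[-1/7,1/7]$ with a uniform gap. That verification is a finite-plus-asymptotic case analysis of exactly the flavour of the proof of Claim~\ref{goodOCC1}, now carried out for \emph{both} inequalities $f(G)\ge-\tfrac17$ and $f(G)\le\tfrac17$ (the paper establishes only one direction on each parity class), using Lemmas~\ref{lemCutStatistics} and~\ref{lem:trivialgraphics} and the block-decomposition identity \eqref{eq:pgfexp}. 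I expect this to be the bulk of the effort, and the point where a couple of additional $q_R$-type correction terms may turn out to be unavoidable.

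Finally, the equality case. If $\mu(\cF)\mu(\G)=(1/8)^2$, replacing $\cF,\G$ by their up-filters (which preserves cross-triangle-intersection and can only increase measures, hence by the bound just proved must leave them unchanged) shows $\cF$ and $\G$ are already up-sets; the eigenvalue analysis above pins $\widehat{\cF}$ and $\widehat{\G}$ on subgraphs of a triangle, so Lemma~\ref{lem:ehud} with $p=1/2$, $t=3$ forces each of $\cF,\G$ to be a $3$-umvirate. Testing cross-triangle-intersection against the single graph consisting of the three defining edges of $\G$ shows those three edges form a triangle, so $\G$ is a $\trium$ (say for $T_2$), and likewise $\cF$ for some $T_1$; testing $T_1$ against $T_2$ as edge sets forces $T_1=T_2$, whence $\cF=\G$ is that $\trium$. (For families that are not up-sets the reduction is immediate here since cross-intersection is inherited by up-filters, so Lemma~\ref{agree2intersect} is not needed.) A matching stability statement should follow by feeding the resulting bound on $w=\sum_{G\notin\Lambda_{\min}\cup\{\emptyset\}}\widehat{\cF}(G)^2$ into the Kindler--Safra theorem exactly as in the proof of Corollary~\ref{corMain}.
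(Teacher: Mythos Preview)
The statement you are attempting to prove is a \emph{Conjecture} in the paper (Section~\ref{secDiscussion}), not a theorem; there is no proof in the paper to compare against. The paper explicitly leaves this open, and its discussion of the obstacle matches yours almost verbatim: the cross-intersecting Hoffman argument requires that $\lambda_{\min}$ also be the second largest eigenvalue in absolute value, and the spectrum of Corollary~\ref{cor:OCCmain} fails this precisely at the $3$-forests, where $\lambda^{(1)}_{F_3}=41/224>1/7$. So your framework, your Cauchy--Schwarz reduction $48ab+(a+b)\le 1\Rightarrow\sqrt{ab}\le 1/8$, and your identification of the bottleneck are all exactly what the paper anticipates.

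The genuine gap is the one you yourself flag: you have not constructed an OCC spectrum with the required two-sided bound $|\lambda_G|\le 1/7$ for all $G\neq\emptyset$. Your proposed correction $\Lambda^{(3)}$ is a legitimate OCC spectrum and does pull the $3$-forests down, but it has large side-effects. For instance, every $3$-edge cut of $K_4^-$ is a $3$-forest (one checks all four such cuts are paths or stars), so $\lambda^{(3)}_{K_4^-}=(-1)^5 q_3(K_4^-)=-1/2$. To move the $3$-forests from $41/224$ into $(-1/7,1/7)$ you need $c_3>9/28$, but then $c_3\Lambda^{(3)}$ subtracts more than $9/56$ from $\lambda_{K_4^-}$, which currently sits only $O(\gamma')$ above $-1/7$, driving it well below $-1/7$. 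So $\Lambda^{(3)}$ alone cannot work; further correction terms are genuinely needed, and the interaction between them is exactly the ``substantially more involved'' calculation the paper warns about. Your proposal is a well-informed strategy, but as written it is not a proof, and the conjecture remains open.
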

The standard technique of extending spectral proofs {\em \`{a} la} Hoffman from the intersecting case to
the cross-intersecting case requires only one small additional piece of data. The minimal eigenvalue, $\lambda_{\min}$, must also be the second largest in absolute value. In our case, the OCC spectrum we have tailored does not have this property: the 3-forests have eigenvalue $41/224$, which is greater than $1/7$. It seems plausible that by using more of the \(q_{R}\)'s, one can construct an OCC spectrum with the required property, but we believe that the calculations required will be substantially more involved.
\subsection{\texorpdfstring{$p > 1/2$}{p > 1/2}}
As we explained, our preliminary construction of an OCC spectrum of the form
$$\lambda_G = (-1)^{|G|} \sum_{i \geq 0} c_i q_i(G) $$
enjoyed a certain amount of luck, since the upper and lower bounds that were imposed on  $4c_3+c_4$ by the 4-forests and by $K_4^-$ coincided. For $p > 1/2$, our luck runs out, as the bounds contradict each other, and a more sophisticated construction
is required. So far we have not been able to fix this flaw, but we see no theoretical barrier that rules out
a spectral proof of our main theorem for all $p \le 3/4$. Indeed, we conjecture that Theorem \ref{main} holds
for all $p \leq 3/4$. Easy homework for the reader: why does the theorem fail for $p >3/4$?
(Hint: Mantel's theorem.)
\subsection{Other intersecting families}
The definition of an odd-cycle-intersecting family of graphs is clearly a special case of the definition of a $\G$-intersecting family for any family of graphs $\G$.
\begin{dfn}
For a family of graphs $\G$, let
$$
m(\G) = \sup_n \{ \max \mu(\cF)\ :\ \cF \ \mbox{is a}\ \G\mbox{-intersecting family of graphs on } n\ \mbox{vertices}\}.
$$
For a fixed graph, $G$ we abbreviate $m(\{G\})$ to $m(G)$.
We also will refer to $m_p(\G)$ when the measure in question is the skew product measure with parameter $p$.
\end{dfn}
Here is a sample of known facts and questions concerning $m(\G)$ for various choices of $\G$.
\begin{itemize}
\item
It was observed by Noga Alon \cite{Alon} that for every star forest $G$, $m(G)=1/2$. He further
conjectured that there is an $\epsilon>0$ so that for every $G$ which is not a
star forest $m(G)<1/2-\epsilon$,  and pointed out that this
holds for all non-bipartite graphs $G$ and that it suffices
to prove the conjecture for $P_3$, the path with 3 edges.

 An intriguing fact is that the simplest guess, $m(P_3) = 1/8$ 
 (conjectured in \cite{cgfs}), is false. 
 Demetres Christofides \cite{christofides} has constructed a \(P_3\)-intersecting family of graphs on 6 vertices, 
 with measure \(17/128 > 1/8\).
\item The obvious conjecture generalizing our main theorem is that if $\G_k$ denotes the
family of non-$k$-colorable graphs, then $m_p(\G_k) = p^{\C{k+1}{2}}$ for all $p \le \frac{2k-1}{2k}$,
with equality only for $K_{k+1}$-umvirates. It is quite plausible that, at least for small values of $k$ and $p= 1/2$, this conjecture will be amenable
to our methods.
\item It seems to us, perhaps for lack of imagination, that the $\trium$ might be extremal not only for
odd-cycle-intersecting families, but also for the more general case of cycle-intersecting families.
If true, this would hold only for $p \le 1/2$. An indication that this may be a significantly harder question
is the fact that for $p=1/2$ there is a neck-to-neck race for maximality between the $\trium$ and
the family of all graphs with at least $\tfrac{1}{2} \C{n}{2}+ \frac{1}{2}n$ edges, and to settle the result one needs
to consult the table of the normal distribution. Moreover, the generalization of this statement to non-uniform hypergraphs is false. A cycle-intersecting family of graphs corresponds to a linear-dependency-intersecting family of subsets of \(\mathbb{Z}_{2}^{n} \setminus \{0\}\), but it is easy to construct such a family with measure \(1/2 - o(1)\). (Take all sets of vectors with cardinality at least \(2^{n-1}+(n+1)/2\). The intersection of any two is a set of at least \(n+1\) vectors, and is therefore linearly dependent. Standard estimates show that this family has measure \(1/2 - o(1)\).)
\end{itemize}

There are many other interesting structures (other than a graph structure) that one may impose on the ground set. 
An example studied in \cite{cgfs} is the cyclic group \(\mathbb{Z}_{n}\) of integers modulo \(n\).
 For \(B \subset \mathbb{Z}_{n}\), we say that a family \(\mathcal{F}\) of subsets of \(\mathbb{Z}_{n}\) is 
 \(B\){\em -translate-intersecting} if the intersection of any two sets in \(\mathcal{F}\) contains a 
 translate of \(B\). The authors conjecture that a \(B\)-translate-intersecting family of subsets of 
 \(\mathbb{Z}_{n}\) has size at most \(2^{n-|B|}\). They prove this in the case where \(B\) is an interval; 
 Paul Russell~\cite{russell} has given a different, algebraic proof. 
 F\"uredi, Griggs, Holzman and Kleitman~\cite{furedigriggs} have proved it in the case \(|B|=3\). 
 Griggs and Walker~\cite{Griggs198990} prove that for each $B$, 
 the conjecture holds for infinitely many values of $n$.
  For most configurations \(B\), the question (for all $n$) remains open.

\subsection{Connection to entropy?}
\label{subsec:entropyconnection}
The OCC spectrum that we constructed can be expressed in the form $\sum c_{\B} A_{\B}$, where the sum of the coefficients $c_{\B}$ is 1. However, this is not a convex combination, as some of the coefficients are negative.
We observe that if we restrict ourselves to non-negative coefficients, our proof method cannot give a bound better than 1/4.
At the risk of falling prey to mundane numerology, we cannot help but wonder if there is a connection to the
bound of 1/4 that one gets using entropy. We raise this question due to the other superficial resemblances
between our approach and that of \cite{cgfs}.
\vspace{5mm}\newline\noindent
{\bf \large Acknowledgements: } We would like to thank Vera S\'{o}s and Noga Alon for useful conversations.

\bibliographystyle{plain}
\bibliography{K3agree}
\end{document}